\newcommand{\x}{\times}
\newcommand{\bit}{\begin{itemize}}
\newcommand{\eit}{\end{itemize}}
\newcommand{\ben}{\begin{enumerate}}
\newcommand{\een}{\end{enumerate}}
\newcommand{\beq}{\begin{equation}}
\newcommand{\eeq}{\end{equation}}
\newcommand{\bea}{\begin{eqnarray*}}
\newcommand{\eea}{\end{eqnarray*}}
\newcommand{\bpf}{\begin{proof}}
\newcommand{\epf}{\end{proof}\ms}
\newcommand{\bmt}{\begin{bmatrix}}
\newcommand{\emt}{\end{bmatrix}}
\newcommand{\ms}{\medskip}
\newcommand{\OL}{\overline}
\newcommand{\noi}{\noindent}
\newcommand{\wh}{\widehat}
\newcommand{\eps}{\varepsilon}
\definecolor{red}{rgb}{1,0,0}
\definecolor{green}{rgb}{0,.7,0}
\definecolor{blue}{rgb}{0,0,1}
\newcommand{\ba}{\mathbf a}
\newcommand{\bb}{\mathbf b}
\newtheorem{theorem}{Theorem}[section]
\newtheorem{lemma}[theorem]{Lemma} 
\newtheorem{proposition}[theorem]{Proposition}
\newtheorem{corollary}[theorem]{Corollary}
\newtheorem{observation}[theorem]{Observation}
\newtheorem{question}[theorem]{Question}
\newtheorem{conj}[theorem]{Conjecture}
\theoremstyle{definition}
\newtheorem{definition}[theorem]{Definition}
\newtheorem{remark}[theorem]{Remark}
\newtheorem{example}[theorem]{Example}
\newenvironment{thm}{\begin{theorem}}{\end{theorem}}
\newenvironment{lem}{\begin{lemma}}{\end{lemma}}
\newenvironment{prop}{\begin{proposition}}{\end{proposition}}
\newenvironment{cor}{\begin{corollary}}{\end{corollary}}
\newenvironment{obs}{\begin{observation}}{\end{observation}}
\newenvironment{defn}{\begin{definition}}{\end{definition}}
\newenvironment{rem}{\begin{remark}}{\end{remark}}
\newenvironment{ex}{\begin{example}}{\end{example}}
\numberwithin{figure}{section}  
\numberwithin{table}{section}   
\newcommand{\sP}{\mathcal{P}}
\newcommand{\sQ}{\mathcal{Q}}
\newcommand{\sA}{\mathcal{A}}
\newcommand{\sB}{\mathcal{B}}
\newcommand{\sC}{\mathcal{C}}
\newcommand{\plus}{\raisebox{.4\height}{\scalebox{0.6}{$\bm{+}$}}}
\newcommand{\minus}{\raisebox{.2\height}{\scalebox{0.6}{$\bm{-}$}}}
\newcommand{\st}{\raisebox{.1\height}{\scalebox{0.7}{$\bm{*}$}}}
\newcommand{\Ap}{{\tt A}^{\plus}}
\newcommand{\Am}{{\tt A}^{\minus}}
\newcommand{\Ab}{{\tt A}^{\st}}
\newcommand{\Aa}{{\tt A}}
\newcommand{\Sa}{{\tt S}}
\newcommand{\Sp}{{\tt S}^{\plus}}
\newcommand{\Sm}{{\tt S}^{\minus}}
\newcommand{\Sb}{{\tt S}^{\st}}
\newcommand{\NN}{{\tt N}}
\newcommand{\XX}{{\tt X}}
\newcommand{\SEPR}{\operatorname{SEPR}}
\newcommand{\diag}{\operatorname{diag}}
\newcommand{\trans}{^\top}
\newcommand{\dunion}{\mathbin{\dot{\cup}}}
\newcommand{\negg}{\operatorname{neg}}
\newcommand{\Pl}{\stackrel{\circ}{P}}
\newcommand{\Sl}{\stackrel{\circ}{S}}
\newcommand{\Kl}{\stackrel{\circ}{K}}
\newcommand{\oo}{\circ\circ}
\newcommand{\Pll}{\stackrel{\oo}{P}}
\DeclareMathOperator{\sgn}{sgn}
\newcommand{\D}{\Gamma}
\DeclareMathOperator{\sepr}{sepr}
\DeclareMathOperator{\match}{match}
\def\mtx#1{\begin{bmatrix} #1 \end{bmatrix}}
\title{The sepr-sets of sign patterns}
\author{Leslie
Hogben\thanks{Department of Mathematics, Iowa State University,
Ames, IA 50011, USA (hogben@iastate.edu) and American Institute of Mathematics, 600 E. Brokaw Road, San Jose, CA 95112, USA
(hogben@aimath.org).}
\and Jephian C.-H. Lin\thanks{Department of
Mathematics and Statistics, University of Victoria, Victoria, BC,
V8W 2Y2, Canada (jephianlin@gmail.com).}
\and D.~D.~Olesky\thanks{Department of Computer Science, University of Victoria, BC,
V8W 2Y2, Canada (dolesky@cs.uvic.ca).}
\and P. van den Driessche\thanks{Department of
Mathematics and Statistics, University of Victoria, Victoria, BC,
V8W 2Y2, Canada (pvdd@math.uvic.ca).  Research supported in part by
an NSERC  Discovery grant.}}
\begin{document}
\maketitle

\begin{abstract}
Given a real symmetric $n\times n$ matrix, the sepr-sequence $t_1\cdots t_n$ records information about the existence of principal minors of each order that are positive, negative, or zero. This paper extends the notion of the sepr-sequence to matrices whose entries are of prescribed signs, that is, to sign patterns. A sufficient condition is given for a sign pattern to have a unique sepr-sequence, and it is conjectured to be necessary. The sepr-sequences of sign semi-stable patterns are shown to be well-structured; in some special circumstances, the sepr-sequence is enough to guarantee the sign pattern being sign semi-stable. In alignment with previous work on symmetric matrices, the sepr-sequences for sign patterns realized by  symmetric nonnegative matrices of orders two and three are characterized.  

\medskip
\noindent{\bf Keywords:} Signed enhanced principal rank characteristic sequence, sign pattern, sign semi-stable pattern, principal minor, digraph

\medskip
\noindent{\bf AMS subject classification:} 15B35, 15A15, 15B48, 05C50
\end{abstract}


\section{Introduction}

There are numerous problems where
the principal minors of a matrix, or their signs, provide important information; examples include determining whether a matrix is a $P$-matrix (all the principal minors are positive), and in the solvability of the inverse multiplicative eigenvalue problem \cite{F75}
(see \cite{GT06} for further examples).
 The {principal minor assignment problem} \cite{HS02}   asks:  Given the values for each principal minor, is there a symmetric matrix whose principal minors agree with the corresponding given values?  Oeding  \cite{O11} solved the principal minor assignment problem  for complex symmetric matrices, but the original question for real symmetric matrices remains  open.

To provide partial answers to problems requiring information about nonsingularity of principal submatrices, the {principal rank characteristic sequence (pr-sequence)} of a symmetric matrix was introduced in \cite{BDOvdD12}; it describes the existence or nonexistence of nonzero minors of the matrix.  Subsequently more refined forms were introduced, including  the {enhanced principal rank characteristic sequence  (epr-sequence)} \cite{BCFHHvdDY16} and the {signed enhanced principal rank characteristic sequence (sepr-sequence)} introduced by Mart\'inez-Rivera in \cite{Xavier17}.   As noted in \cite{GT06}, qualitative information, such as the signs of the principal minors, is sufficient for some applications 
 (and sometimes that is the only information available), making the sepr-sequence particularly valuable. 

All of the definitions of the pr-, epr-, and sepr-sequences have been stated only for symmetric (or complex Hermitian) matrices.  However, the same definitions apply naturally to real square matrices that are not necessarily symmetric. In the next definition we formally state the definition of  the sepr-sequence for this larger class of matrices. 

\begin{defn}
Let $B$ be an $n\times n$ real matrix.  The \emph{signed enhanced principal rank characteristic sequence} (sepr-sequence) of $B$ is a sequence $\sepr(B)=t_1t_2\cdots t_n$, where $t_k$ is one of $\Ab$, $\Ap$, $\Am$, $\NN$, $\Sb$, $\Sp$, or $\Sm$  based on the following criteria: For each  $k=1,\ldots, n$, $t_k=\Ab$ if $B$ has both a positive
and a negative order-$k$ principal minor, and each order-$k$ principal minor is nonzero; $t_k = \Ap$ (respectively, $t_k = \Am$) if each order-$k$ principal minor is positive (respectively, negative); $t_k = \NN$ if each order-$k$ principal minor is zero; $t_k = \Sb$ if $B$ has each a positive, a negative, and a zero order-$k$ principal minor; $t_k = \Sp$ (respectively, $t_k = \Sm$) if $B$ has both a zero and a nonzero order-$k$ principal minor, and each nonzero order-$k$ principal minor is positive (respectively, negative).
The $k$-th term of $\sepr(B)$ is also  denoted by $t_k(B)$.
\end{defn}

 Given that  the sepr-sequence of a matrix summarizes information about signs of principal minors, it seems natural to study what happens when we know the signs of the entries of the matrix but not their actual values.  
 A \emph{sign pattern} is a matrix whose entries are the \emph{signs} $\{+,-,0\}$.  The \emph{qualitative class} $Q(\sP)$ of a sign pattern $\sP=\begin{bmatrix}p_{ij}\end{bmatrix}$ consists of all  real matrices $B=\begin{bmatrix}b_{ij}\end{bmatrix}$ with the same dimensions as $\sP$ such that the sign of $b_{ij}$ is $p_{ij}$ for each $i$ and $j$.  For an $n\x n$ sign pattern $\sP$, $n$ is called the {\em order} of $\sP$ and is denoted by  $n(\sP)$.

The study of sign patterns arises from the study of  dynamical systems that appear in economics and biology, in the sense that the linearization of an equilibrium is often described by a sign pattern but the precise value of each entry is often unknown (see, for example, Sections 1.1 and 10.1 in \cite{BS95}).  Many sign patterns that arise in such applications are not symmetric.   Efforts have been made to characterize sign patterns $\sP$ that guarantee certain spectral or determinantal properties, such as sign stable patterns (every matrix in $Q(\sP)$ has all  eigenvalues in the left-half of the complex plane) and sign nonsingular patterns (every matrix in the qualitative class is nonsingular); see, e.g., \cite{BS95} and the references therein.  The principal minors of a matrix are a generalization of the determinant and also capture the spectrum through the characteristic polynomial.  We  study the possible signs of principal minors of  sign patterns with the help of  sepr-sequences.  

\begin{defn}
Let $\sP$ be a square sign pattern.  The \emph{sepr-set} of $\sP$, denoted by $\SEPR(\sP)$, is the set of sepr-sequences of matrices in $Q(\sP)$.  A sign pattern $\sP$ has a {\em unique sepr-sequence} if $\SEPR(\sP)$ has exactly one  element; in this case we denote the unique element of $\SEPR(\sP)$ by $\sepr(\sP)$.
\end{defn}

  In Section \ref{s:unique} we establish sufficient conditions for a sign pattern to have a unique sepr-sequence and conjecture these conditions are also necessary; necessity is established in some special cases. We also prove certain conditions are necessary  for an sepr-sequence  to be the unique sepr-sequence of a sign pattern.  We establish properties of sepr-sequences of a sign semi-stable pattern and determine exactly the sepr-sequences attainable by irreducible sign semi-stable patterns with all diagonal entries zero. Section \ref{s:nnsym} contains results about sepr-sequences of symmetric nonnegative matrices and symmetric nonnegative sign patterns, including a determination of all sepr-sequences attainable by symmetric nonnegative matrices of order at most three, and results about symmetric nonnegative sign patterns with unique sepr-sequences.
The remainder of this introduction contains additional definitions and notation, and Section \ref{s:basic} describes basic properties of sepr-sequences of nonsymmetric matrices and of sign patterns, including a description of how to compute sepr-sequences of reducible matrices and sign patterns.  Section \ref{s:conclude} has concluding remarks, including directions for future research.

Multiplication and addition of real numbers naturally induce a multiplication and  addition of signs.  That is, we make the obvious conventions that $+\cdot+=+=-\cdot-$, $+\cdot-=-=-\cdot+$, 0 times anything is 0, and   $s+s=s=s+0$ for $s\in\{+,-,0\}$.  However, any formula  with addition of $+$ and $-$ involved is called \emph{ambiguous}.  For an $n\times n$ sign pattern $\sP=\begin{bmatrix} p_{ij}\end{bmatrix}$, the determinant is defined by the \emph{standard expression}
\[\det\sP=\sum_{\sigma}\operatorname{sgn}(\sigma)\prod_{i=1}^n p_{i\sigma(i)},\]
where the sum is one of $+, -, 0$, or ambiguous,  $\sigma$ runs through all permutations on $\{1,\ldots,n\}$, and $\operatorname{sgn}(\sigma)$ is the sign of the permutation.  For example, $\det\begin{bmatrix}+&+\\+&-\end{bmatrix}=-$ and $\det\begin{bmatrix}+&+\\-&-\end{bmatrix}$ is ambiguous.  

{An $n\times n$ sign pattern $\sP$ has a \emph{signed determinant} if the determinants of all matrices in $Q(\sP)$  have the same sign.  It is known (see, e.g., \cite[Lemma 1.2.4]{BS95})
that a sign pattern has a signed determinant if and only if $\det\sP$ is one of $+$, $-$, or $0$ (i.e., $\det\sP$ is  not ambiguous).
An \emph{ambiguous sign pattern} is a sign pattern whose determinant is ambiguous.  The notation $\det \sP\neq 0$ means there is at least one nonzero term in the standard expression.  In the proof of \cite[Lemma 1.2.4]{BS95}, it is shown that every ambiguous sign pattern allows a positive determinant and a negative determinant in its qualitative class, so it also allows a zero determinant by continuity.}

Let $\sP$ be an $n\x n$ sign pattern and define $[n]=\{1,\dots,n\}$.  If $\alpha,\beta\subseteq [n]$ are indices of rows and columns, respectively, then $\sP[\alpha,\beta]$ is the subpattern\footnote{The reader is warned that in the sign pattern literature  `subpattern of $\sP$' is sometimes used to mean a pattern obtained from $\sP$ by changing some nonzero entries to 0; we do not use `subpattern' in that way.}  of $\sP$ induced by rows in $\alpha$ and columns in $\beta$.  When $\alpha=\beta$, the subpattern $\sP[\alpha]=\sP[\alpha,\alpha]$ is called a \emph{principal subpattern}.  Also, $\sP(\alpha,\beta)$ is the subpattern of $\sP$ induced by rows outside $\alpha$ and columns outside $\beta$; the principal subpattern $\sP(\alpha, \alpha)$ is also denoted by $\sP(\alpha)$.   This notation is also applied to matrices. 

Digraphs and signed digraphs play a central role in the study of sign patterns.  A {\em digraph} is a pair of sets $\D=(V,E)$ where $E\subseteq V\x V$;  the elements of $v$ are {\em vertices} and the elements of $E$ are called {\em arcs} or {\em directed edges}.  Note that  multiple copies of an arc are not allowed, but $(u,v)$ and $(v,u)$ are considered as different and thus both are permitted.  A {\em loop} is an arc of the form $(v,v)$.
  A \emph{signed digraph} $\D=(V,E,\Sigma)$ is a digraph in which each arc is assigned a sign, i.e., $\Sigma:E\to \{+,-\}$.  A {\em $k$-cycle} is a digraph with vertex set $\{v_1,\dots,v_k\}$ and arc set $\{(v_1,v_2),\dots,(v_{k-1},v_k),(v_k,v_1)\}${, where $k\geq 1$}; $k$ is the {\em length} of a $k$-cycle.  A {\em composite cycle} is a union of one or more disjoint cycles. For a $k$-cycle $C$ in a signed digraph, the {\em cycle product} $\prod(C)$ of $C$ is the product of the signs on the arcs of $C$, and the {\em signed cycle product} of  $C$ is $(-)^{k+1}\prod(C)$.  The terms `cycle product' and `signed cycle product' are also applied to composite cycles by multiplying the parts corresponding to the cycles in the composite cycle.  These terms are also applied to sign patterns and matrices, but only cycles or composite cycles in the digraph are considered (that is, we only consider nonzero cycle products in sign patterns and matrices). 

A (simple undirected) graph $G=(V,E)$  has a set of vertices  and an edge set consisting of two-element subsets of vertices. 
A {\em matching}   is a set $M\subseteq E$ of disjoint edges of $G$; $M$ is a matching of $U \subseteq V$ if 
every vertex in $U$ appears in an edge of $M$.  A matching of $V$ is a {\em perfect matching} of $G$.  The {\em matching number} of a  graph $G$ is the maximum number of edges  in a matching of $G$  and is denoted by $\match(G)$.  The {\em underlying graph} of a digraph $\D=(V,E)$ is the graph $G=(V,\wh E)$ where for $u\ne v$ the edge $\{u,v\}$ is in  $\wh E$ if and only if at least one of the arcs $(u,v)$, $(v,u)$ is in $E$ (and loops in $\Gamma$ are ignored).  

For a digraph $\Gamma =(V,E)$, a {\em subdigraph} is a digraph $\hat\Gamma =(\hat V,\hat E)$ such that $\hat V\subseteq V$ and $\hat E\subseteq E$; $\hat \Gamma$ is {\em induced} if $\hat E =E\cap (\hat V\x \hat V)$, in which case we write  $\hat \Gamma=\Gamma[\hat V]$.  Subgraphs and  induced subgraphs are defined analogously.

A digraph $\D$ is {\em strongly connected} if {for any two distinct vertices $u$ and $v$ there is a walk 
\[(u,w_1),(w_1,w_2),\ldots,(w_k,v)\in E(\D)\]
connecting $u$ and $v$}.  A {\em strong component} of a digraph is a maximal strongly connected induced subdigraph.
A digraph $\D=(V,E)$ is {\em doubly directed} if for every $u,v\in V$, $(u,v)\in E \iff (v,u)\in E$. 
A signed doubly directed digraph $\D=(V,E)$ is {\em skew-symmetric} if for every $(u,v)\in E$, $\Sigma(u,v)=-\Sigma(v,u)$; such a digraph does not have a loop.  Similarly, a signed doubly directed digraph $\D=(V,E)$ is \emph{symmetric} if for every $(u,v)\in E$, $\Sigma(u,v)=\Sigma(v,u)$, with $u$ and $v$ possibly the same. 
A {\em strong ditree} {(or \emph{diforest})} is  a doubly directed digraph  whose underlying graph is a tree (or forest).    The {\em matching number} of a strong ditree or diforest $\Gamma$, denoted by $\match(\Gamma)$, is the matching number of its underlying graph.

The {\em signed digraph $\D(\sP)$} of an $n\times n$ sign pattern $\sP=\begin{bmatrix}p_{ij}\end{bmatrix}$ has vertex set $\{1,\ldots,n\}$, arc set $\{(i,j):p_{ij}\neq 0\}$, and  $\Sigma(i,j)=p_{ij}$; the same terminology is applied to matrices.
   The \emph{simplified} pattern $\sQ$ of $\sP$ is obtained from $\sP$ by setting to zero every $i,j$-entry such that $(i,j)$ is not part of a cycle in $\Gamma(\sP)$ (i.e.,  $(i,j)$ is an arc with endpoints in two different strong components in $\Gamma(\sP)$).  

\begin{rem}\label{rem:simp} Let $\sQ$ be the simplified pattern   of a sign  pattern $\sP$.
Since the removed arcs are not in any cycle, $\SEPR(\sQ)=\SEPR(\sP)$. 
\end{rem}


\section{Basic properties of sepr-sequences of nonsymmetric matrices and sign patterns}\label{s:basic}
 

In this section we present some basic properties of sepr-sequences of not necessarily symmetric matrices and  sign patterns.  
In an sepr-sequence,   $\OL{\XX_i\cdots \XX_j}$ indicates that the complete 
sequence $\XX_i\cdots \XX_j$ may be repeated as many times as desired (or may be omitted entirely).

Many sepr-sequences that are forbidden for symmetric matrices are realized by nonsymmetric matrices. These include fundamental results that apply to pr- and epr-sequences as well.  Here we list two such examples.  The $\NN\NN$ Theorem  \cite[Theorem 2.3]{Xavier17}  says that all terms in an sepr-sequence after  two consecutive terms equal to $\NN$ must be $\NN$  for real symmetric and Hermitian matrices, whereas a matrix $B$ whose digraph is an $n$-cycle with $n\ge 3$ has $\sepr(B)=\NN\NN\OL{\NN}\Ap$ or $\sepr(B)=\NN\NN\OL{\NN}\Am$.  The $\NN\Sa\Aa$ Theorem  \cite[Corollary 1.3]{Xavier17}  says that the subsequence $\NN\Sa\Aa$ is prohibited in the epr-sequence of a real symmetric or Hermitian matrix (so $\NN\Sm\Ap$ etc. are prohibited in an sepr-sequence), whereas $\sepr(B)=\NN\Sm\Ap$ for 
$B=\mtx{
0 & 1 & 0 \\
 1 & 0 & 1 \\
 1 & 0 & 0  }$. 

Some properties established in \cite{Xavier17}  do  remain true for not necessarily symmetric matrices.

\begin{obs} The sepr-sequence of a square  real matrix must end in $\Ap, \Am$, or $\NN$.
\end{obs}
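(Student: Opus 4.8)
The plan is to analyze the last term $t_n$ of $\sepr(B)$, which records the sign behavior of the single order-$n$ principal minor, namely $\det B$ itself. Since there is exactly one $n\times n$ principal submatrix (the whole matrix $B$), there is exactly one order-$n$ principal minor, and it is a fixed real number $\det B$. So the set of signs of order-$n$ principal minors is a single element of $\{+,-,0\}$, depending on whether $\det B$ is positive, negative, or zero.

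Next I would simply match this against the seven possible values of $t_n$ listed in the definition. The values $\Sb$, $\Sp$, $\Sm$ all require the existence of both a zero and a nonzero order-$k$ principal minor (and in the case of $\Sb$, of a positive one and a negative one as well); this is impossible when there is only one such minor. Similarly $\Ab$ requires both a positive and a negative order-$k$ principal minor, again impossible with a single minor. That leaves only $\Ap$ (the unique order-$n$ minor is positive, i.e. $\det B > 0$), $\Am$ (the unique minor is negative, i.e. $\det B < 0$), and $\NN$ (the unique minor is zero, i.e. $\det B = 0$). Exactly one of these three holds, so $t_n \in \{\Ap, \Am, \NN\}$.

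There is no real obstacle here: the argument is just the observation that the ``$\Sa$'' and mixed ``$\Ab$'' designations are meaningful only when the collection of order-$k$ principal minors has at least two members, which fails for $k = n$. The only thing to be careful about is handling the edge case $n = 1$ uniformly (there the unique order-$1$ minor is $b_{11}$, and the same trichotomy applies), and noting that this argument uses nothing about symmetry of $B$, so it carries over verbatim from the symmetric setting. One could phrase the whole proof in a single sentence, but I would spell out the elimination of the five disallowed symbols explicitly for clarity.
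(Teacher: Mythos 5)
Your argument is correct and is exactly the reasoning behind the paper's (unproved) observation: there is only one order-$n$ principal minor, namely $\det B$, so the symbols $\Ab$, $\Sb$, $\Sp$, $\Sm$, which all require at least two minors of differing sign behavior, are impossible, leaving $\Ap$, $\Am$, $\NN$. Nothing further is needed.
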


The proof of  \cite[Theorem 2.4]{Xavier17} uses Jacobi's determinantal identity and not symmetry, so it remains valid and establishes the next result.
\begin{thm}[Inverse Theorem] Suppose $B$ is a nonsingular real square matrix.
\ben[{\rm (i)}]
\item If $\sepr(B) = t_1t_2 \cdots t_{n-1}\Ap$, then $\sepr (B^{-1}) = t_{n-1}t_{n-2} \cdots t_1\Ap$.
\item If $\sepr(B) = t_1t_2 \cdots t_{n-1}\Am$, then $\sepr (B^{-1}) = \negg(t_{n-1}t_{n-2} \cdots t_1\Ap)$ where $\negg (t_1t_2 \cdots t_k)$
is the sequence resulting from replacing $+$ superscripts with $-$ superscripts in $t_1t_2 \cdots t_k$,
and vice versa.
\een
\end{thm}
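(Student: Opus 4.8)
The plan is to mimic the proof of \cite[Theorem 2.4]{Xavier17} verbatim, checking only that symmetry is never used. The single analytic tool is Jacobi's determinantal identity: if $B$ is nonsingular and $\alpha\subseteq[n]$, then
\[
\det\bigl(B^{-1}[\alpha]\bigr)=\frac{\det\bigl(B(\alpha)\bigr)}{\det B}.
\]
This identity holds for arbitrary square (not necessarily symmetric) matrices, so it is available to us unchanged. First I would fix $k$ with $1\le k\le n-1$ and let $\alpha$ range over the $k$-subsets of $[n]$; then $B(\alpha)$ ranges over all principal submatrices of order $n-k$, i.e. over the data recorded by $t_{n-k}(B)$. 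Hence the identity translates the sign behaviour of the order-$k$ principal minors of $B^{-1}$ into the sign behaviour of the order-$(n-k)$ principal minors of $B$, rescaled by the fixed nonzero number $\det B$.

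Next I would split into the two cases according to the sign of $\det B$, which is exactly the content of $t_n(B)\in\{\Ap,\Am\}$ (the case $t_n(B)=\NN$ is excluded since $B$ is nonsingular). If $t_n(B)=\Ap$, then $\det B>0$, so dividing by $\det B$ preserves the sign of every principal minor; consequently the multiset of signs of the order-$k$ principal minors of $B^{-1}$ equals that of the order-$(n-k)$ principal minors of $B$, which gives $t_k(B^{-1})=t_{n-k}(B)$ for $1\le k\le n-1$, and $t_n(B^{-1})=\Ap$ because $\det(B^{-1})=1/\det B>0$. This is precisely part~(i). If instead $t_n(B)=\Am$, then $\det B<0$, so dividing by $\det B$ reverses the sign of every principal minor; this interchanges the roles of ``positive'' and ``negative'' in each term, which is exactly the effect of the $\negg$ operator on the superscripts (and fixes $\NN$, $\Ab$, $\Sb$), giving $t_k(B^{-1})=\negg\bigl(t_{n-k}(B)\bigr)$ for $1\le k\le n-1$ and $t_n(B^{-1})=\Ap$ since $\det(B^{-1})=1/\det B>0$ in this case as well, wait—here $\det B<0$ forces $\det(B^{-1})<0$, so the last term is $\Am$; but $\negg$ of the whole string $t_{n-1}\cdots t_1\Ap$ sends that trailing $\Ap$ to $\Am$, so the stated formula is consistent. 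This yields part~(ii).

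The only subtlety—and it is genuinely routine here—is the bookkeeping that each of the seven symbols $\Ab,\Ap,\Am,\NN,\Sb,\Sp,\Sm$ behaves correctly under multiplication of all minors by a fixed negative constant: the classification depends only on which of the three sign-classes (positive, negative, zero) are nonempty among the order-$k$ principal minors, and negation swaps the positive and negative classes while fixing the zero class, so $\Ap\leftrightarrow\Am$, $\Sp\leftrightarrow\Sm$, and $\Ab,\NN,\Sb$ are fixed—this is exactly the definition of $\negg$ applied termwise. I expect no real obstacle: the whole argument is a transcription of the symmetric case with ``symmetric matrix'' replaced by ``square matrix'' and a check that Jacobi's identity and the $\negg$-bookkeeping are symmetry-free.
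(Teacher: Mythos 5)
Your proposal is correct and is essentially the paper's own argument: the paper simply observes that the proof of Theorem 2.4 in the Mart\'inez-Rivera reference rests on Jacobi's identity $\det(B^{-1}[\alpha])=\det(B(\alpha))/\det B$ and nowhere uses symmetry, which is exactly the computation you carry out (including the sign bookkeeping via $\negg$ when $\det B<0$). Your mid-paragraph self-correction about the last term resolves to the right conclusion ($t_n(B^{-1})=\Am$, matching $\negg(\Ap)$), so nothing is missing.
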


A sign pattern $\sP$ has a {\em fixed $k$-th sepr term} if $t_k(B)=t_k(B')$ for all $B,B'\in Q(\sP)$; in this case  $t_k(\sP)$ denotes this common value $t_k(B)$ for  $B\in Q(\sP)$.

\begin{obs} Let $\sP$ be an $n\x n$ sign pattern.
\ben[{\rm 1.}]
\item $\sP$ has a fixed $1$st sepr term, and any of $\Ab$, $\Ap$, $\Am$, $\NN$, $\Sb$, $\Sp$, or $\Sm$ is possible for $t_1(\sP)$.
\item If $\sP$ has a fixed $n$-th sepr term, then $t_n(\sP)\in \{\Ap,\Am,\NN\}$.
\item If every order $k$ principal subpattern of $\sP$ has signed determinant, then $\sP$ has a fixed $k$-th sepr term.
\een
\end{obs}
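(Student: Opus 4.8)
The statement to prove has three parts, so the plan is to address each in turn.

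For part 1, the observation that $\sP$ has a fixed first sepr term follows because every order-$1$ principal minor of $B \in Q(\sP)$ is just a diagonal entry $b_{ii}$, whose sign is exactly $p_{ii}$ and hence does not depend on the choice of $B$. So $t_1(B)$ is completely determined by the multiset of diagonal signs $\{p_{11},\ldots,p_{nn}\}$. To see that every value in $\{\Ab,\Ap,\Am,\NN,\Sb,\Sp,\Sm\}$ is attainable, I would exhibit small diagonal sign patterns realizing each: for instance $\diag(+)$ gives $\Ap$, $\diag(-)$ gives $\Am$, $\diag(0)$ gives $\NN$, $\diag(+,-)$ gives $\Ab$, $\diag(+,0)$ gives $\Sp$, $\diag(-,0)$ gives $\Sm$, and $\diag(+,-,0)$ gives $\Sb$. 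This part is entirely routine.

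For part 2, suppose $\sP$ has a fixed $n$-th sepr term. The only order-$n$ principal minor is $\det B$ for $B \in Q(\sP)$. Having a fixed $n$-th term means all matrices in $Q(\sP)$ give the same term $t_n$; since there is only a single order-$n$ principal minor, $t_n$ can only be one of $\Ap$ (that minor is positive), $\Am$ (negative), or $\NN$ (zero) — the ``mixed'' symbols $\Ab,\Sb,\Sp,\Sm$ all require at least two distinct order-$k$ principal minors, which is impossible when there is only one. Combined with the hypothesis that this is the common value across $Q(\sP)$, we get $t_n(\sP) \in \{\Ap,\Am,\NN\}$. (One can note in passing that $\sP$ has a fixed $n$-th sepr term precisely when $\sP$ has a signed determinant, using the characterization recalled earlier in the paper, but this is not needed for the stated conclusion.)

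For part 3, assume every order-$k$ principal subpattern $\sP[\alpha]$ (with $|\alpha| = k$) has a signed determinant. Fix any $B \in Q(\sP)$. For each $\alpha \subseteq [n]$ with $|\alpha| = k$, the submatrix $B[\alpha]$ lies in $Q(\sP[\alpha])$, so $\det B[\alpha]$ has the sign dictated by the signed determinant of $\sP[\alpha]$, which is independent of $B$. Hence the multiset of signs $\{\sgn \det B[\alpha] : |\alpha| = k\}$ is the same for every $B \in Q(\sP)$. Since $t_k(B)$ is, by the definition of the sepr-sequence, a function only of this multiset of signs of order-$k$ principal minors, it follows that $t_k(B) = t_k(B')$ for all $B, B' \in Q(\sP)$, i.e., $\sP$ has a fixed $k$-th sepr term. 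I do not foresee a genuine obstacle here; the only care needed is to state cleanly that the sepr term depends on the principal minors solely through which signs occur among them, which is immediate from the definition given at the start of the paper.

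Overall this observation is essentially a sequence of unwindings of definitions; the ``hard part'' is merely organizational — making sure in part 1 that the list of small witnessing diagonal patterns covers all seven symbols, and being explicit in parts 2 and 3 about why the mixed symbols are excluded / why only the multiset of minor-signs matters.
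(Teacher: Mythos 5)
Your proposal is correct and follows exactly the route the paper intends: the Observation is stated without proof as an immediate unwinding of the definitions, and your part 3 reproduces the argument given for the first case of Proposition~\ref{prop:fix-k} (signed determinants of all $k\times k$ principal subpatterns fix the sign of every order-$k$ principal minor, hence the $k$-th term). Your diagonal witnesses for part 1 and the single-minor argument for part 2 are exactly the intended justifications, so nothing further is needed.
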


The next result plays an important role in the study of sign patterns that have unique sepr-sequences, but applies more generally to fixed sepr terms.

\begin{prop}\label{prop:fix-k}
Let $\sP$ be an $n\times n$ sign pattern  such that for some $k\in[n]$ either every $k\times k$ principal subpattern has a signed determinant, or there are three $k\times k$ principal subpatterns that have signed determinants equal to $+$, $-$, and $0$, respectively.
Then $\sP$ has a fixed $k$-th sepr term.
\end{prop}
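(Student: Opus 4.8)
The plan is to reduce everything to a single observation: the value $t_k(B)$ depends on the matrix $B$ only through the subset $S_k(B) := \{\sgn(\det B[\alpha]) : \alpha\subseteq[n],\ |\alpha|=k\}$ of $\{+,-,0\}$ consisting of the signs actually attained by the order-$k$ principal minors of $B$. Inspecting the definition of the sepr-sequence term by term, the seven symbols $\Ab,\Ap,\Am,\NN,\Sb,\Sp,\Sm$ correspond bijectively to the seven nonempty subsets $\{+,-\},\{+\},\{-\},\{0\},\{+,-,0\},\{+,0\},\{-,0\}$ of $\{+,-,0\}$; since $k\in[n]$ there is at least one $\alpha$ of size $k$, so $S_k(B)$ is nonempty and $t_k(B)$ is a well-defined function of $S_k(B)$. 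Hence it suffices to prove that, under either hypothesis, $S_k(B)$ is the same for every $B\in Q(\sP)$.

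For the first hypothesis, suppose every $k\times k$ principal subpattern $\sP[\alpha]$ has a signed determinant. By the characterization recalled just after the definition of ``signed determinant'' (a sign pattern has a signed determinant if and only if its determinant is not ambiguous), $\det\sP[\alpha]$ is one of $+,-,0$ and $\sgn(\det B[\alpha])=\det\sP[\alpha]$ for every $B\in Q(\sP)$. Taking the union over all size-$k$ subsets $\alpha$ yields $S_k(B)=\{\det\sP[\alpha]:|\alpha|=k\}$, which does not depend on $B$; this recovers part~3 of the preceding observation.

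For the second hypothesis, let $\alpha_+,\alpha_-,\alpha_0$ be size-$k$ index sets with $\det\sP[\alpha_+]=+$, $\det\sP[\alpha_-]=-$, and $\det\sP[\alpha_0]=0$. Then for every $B\in Q(\sP)$ we have $\det B[\alpha_+]>0$, $\det B[\alpha_-]<0$, and $\det B[\alpha_0]=0$ (the last because $\sP[\alpha_0]$ has signed determinant $0$), so $\{+,-,0\}\subseteq S_k(B)\subseteq\{+,-,0\}$ and therefore $S_k(B)=\{+,-,0\}$. By the bijection above this forces $t_k(B)=\Sb$ for every $B\in Q(\sP)$, so $\sP$ has a fixed $k$-th sepr term, equal to $\Sb$.

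The argument is purely a matter of organizing definitions, so I do not anticipate a genuine obstacle; the only points that must be handled carefully are (a) checking that the symbol-to-subset correspondence is injective, which is exactly what lets us conclude that $S_k(B)$ determines $t_k(B)$, and (b) invoking the correct meaning of ``signed determinant,'' namely that $\det\sP[\alpha]$ is unambiguous and then equals $\sgn(\det B[\alpha])$ for all $B$ in the qualitative class.
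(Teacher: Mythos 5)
Your proof is correct and follows essentially the same route as the paper's: the paper likewise argues that when all order-$k$ principal subpatterns have signed determinants the $k$-th term is forced, and that three subpatterns with signed determinants $+$, $-$, $0$ force $t_k(B)=\Sb$ for every $B\in Q(\sP)$. Your only addition is to make explicit the (correct) observation that $t_k(B)$ is a function of the set of signs attained by the order-$k$ principal minors, which the paper treats as immediate.
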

\bpf
  If every $k\times k$ principal subpattern has a signed determinant, then clearly the $k$-th term of the sepr-sequence is also determined independent of the choice of a  matrix realization.  If there are $k\times k$ principal subpatterns that have signed determinants $+$, $-$, and $0$, then $t_k(B)=\Sb$ regardless of the  other $k\times k$ principal subpatterns and of the choice of the matrix  $B\in Q(\sP)$.
\epf

\begin{rem}\label{rem:Sstar}  Suppose that  $\sP$ has a fixed $k$-th sepr term and an ambiguous $k\x k$ principal subpattern $\sP[\alpha]$.  This implies there exist $B_+, B_-, B_0\in Q(\sP[\alpha])$ such that $\sgn(\det B_+[\alpha])=+$, $\sgn(\det B_-[\alpha])=-$, and $\det B_0[\alpha]=0$.  Then the fixed $k$-th sepr term implies $t_k(\sP)=\Sb$.
\end{rem}

\begin{prop}\label{prop:t1}  Let $\sP$ be an $n\x n$ sign pattern.  
\ben[{\rm (i)}]
\item\label{c:Ai}  If $t_1(\sP)=\Ap$, then $ \Ap\OL{\Ap}\in \SEPR(\sP).$
\item\label{c:Aii} If $ t_1(\sP)=\Am$ and $n$ is odd, then $ \Am\OL{\Ap\Am}\in \SEPR(\sP)$. 
\item\label{c:Aiii} If $ t_1(\sP)=\Am$ and $n$ is even, then $ \Am\OL{\Ap\Am}\Ap\in \SEPR(\sP)$. 
\een
Furthermore, every sepr-sequence of the forms \eqref{c:Ai}, \eqref{c:Aii}, and \eqref{c:Aiii} is attainable as  the  sepr-sequence of a diagonal sign pattern.  
\end{prop}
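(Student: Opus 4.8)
The hypothesis here is really a statement about the diagonal of $\sP$: since the order-$1$ principal minors of any $B\in Q(\sP)$ are exactly its diagonal entries, $t_1(\sP)=\Ap$ forces every diagonal entry of $\sP$ to be $+$, and $t_1(\sP)=\Am$ forces every diagonal entry to be $-$. In particular $\sP$ has no zero diagonal entry in any of the three cases. The plan is to exhibit a realization $B\in Q(\sP)$ that is a sufficiently small perturbation of a diagonal matrix, and then read off $\sepr(B)$ from that diagonal matrix by continuity of principal minors.

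Concretely, for a parameter $\varepsilon>0$ I would take $B_\varepsilon\in Q(\sP)$ with all diagonal entries equal to $+1$ (in case \ref{c:Ai}) or to $-1$ (in cases \ref{c:Aii} and \ref{c:Aiii}), and with $(i,j)$-entry for $i\ne j$ equal to $\varepsilon$, $-\varepsilon$, or $0$ according as $p_{ij}$ is $+$, $-$, or $0$; this lies in $Q(\sP)$ for every $\varepsilon>0$. Write $D=I_n$ in the first case and $D=-I_n$ in the other two. For each nonempty $\alpha\subseteq[n]$, $\det B_\varepsilon[\alpha]$ is a polynomial in $\varepsilon$ whose value at $\varepsilon=0$ is $\det D[\alpha]=(\pm1)^{|\alpha|}\neq 0$. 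Since there are only finitely many such $\alpha$, there is an $\varepsilon_0>0$ so that for all $0<\varepsilon<\varepsilon_0$ every $\det B_\varepsilon[\alpha]$ has the same sign as $\det D[\alpha]$; fixing such an $\varepsilon$ gives $\sepr(B_\varepsilon)=\sepr(D)$. When $D=I_n$, every principal minor is positive, so $\sepr(D)=\Ap\OL{\Ap}$; when $D=-I_n$, the order-$k$ principal minors all equal $(-1)^k$, hence $t_k=\Am$ for $k$ odd and $t_k=\Ap$ for $k$ even, which gives $\Am\OL{\Ap\Am}$ if $n$ is odd and $\Am\OL{\Ap\Am}\Ap$ if $n$ is even. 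This proves \ref{c:Ai}--\ref{c:Aiii}.

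For the final sentence, take $\sP$ to be the diagonal sign pattern of a given order $n$ with every diagonal entry equal to $+$ (respectively, to $-$). Then for every $\alpha\subseteq[n]$, $\det\sP[\alpha]=\prod_{i\in\alpha}p_{ii}$ equals $+$ (respectively, $(-)^{|\alpha|}$), which is not ambiguous, so every principal subpattern of $\sP$ has a signed determinant. By Proposition~\ref{prop:fix-k}, $\sP$ has a fixed $k$-th sepr term for every $k$, i.e., $\sP$ has a unique sepr-sequence, and the minor computation above shows it is $\Ap\OL{\Ap}$ (respectively, $\Am\OL{\Ap\Am}$ or $\Am\OL{\Ap\Am}\Ap$, according to the parity of $n$). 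Letting $n$ range over all positive integers (respectively, all odd, all even positive integers) realizes every sequence of the form in \ref{c:Ai} (respectively, \ref{c:Aii}, \ref{c:Aiii}) as the sepr-sequence of a diagonal sign pattern.

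There is no serious obstacle in this argument; the only points requiring care are verifying that $B_\varepsilon$ genuinely lies in $Q(\sP)$ (this is exactly where the hypothesis $t_1(\sP)=\Ap$ or $\Am$ is used, to force the diagonal signs, while the off-diagonal signs match by construction and zeros stay zero), and keeping the parity bookkeeping straight so that the repeated block $\OL{\Ap\Am}$ together with the trailing term(s) matches the case $n$ odd versus $n$ even.
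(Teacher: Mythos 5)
Your proof is correct and follows essentially the same route as the paper: the paper also realizes the all-{\tt A} sequences by taking diagonal entries $\pm 1$ and off-diagonal entries $0$ or $\pm\eps$ (with the explicit choice $\eps=\frac{1}{n!}$ in place of your continuity argument for small $\eps$), and likewise verifies the diagonal sign patterns directly. Your additional details — why the hypothesis forces the diagonal signs and the appeal to Proposition~\ref{prop:fix-k} for the diagonal patterns — are sound and merely spell out what the paper calls straightforward.
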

\bpf The stated all-{\tt A} sepr-sequences can be realized by choosing matrices with all diagonal entries  $\pm 1$ and every off-diagonal entry  0 or $\pm\eps$ where $\eps=\frac 1{n!}$.
It is straightforward to verify that a diagonal sign pattern with all $+$ or all $-$ on the diagonal gives the desired sepr-sequence.
\epf


A square matrix or sign pattern $M$  is {\em reducible} if there exists a permutation matrix $P$ such that $P\trans MP=\mtx{A & C\\ O & B}$; otherwise, it is \emph{irreducible}.
It is well-known that a matrix or sign pattern is irreducible if 
and only if its digraph is strongly connected. %

To provide compact notation for determining the sepr-sequence of a reducible matrix from its irreducible parts, we list  definitions of  addition and  multiplication of the symbols $\NN , \Ap , \Am , \Ab , \Sp , \Sm , \Sb$ in the tables below, and then define a rule $\ba*\bb$ for combining sepr-sequences.
\[
\begin{array}{c|ccccccc}
 + & \NN & \Ap & \Am & \Ab & \Sp & \Sm & \Sb \\
\hline 
\NN & \NN & \Sp & \Sm & \Sb & \Sp & \Sm & \Sb \\
\Ap & \Sp & \Ap & \Ab & \Ab & \Sp & \Sb & \Sb \\
\Am & \Sm & \Ab & \Am & \Ab & \Sb & \Sm & \Sb \\
\Ab & \Sb & \Ab & \Ab & \Ab & \Sb & \Sb & \Sb \\
\Sp & \Sp & \Sp & \Sb & \Sb & \Sp & \Sb & \Sb \\
\Sm & \Sm & \Sb & \Sm & \Sb & \Sb & \Sm & \Sb \\
\Sb & \Sb & \Sb & \Sb & \Sb & \Sb & \Sb & \Sb \\
\end{array}
\qquad
\begin{array}{c|ccccccc}
 \cdot & \NN & \Ap & \Am & \Ab & \Sp & \Sm & \Sb \\
\hline 
\NN & \NN & \NN & \NN & \NN & \NN & \NN & \NN \\
\Ap & \NN & \Ap & \Am & \Ab & \Sp & \Sm & \Sb \\
\Am & \NN & \Am & \Ap & \Ab & \Sm & \Sp & \Sb \\
\Ab & \NN & \Ab & \Ab & \Ab & \Sb & \Sb & \Sb \\
\Sp & \NN & \Sp & \Sm & \Sb & \Sp & \Sm & \Sb \\
\Sm & \NN & \Sm & \Sp & \Sb & \Sm & \Sp & \Sb \\
\Sb & \NN & \Sb & \Sb & \Sb & \Sb & \Sb & \Sb \\
\end{array}
\]
Given two sepr-sequences $\ba=a_1a_2\cdots a_n$ and $\bb=b_1b_2\cdots b_m$, define
\[\label{eq:red} \ba*\bb=t_1\cdots t_{n+m} \mbox{ where } t_k=\sum_{\ell=0}^k a_\ell\cdot b_{k-\ell};\]
 by convention $a_0=b_0=\Ap$, and products that include $a_j$ with $j>n$, and $b_j$ with $j>m$ or $j<0$, are ignored.  Note that although the product of the symbols $\NN , \Ap$, etc.~is an actual product (reflecting a product of minors), the ``sum" here is not a sum at all, but  the symbol obtained when combining the existence of certain signs of minors. 
\begin{ex}\label{ex:reduce} Suppose  $\ba=\Sp\NN$ and  $\bb=\Ap\Sp\Am$.  Then  $\ba*\bb=\Sp\Sp\Sb\Sm\NN$ because
\[\begin{aligned}
t_1 &= \Ap\cdot\Ap+\Sp\cdot\Ap=\Ap+\Sp=\Sp, \\
t_2 &=\Ap\cdot\Sp+\Sp\cdot \Ap + \NN\cdot\Ap = \Sp+\Sp+\NN=\Sp, \\
t_3 &= \Ap\cdot\Am+\Sp\cdot\Sp+\NN\cdot\Ap=\Am+\Sp+\NN=\Sb, \\
t_4 &= \Sp\cdot\Am+\NN\cdot\Sp=\Sm+\NN=\Sm,\\
t_5 &= \NN\cdot\Am=\NN.\\
\end{aligned}\]
 \end{ex}

When the definitions of $+, \cdot$, and $*$ are understood, the next result is immediate. 

\begin{prop}\label{prop:reduce}
If $A$ and $B$ are square matrices and $C$ is a matrix of appropriate dimensions, then  
\[\sepr\left(\mtx{ A & C \\ O & B }\right)=\sepr(A)*\sepr(B).\]
If $\sA$ and $\sB$ are square sign patterns  and $\sC$ is a sign pattern of appropriate dimensions, then \[\SEPR\left(\mtx{ \sA & \sC \\ O & \sB }\right)=\{\ba*\bb : \ba\in \SEPR(\sA), \bb\in \SEPR(\sB)\}.\]
\end{prop}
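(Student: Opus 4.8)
The plan is to reduce everything to multiplicativity of the determinant across a block-triangular decomposition, and then to observe that the symbol operations $+$ and $\cdot$ are exactly the bookkeeping for unions and products of sets of real numbers.

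First I would fix $M=\mtx{A & C\\ O & B}$ with $A$ of order $n$ and $B$ of order $m$, and identify the index set of $M$ with $[n]\dunion[m]$ so that the first block uses indices $[n]$ and the second uses $\{n+1,\dots,n+m\}$. For a principal index set $\alpha\subseteq[n+m]$ with $|\alpha|=k$, write $\alpha=\alpha_1\dunion\alpha_2$ with $\alpha_1$ in the $A$-part and $\alpha_2$ in the $B$-part; since $M$ is block upper triangular, so is $M[\alpha]$, with diagonal blocks $A[\alpha_1]$ and $B[\alpha_2]$ (the latter re-indexed in $[m]$). Hence $\det M[\alpha]=\det A[\alpha_1]\cdot\det B[\alpha_2]$, using the convention that a $0\times0$ principal minor equals $1$ --- this is precisely what makes $a_0=b_0=\Ap$ the correct convention. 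Writing $S_j(\cdot)$ for the set of order-$j$ principal minors of a matrix, and letting $\ell=|\alpha_1|$ (which ranges over $\max(0,k-m)\le\ell\le\min(k,n)$, i.e.\ exactly the $\ell$ for which the term is not ignored in the definition of $\ba*\bb$), this gives
\[
S_k(M)=\bigcup_{\ell}\{\,xy : x\in S_\ell(A),\ y\in S_{k-\ell}(B)\,\}.
\]

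Next I would introduce, for a finite nonempty $T\subseteq\mathbb R$, its \emph{sign status} $\sigma(T)\in\{\NN,\Ap,\Am,\Ab,\Sp,\Sm,\Sb\}$, recording which of $+$, $-$, $0$ occur among elements of $T$; by the definition of the sepr-sequence, $t_k(M)=\sigma(S_k(M))$, $a_\ell=\sigma(S_\ell(A))$, $b_j=\sigma(S_j(B))$ (and all relevant sets are nonempty for $0\le k\le n+m$, $0\le \ell\le n$, $0\le j\le m$). The two facts to verify are: (i) $\sigma\big(\{xy:x\in T_1,y\in T_2\}\big)$ depends only on $\sigma(T_1),\sigma(T_2)$ and equals $\sigma(T_1)\cdot\sigma(T_2)$ from the multiplication table; and (ii) $\sigma(T_1\cup T_2)=\sigma(T_1)+\sigma(T_2)$ from the addition table. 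Fact (ii) is immediate since the set of signs occurring in $T_1\cup T_2$ is the union of those occurring in each. Fact (i) follows from the sign rules for real products ($\sgn(xy)=\sgn(x)\sgn(y)$): under the encoding $\NN\leftrightarrow\{0\}$, $\Ap\leftrightarrow\{+\}$, $\Am\leftrightarrow\{-\}$, $\Ab\leftrightarrow\{+,-\}$, $\Sp\leftrightarrow\{+,0\}$, $\Sm\leftrightarrow\{-,0\}$, $\Sb\leftrightarrow\{+,-,0\}$, the symbol $+$ is set union and the symbol $\cdot$ is the complex product of sign-subsets, and one checks the tables agree. Granting (i) and (ii), applying $\sigma$ to the displayed formula for $S_k(M)$ yields $t_k(M)=\sum_\ell a_\ell\cdot b_{k-\ell}$, which is exactly the $k$-th term of $\sepr(A)*\sepr(B)$; this proves the matrix statement.

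Finally, for sign patterns, I would note that $Q\!\left(\mtx{\sA & \sC\\ O & \sB}\right)$ is precisely the set of matrices $\mtx{A & C\\ O & B}$ with $A\in Q(\sA)$, $C\in Q(\sC)$, $B\in Q(\sB)$; by the matrix case $\sepr\!\left(\mtx{A & C\\ O & B}\right)=\sepr(A)*\sepr(B)$ does not depend on $C$, and $Q(\sC)\neq\varnothing$, so taking the set of sepr-sequences over all realizations gives $\{\,\ba*\bb : \ba\in\SEPR(\sA),\ \bb\in\SEPR(\sB)\,\}$, as claimed. The only point requiring actual work is the bookkeeping in fact (i) --- in particular the entries involving $\Ab,\Sp,\Sm,\Sb$ (e.g.\ $\Ab\cdot\Ab=\Ab$, $\Sb\cdot\Ap=\Sb$, $\Sp\cdot\Sm=\Sm$, $\NN\cdot\Sb=\NN$) --- but this is the routine verification that justifies the paper's remark that the result is immediate once the tables are understood.
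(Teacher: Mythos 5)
Your proof is correct and follows exactly the argument the paper treats as immediate: block upper triangular principal submatrices give $\det M[\alpha]=\det A[\alpha_1]\det B[\alpha_2]$, and the $+$ and $\cdot$ tables are precisely union and product of the sets of signs, so the convolution formula for $\ba*\bb$ drops out, with the sign-pattern case following since $\sepr$ of the block matrix is independent of $C$. You have simply written out the routine verification the paper omits; there is no divergence in approach and no gap.
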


As a result of Proposition \ref{prop:reduce}, for many purposes it is sufficient to determine the sepr-sequences of irreducible sign patterns.

\section{Sign patterns with unique sepr-sequences} \label{s:unique}
 
In this section we give sufficient conditions for a sign pattern to have a unique sepr-sequence, conjecture that these conditions are necessary,  and determine various sign patterns that have unique sepr-sequences.  We also determine necessary conditions for an sepr-sequence to be the  unique sepr-sequence of a sign pattern.  Finally, we study sepr-sequences of sign semi-stable matrices and other sign patterns with similar structural properties.  We begin with some simple results and examples.

 If $\sP$ and $\sQ$ each have a unique sepr-sequence, then $\sP\oplus \sQ$ also has a unique sepr-sequence by Proposition \ref{prop:reduce}.  The sign patterns in the next example show that $\sP\oplus\sQ$ can have a unique sepr-sequence even though neither of them does. 
 
 \begin{ex}\label{ex:red-unique} Let 
\[\sP=\begin{bmatrix}
+ & + & 0 \\
- & - & + \\
0 & + & 0 
\end{bmatrix}\text{ and }
\sQ=\begin{bmatrix}
- & + & - \\
- & + & + \\
- & - & 0 
\end{bmatrix}.\]
Then $\SEPR(\sP)=\{\Sb\Sm\Am, \Sb\Sb\Am\}$ and $\SEPR(\sQ)=\{\Sb\Ab\Am,\Sb\Sb\Am\}$ are not unique.  However, $\sepr(\sP\oplus\sQ)=\{\Sb\Sb\Sb\Sb\Sb\Ap\}$ is unique.
\end{ex}

It is immediate that if every principal subpattern of a sign pattern $\sP$ has a signed determinant, then $\sP$ has a unique sepr-sequence.  Some important classes of sign patterns, such as sign semi-stable patterns, have this property (see Section \ref{s-sss}).  But there are sign patterns with unique sepr-sequences for which some principal subpatterns do not have a signed determinant; see  {Example~\ref{ex:red-unique}, or  Example~\ref{ex:NSDirreducible} for an irreducible example.}

\begin{ex}
\label{ex:NSDirreducible}
Let
\[\sP=\begin{bmatrix}
+ & + & 0 & 0 \\
0 & - & + & 0 \\
+ & 0 & + & + \\
0 & 0 & - & 0
\end{bmatrix}.\]
The principal subpattern $\sP[\{1,2,3\}]$ does not have a signed determinant.  However, $\sP$ has the unique sepr-sequence $\Sb\Sb\Sb\Am$.  Note that if we change the $3,4$-entry to zero, the resulting pattern does not have a unique sepr, as it has $\SEPR(\sP)=\{\Sb\Sb\NN\NN, \Sb\Sb\Sp\NN,\Sb\Sb\Sm\NN\}$.
\end{ex}

\begin{prop}\label{prop:Aunique}  Let $\sP$ be an $n\x n$ sign pattern.  
If $t_1(\sP)=\Ap$ and $\sP$ has a unique sepr-sequence, then every signed  cycle product of  $\D(\sP)$ is positive.  If $t_1(\sP)=\Am$ and $\sP$ has a unique sepr-sequence, then every signed cycle product of $\D(\sP)$ of order $k$ has the sign $(-)^k$. 
\end{prop}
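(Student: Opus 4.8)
The strategy is to use Proposition~\ref{prop:t1} together with the uniqueness hypothesis to pin down $\sepr(\sP)$ exactly, and then to scale a single principal submatrix so as to read off the sign of a prescribed cycle product from one principal minor. First I would record that $t_1(\sP)=\Ap$ means precisely that every diagonal entry of $\sP$ is $+$, while $t_1(\sP)=\Am$ means every diagonal entry is $-$; in either case every loop of $\D(\sP)$ has signed cycle product equal to its (common) diagonal sign, which is $+$, respectively $-=(-)^1$, so the claim already holds for $1$-cycles. By Proposition~\ref{prop:t1}, the sequence $\Ap\OL{\Ap}$ when $t_1(\sP)=\Ap$, and $\Am\OL{\Ap\Am}$ or $\Am\OL{\Ap\Am}\Ap$ (according to the parity of $n$) when $t_1(\sP)=\Am$, belongs to $\SEPR(\sP)$; since $\sP$ has a unique sepr-sequence, that sequence is $\sepr(\sP)$. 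Hence for every $B\in Q(\sP)$ and every $S\subseteq[n]$ we have $\det B[S]>0$ in the first case and $\sgn(\det B[S])=(-)^{|S|}$ in the second.

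Now fix a $k$-cycle $C$ of $\D(\sP)$ with $k\ge 2$; let $S=\{v_1,\dots,v_k\}$ be its vertex set and $(v_1,v_2),\dots,(v_{k-1},v_k),(v_k,v_1)$ its arcs. Pick any $B\in Q(\sP)$, and for a real parameter $M>0$ let $B_M$ be obtained from $B$ by multiplying each of the $k$ entries $b_{v_iv_{i+1}}$ (indices read mod $k$) by $M$ and leaving every other entry unchanged; then $B_M\in Q(\sP)$. Expanding $\det(B_M[S])$ by the standard determinant expression displays it as a polynomial in $M$ of degree at most $k$, and the coefficient of $M^k$ receives a contribution only from the permutation of $S$ using all $k$ arcs of $C$, namely the $k$-cycle permutation $\sigma_C$; that coefficient is $\sgn(\sigma_C)\prod_{i=1}^{k}b_{v_iv_{i+1}}$, a nonzero real number whose sign is $(-1)^{k-1}\prod(C)=(-)^{k+1}\prod(C)$, i.e.\ the signed cycle product of $C$. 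Therefore $\sgn(\det(B_M[S]))$ equals the signed cycle product of $C$ for all sufficiently large $M$.

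Since $B_M\in Q(\sP)$, the first paragraph forces $\det(B_M[S])>0$ for every $M>0$ when $t_1(\sP)=\Ap$, so the signed cycle product of $C$ is $+$; and it forces $\sgn(\det(B_M[S]))=(-)^{|S|}=(-)^k$ when $t_1(\sP)=\Am$, so the signed cycle product of $C$ is $(-)^k$. Finally, a composite cycle of order $k$ with component cycles of orders $k_1,\dots,k_r$ (so $k_1+\cdots+k_r=k$) has signed cycle product equal to the product of the component signed cycle products, which is a product of $+$'s, hence $+$, in the first case, and is $\prod_i(-)^{k_i}=(-)^{k}$ in the second; this finishes the argument.

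The only delicate point is the determinant bookkeeping in the second paragraph: one must check that $\sigma_C$ is the unique permutation of $S$ producing a monomial of $M$-degree $k$ (which is immediate, since using all arcs $(v_i,v_{i+1})$ forces $\sigma(v_i)=v_{i+1}$ for every $i$) and that the sign of the resulting leading coefficient, via $(-1)^{k-1}=(-)^{k+1}$ together with the signs on the arcs of $C$ in $\sP$, is exactly the signed cycle product of $C$ as defined, so that $\sgn(\det(B_M[S]))$ stabilizes to it as $M\to\infty$. Everything else follows directly from Proposition~\ref{prop:t1} and the uniqueness hypothesis.
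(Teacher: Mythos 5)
Your proof is correct, but it runs along a different track than the paper's. The paper never determines the full sepr-sequence: it argues at the level of the sign-pattern determinant, observing that if some signed cycle product had the wrong sign then the principal subpattern on the cycle's support would be ambiguous (its all-diagonal term and its cycle term have opposite signs), and padding with the remaining diagonal entries makes $\det\sP$ itself ambiguous; since an ambiguous pattern allows positive, negative, and zero determinants, the last term of the sepr-sequence could not be fixed, contradicting uniqueness. You instead invoke Proposition~\ref{prop:t1} together with uniqueness to force $\sepr(\sP)=\Ap\OL{\Ap}$ (resp.\ the alternating $\Am\Ap\cdots$ sequence), so that the sign of every principal minor of every realization is prescribed, and then you read off the signed cycle product of a given $k$-cycle as the sign of the leading coefficient in $M$ of $\det(B_M[S])$ after scaling the cycle's arcs, obtaining the contradiction at the level of $t_k$ rather than $t_n$; composite cycles are then handled multiplicatively. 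Your route is longer and uses concrete realizations plus an asymptotic argument, but it is fully rigorous (the identification of $\sigma_C$ as the unique permutation contributing in degree $k$, and $\sgn(\sigma_C)=(-1)^{k-1}$, are exactly the right bookkeeping), it makes the dependence on Proposition~\ref{prop:t1} explicit, and it spells out the composite-cycle case; the paper's argument is shorter, purely combinatorial, and needs only the fact that an ambiguous pattern cannot have a fixed final sepr term. The only cosmetic slip is that $S$ should be taken nonempty when you assert $\det B[S]>0$ for all $S\subseteq[n]$.
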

\bpf 
Suppose $t_1(\sP)=\Ap$ and $\sP$ has a unique sepr-sequence.  If  $\det\sP[\alpha]\ne+$, then $\sP[\alpha]$ is ambiguous (because it has a positive product of diagonal entries).  Then we see that  $\sP$ is ambiguous by considering  signed cycle products of opposite signs in $\alpha$ multiplied by positive diagonal entries not in $\alpha$. But having $\sP$  ambiguous contradicts $\sP$ having a unique sepr-sequence.  The proof for $t_1(\sP)=\Am$ is similar. 
\epf

The next example exhibits $\Ap\OL{\Ap}$ and $\Am\Ap\Am\cdots$ as unique sepr-sequences for irreducible sign patterns.
\begin{ex}
Let 
\[\sP^+=\begin{bmatrix}
+ & - & + & - & \cdots \\
+ & + & 0 & \cdots & 0 \\
0 & + & + & \ddots & \vdots \\
\vdots & \ddots & \ddots & \ddots & 0 \\
0 & \cdots & 0 & + & +
\end{bmatrix}\text{ and }
\sP^-=\begin{bmatrix}
- & - & - & - & \cdots \\
+ & - & 0 & \cdots & 0 \\
0 & + & - & \ddots & \vdots \\
\vdots & \ddots & \ddots & \ddots & 0 \\
0 & \cdots & 0 & + & -
\end{bmatrix}.
\]
Here the first row of  $\sP^+$  is sign-alternating and the first row of  $\sP^-$ is all $-$.  Then $\sepr(\sP^+)=\Ap\OL{\Ap}$ and $\sepr(\sP^-)=\Am\Ap\Am\cdots$.
\end{ex}

\subsection{Sufficient conditions  for a unique sepr-sequence}\label{ss:nec-suf-unique}

The next result follows immediately from Proposition \ref{prop:fix-k}.

\begin{cor}\label{prop:unique}
Let $\sP$ be an $n\times n$ sign pattern  such that for each $k=1,\ldots, n$, either every $k\times k$ principal subpattern has a signed determinant, or there are three $k\times k$ principal subpatterns that have signed determinants equal to $+$, $-$, and $0$, respectively.
Then $\sP$ has a unique sepr-sequence.
\end{cor}

Each of the sign patterns  in the next corollary has a unique sepr-sequence since  all principal subpatterns have  signed determinants.
\begin{cor}\label{cor:unique}
 Any sign pattern $\sP$ that has one of the following signed digraphs has a  unique sepr-sequence.  If an sepr-sequence is listed, it is determined by the signed digraph.
\ben[{\rm 1.}]
\item An $n$-cycle: $\overline{\NN}{\tt A}^{x}$, where $x$ is the sign of the signed cycle product of the $n$-cycle.
\item\label{dircycloops} An $n$-cycle with $1\le \ell \le n-1$ loops:  $t_k(\sP)\in\{\Sp,\Sm,\Sb\}$ for $1\le k\le \ell$, $t_k(\sP)=\NN$ for $\ell<k<n$  and $t_n(\sP)\in\{\Ap,\Am\}$. 
\item\label{dircycloops-all} An $n$-cycle with  $n$ loops such that the signed $n$-cycle product has the same sign as the signed product of the $n$ loops:  $t_k(\sP)\in\{\Ap,\Am,\Ab\}$ for $1\le k\le n-1$  and $t_n(\sP)\in\{\Ap,\Am\}$. 
\item A loopless skew-symmetric (doubly directed) cycle on $2s$ vertices with the $2s$-cycle product negative: $\overline{\NN\Sp}\NN\Ap$.   
\item A loopless symmetric (doubly directed) cycle on $2s+1$ vertices: Let $x$ denote the product of the signs of one $(2s+1)$-cycle.  $\OL{\NN\Sm\NN\Sp}\NN\Sm\NN\Ap{\tt A}^{x}$ for $s$ even and $\OL{\NN\Sm\NN\Sp}\NN\Am{\tt A}^{x}$ for $s$ odd. 
\item {A strong ditree with at most one loop.}
\een
\end{cor}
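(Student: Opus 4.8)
The plan is to prove, for each of the six families, that \emph{every} principal subpattern has a signed determinant; uniqueness of the sepr-sequence then follows immediately from Corollary~\ref{prop:unique}, and the listed sequences drop out of the same analysis. The unifying tool is the standard-expression description of $\det\sP[\alpha]$: its nonzero terms are in bijection with the composite cycles of $\D(\sP)$ covering exactly $\alpha$, and the sign of each term equals the signed cycle product of that composite cycle (for a $k$-cycle, $\sgn$ of the corresponding cyclic permutation is $(-)^{k+1}$, matching the definition, and signs multiply over disjoint cycles). Hence $\sP[\alpha]$ has a signed determinant if and only if all composite cycles covering $\alpha$ share a common signed cycle product (the empty case giving $\det=0$). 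So for each family I would enumerate the composite cycles covering an arbitrary $\alpha$ and check sign-agreement, then count which orders $k$ admit nonzero and/or zero minors to read off $t_k$.

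For cases 1--3 the only cycle of length $>1$ is the full $n$-cycle, which covers all $n$ vertices. Thus for a proper $\alpha$ the only available composite cycles use loops, so $\sP[\alpha]$ is covered in at most one way (the product of the loops at the vertices of $\alpha$, if all carry loops) and is automatically signed; for $\alpha=[n]$ the covering composite cycles are the full $n$-cycle together with, in case 3 only, the all-loops composite. In case 1 every proper minor is $0$ and $\det\sP={\tt A}^{x}$ with $x$ the signed cycle product, giving $\overline{\NN}{\tt A}^{x}$. In case 2 ($\ell$ loops) exactly the orders $k\le\ell$ admit a nonzero minor (a $k$-subset of loop vertices) and, since $\ell<n$, also a zero minor (include a loopless vertex), forcing $t_k\in\{\Sp,\Sm,\Sb\}$; orders $\ell<k<n$ admit no covering, giving $\NN$; and $t_n\in\{\Ap,\Am\}$ from the lone $n$-cycle. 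In case 3 every proper order $k\le n-1$ has all minors nonzero (all vertices have loops), giving $t_k\in\{\Ap,\Am,\Ab\}$, while at order $n$ the two covering terms are the signed $n$-cycle product and the product of the $n$ loop signs; the hypothesis that these agree is exactly what makes $\det\sP$ non-ambiguous, yielding $t_n\in\{\Ap,\Am\}$.

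Cases 4 and 5 are where I expect the real work. In a doubly directed cycle the only cycles are the two-cycles on the edges and the two full directed cycles. Skew-symmetry makes each two-cycle product $-$, hence signed cycle product $+$; symmetry makes each two-cycle product $+$, hence signed cycle product $-$. A composite cycle covering an even set $\alpha$ of size $2m$ that avoids the full cycle is a matching of $m$ two-cycles, so its signed cycle product is $(+)^{m}$ (case 4) or $(-)^{m}$ (case 5)---the \emph{same} value for every such covering, which is the key sign-agreement fact; odd orders below the full order admit no covering and give $\NN$. The delicate points, and the main obstacle, are the top orders. For case 4 the full set $[2s]$ is covered by the two perfect matchings of $C_{2s}$ (each contributing $+$) and the two full cycles; skew-symmetry makes both full cycles have the same product, and the hypothesis that the $2s$-cycle product is negative makes their signed cycle product $+$, so $\det\sP=+$ and $t_{2s}=\Ap$, yielding $\overline{\NN\Sp}\NN\Ap$. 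For case 5 the crux is that at order $2s$ \emph{every} vertex-deleted subpattern induces the even path $P_{2s}$, which has a unique perfect matching, so there is no zero minor of order $2s$ and $t_{2s}$ is the all-{\tt A} term ${\tt A}^{(-)^{s}}$ ($\Ap$ for $s$ even, $\Am$ for $s$ odd); meanwhile $C_{2s+1}$ has no perfect matching, so the full order is covered only by the two full cycles, both with signed cycle product $x$, giving $t_{2s+1}={\tt A}^{x}$. Tracking the alternation of $(-)^{m}$ as $m$ grows (and that the lower even orders admit both a matched set and a set with an isolated vertex, forcing the {\tt S}-type terms) produces $\overline{\NN\Sm\NN\Sp}\NN\Sm\NN\Ap{\tt A}^{x}$ for $s$ even and $\overline{\NN\Sm\NN\Sp}\NN\Am{\tt A}^{x}$ for $s$ odd; I would verify the block/parity bookkeeping on the small cases $s=1,2,3,4$ to be sure the repeated block lines up.

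Finally, case 6 needs only the signed-determinant property (no sequence is listed). In a strong ditree the only cycles are the two-cycles on the edges and, if present, the single loop. For a fixed $\alpha$, a covering either uses the loop (possible only when the loop vertex lies in $\alpha$, and then it must perfectly match the rest of $\alpha$) or does not (and must perfectly match all of $\alpha$); these two options require $|\alpha|$ of opposite parities, so at most one applies. Since the underlying graph is a forest and a forest has at most one perfect matching, the surviving option yields at most one composite cycle covering $\alpha$. Hence $\det\sP[\alpha]$ has at most one nonzero term and is never ambiguous, so it is signed. With all principal subpatterns signed in every case, Corollary~\ref{prop:unique} gives a unique sepr-sequence, completing the proof. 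The ``at most one loop'' hypothesis is essential here: a second loop would allow two coverings of the same parity with possibly opposite signs.
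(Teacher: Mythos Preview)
Your proposal is correct and follows exactly the approach the paper takes: show that every principal subpattern has a signed determinant and invoke Corollary~\ref{prop:unique}. The paper in fact states only the single sentence ``Each of the sign patterns in the next corollary has a unique sepr-sequence since all principal subpatterns have signed determinants'' and leaves the verification to the reader; you have supplied that verification in full, including the composite-cycle bookkeeping that pins down the listed sequences.
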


{If $\sP$ has a nonzero signed determinant, then $\sP$ is {sign nonsingular} \cite[Section 1.2]{BS95}.  Under the assumption that every diagonal entry of $\sP$ is negative, $\sP$ is sign nonsingular if and only if every cycle in $\Gamma(\sP)$ has a negative  cycle product \cite[Theorem 3.2.1]{BS95}.  Note that sign nonsingularity is preserved under permutation and signature multiplication, but these operations may change an sepr-sequence.}

\begin{cor}\label{cor:SNS} If $\sP$ is a sign pattern such that $\D(\sP)$ has each  cycle product  negative, then $\sP$ has a unique sepr-sequence.  This includes any sign nonsingular pattern $\sQ$ with all diagonal entries negative, which has $\sepr(\sQ)=\Am\Ap\OL{\Am\Ap}$ for $n(\sQ)$ even and $\sepr(\sQ)=\Am\OL{\Ap\Am}$ for $n(\sQ)$ odd.
\end{cor}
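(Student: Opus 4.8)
The plan is to derive Corollary~\ref{cor:SNS} from Corollary~\ref{cor:unique} together with the cited characterization of sign nonsingularity. First I would argue the general uniqueness claim: if every cycle product of $\D(\sP)$ is negative, then for any $\alpha\subseteq[n]$ and any composite cycle $C$ on vertex set $\alpha$ consisting of $j$ cycles, the signed cycle product of $C$ is $(-)^{j}\prod(C)$ times the appropriate $(-)^{|\alpha|+1}$-type factors; the point is that all terms in the standard expression of $\det\sP[\alpha]$ that are nonzero correspond to composite cycles covering $\alpha$, and I claim they all carry the same sign. Concretely, for a composite cycle covering $\alpha$ with cycles of lengths $k_1,\dots,k_j$ (so $\sum k_i=|\alpha|$), the contribution to $\det\sP[\alpha]$ has sign $\operatorname{sgn}(\sigma)\prod_i\prod(C_i)$, and since $\operatorname{sgn}$ of a $k$-cycle is $(-)^{k+1}$, this equals $\prod_i\big((-)^{k_i+1}\prod(C_i)\big)=\prod_i(-(\text{signed cycle product of }C_i))$; each factor is $-(-)=+$ by hypothesis, so every nonzero term of $\det\sP[\alpha]$ has sign $(-)^{|\alpha|}$ (working out the bookkeeping of the $(-)^{|\alpha|}$ versus the product of $j$ minus signs — this is the one routine computation to be careful with, but it is a parity count). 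Hence $\det\sP[\alpha]$ is either $0$ (no composite cycle covers $\alpha$) or has the fixed sign $(-)^{|\alpha|}$, so every principal subpattern has a signed determinant, and $\sP$ has a unique sepr-sequence by the remark preceding Corollary~\ref{prop:unique} (or directly by Proposition~\ref{prop:fix-k} applied for each $k$).

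Next I would handle the specific claim about sign nonsingular patterns $\sQ$ with all diagonal entries negative. By \cite[Theorem 3.2.1]{BS95}, the hypothesis that $\sQ$ is sign nonsingular with negative diagonal is exactly equivalent to every cycle in $\Gamma(\sQ)$ having a negative cycle product, so the first part applies and $\sQ$ has a unique sepr-sequence. It remains to compute $\sepr(\sQ)$. For each $k$, the diagonal gives a composite cycle (namely $k$ loops) covering any chosen $\alpha$ of size $k$, so $\det\sQ[\alpha]\ne 0$ for every $\alpha$; thus by the sign computation above, $\det\sQ[\alpha]$ has sign $(-)^k$ for all $|\alpha|=k$. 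Therefore $t_k(\sQ)=\Am$ if $k$ is odd and $t_k(\sQ)=\Ap$ if $k$ is even, which gives exactly $\sepr(\sQ)=\Am\Ap\OL{\Am\Ap}$ when $n(\sQ)$ is even and $\sepr(\sQ)=\Am\OL{\Ap\Am}$ when $n(\sQ)$ is odd.

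I expect the main (and really the only) obstacle to be the parity bookkeeping in the claim ``every nonzero term in $\det\sP[\alpha]$ has sign $(-)^{|\alpha|}$,'' i.e., correctly reconciling the definition of signed cycle product $(-)^{k+1}\prod(C)$ with $\operatorname{sgn}(\sigma)$ for a permutation whose cycle decomposition has $j$ nontrivial cycles. One clean way to present it: a permutation $\sigma$ of a $k$-set that is a product of $j$ disjoint cycles (covering all $k$ elements, fixed points counted as $1$-cycles) has $\operatorname{sgn}(\sigma)=(-)^{k-j}$; then the term's sign is $(-)^{k-j}\prod_{i}\prod(C_i)=(-)^{k}\prod_i\big((-)^{-1}\prod(C_i)\big)=(-)^k\prod_i\big(-\prod(C_i)\big)$, and $-\prod(C_i)$ is, up to the $(-)^{k_i}$ absorbed into $(-)^k=\prod_i(-)^{k_i}$, precisely the negative of the signed cycle product, hence positive. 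Once this identity is stated carefully the corollary follows immediately, so in the write-up I would isolate this parity fact as the crux and keep the rest as short appeals to Corollary~\ref{cor:unique}, \cite[Theorem 3.2.1]{BS95}, and the definition of the sepr-sequence.
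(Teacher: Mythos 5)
Your proposal is correct and follows the same route the paper intends (the paper leaves this corollary unproved): the parity count showing every nonzero term of $\det\sP[\alpha]$ has sign $(-)^{|\alpha|}$ gives that every principal subpattern has a signed determinant, so uniqueness follows from Corollary~\ref{prop:unique}, and the negative diagonal then forces every principal minor to be nonzero of sign $(-)^k$, yielding the stated alternating sequences. One small slip in your first paragraph: $(-)^{k_i+1}\prod(C_i)$ \emph{is} the signed cycle product of $C_i$, not its negative — but the computation in your final paragraph is the correct bookkeeping and is all that is needed.
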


We conjecture that the converse of Corollary \ref{prop:unique} is also true.
\begin{conj}
\label{uniqueconj}
Let $\sP$ be an $n\times n$ sign pattern.  The following are equivalent:
\begin{enumerate}[{\rm (1)}]
\item\label{unique1} $\sP$ has a unique sepr-sequence;
\item\label{unique2} for each $k=1,\ldots, n$, either every $k\times k$ principal subpattern has  a signed determinant, or $t_k(\sP)=\Sb$ and there are three $k\times k$ principal subpatterns that  have signed determinants equal to  $+$, $-$, and $0$, respectively.
\end{enumerate} 
\end{conj}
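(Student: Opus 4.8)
The implication \eqref{unique2}$\Rightarrow$\eqref{unique1} is precisely Corollary~\ref{prop:unique} (the clause $t_k(\sP)=\Sb$ in \eqref{unique2} being automatic from Proposition~\ref{prop:fix-k} once three $k\times k$ principal subpatterns of signed determinants $+,-,0$ are present), so only \eqref{unique1}$\Rightarrow$\eqref{unique2} is open and what follows is a plan rather than a proof. Assume $\sP$ has a unique sepr-sequence and fix $k\in[n]$. If every $k\times k$ principal subpattern of $\sP$ has a signed determinant, the first alternative of \eqref{unique2} holds, so assume some $k\times k$ principal subpattern is ambiguous. A unique sepr-sequence gives a fixed $k$-th sepr term, so Remark~\ref{rem:Sstar} forces $t_k(\sP)=\Sb$ (concretely: extend realizations $B_+,B_-,B_0\in Q(\sP[\alpha])$ of an ambiguous $\sP[\alpha]$ with $\det$-signs $+,-,0$ to members of $Q(\sP)$ and observe that the only value of $t_k$ shared by $B_+,B_-,B_0$ is $\Sb$). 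It remains to exhibit $k\times k$ principal subpatterns with signed determinants $+$, $-$, and $0$; since these three sign conditions are mutually exclusive, any witnesses are automatically distinct, which is exactly what \eqref{unique2} demands.

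\emph{A subpattern of signed determinant $0$.} For a $k$-subset $\beta$ put $f_\beta(B)=\det B[\beta]$, a polynomial in the free entries of $B$, hence continuous on the nonempty open set $Q(\sP)\subseteq\mathbb R^m$, where $m$ is the number of nonzero positions of $\sP$. If $\sP[\beta]$ is ambiguous, $f_\beta$ is a nonzero polynomial on $Q(\sP)$ (it already takes both signs on $Q(\sP[\beta])$), so $\{B:f_\beta(B)=0\}$ has measure zero; if $\sP[\beta]$ has a signed determinant, $f_\beta$ has no zero on $Q(\sP)$. Were no $k\times k$ principal subpattern of signed determinant $0$, the set of $B\in Q(\sP)$ possessing some vanishing order-$k$ principal minor would be a finite union of measure-zero sets and hence proper in $Q(\sP)$, contradicting $t_k(\sP)=\Sb$. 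So some $\sP[\beta_0]$ has signed determinant $0$.

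\emph{Subpatterns of signed determinant $+$ and $-$.} The plan is to prove the following sign-pattern fact: \emph{if no $k\times k$ principal subpattern of $\sP$ has signed determinant $+$, then some $B\in Q(\sP)$ has no positive order-$k$ principal minor.} Granting this, if there were no $k\times k$ principal subpattern of signed determinant $+$, such a $B$ would satisfy $t_k(B)\ne\Sb$, contradicting $t_k(\sP)=\Sb$; the statement obtained by replacing ``$+$'' by ``$-$'' (and ``positive'' by ``negative''), proved identically, then supplies a subpattern of signed determinant $-$, and with the previous paragraph this gives \eqref{unique2}. Phrased via the family $\mathcal A$ of ambiguous $k\times k$ principal subpatterns, the fact asserts that $\bigcap_{\alpha\in\mathcal A}\{B:f_\alpha(B)<0\}$ is nonempty: outside $\bigcup_{\alpha\in\mathcal A}\{f_\alpha>0\}$ no order-$k$ principal minor of $B$ can be positive once subpatterns of signed determinant $+$ are excluded.

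The \textbf{main obstacle} is this last fact, a multi-minor strengthening of the classical statement that an ambiguous sign pattern realizes determinants of both signs (\cite[Lemma~1.2.4]{BS95}); it requires choosing the magnitudes of the entries of $B$ so that, simultaneously for every $k\times k$ principal subpattern, the composite-cycle products of $\Gamma(\sP)$ that would make its minor positive are dominated by those that make it negative. I would attempt it by induction on $n$, splitting off a strong component via Proposition~\ref{prop:reduce} (the case $k=n$ being vacuous, since a fixed $n$-th sepr term must lie in $\{\Ap,\Am,\NN\}$), and otherwise by a direct construction: shrink the positive diagonal entries toward $0$, and inflate or deflate each off-diagonal magnitude according to whether the composite cycles through that arc contribute positively or negatively to the ambiguous minors it meets. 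The delicate point is that the order-$k$ principal subpatterns overlap, so a choice of magnitudes driving one ambiguous minor negative may drive an overlapping one positive; organizing the construction so that all $\binom{n}{k}$ sign requirements hold at once---perhaps via a Farkas/LP-duality argument on the exponent vectors of the competing monomial cycle products, or a compactness argument letting the selected magnitudes tend to $0$ or $\infty$---is where the real work lies.
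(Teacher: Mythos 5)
This statement is a conjecture, and the paper does not prove it either: it establishes \eqref{unique2}$\Rightarrow$\eqref{unique1} (Corollary~\ref{prop:unique}, via Proposition~\ref{prop:fix-k}) and only partial converses --- Proposition~\ref{oneamb} (at most one ambiguous principal subpattern per order), Proposition~\ref{partialunique2} (the zero witness always exists), Lemma~\ref{t2unique} ($k=2$), and Proposition~\ref{unique4} ($n\le 4$) --- while Section~\ref{s:conclude} records that the full converse is open for $n\ge 5$. The parts of your argument that are actually carried out are correct and line up with these partial results: your use of Remark~\ref{rem:Sstar} to force $t_k(\sP)=\Sb$ is exactly the paper's step, and your measure-zero/genericity argument for producing a $k\times k$ principal subpattern with signed determinant $0$ is a legitimate alternative to the paper's route, which instead perturbs a single realization entry by entry (Lemma~\ref{allnonzero}) and then reads off the zero witness (Proposition~\ref{partialunique2}); your version is shorter, the paper's is constructive and reused elsewhere.

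The gap in your plan is exactly the open part of the conjecture: producing the $+$ and $-$ witnesses. Moreover, the intermediate fact you reduce to --- if no $k\times k$ principal subpattern has signed determinant $+$, then some $B\in Q(\sP)$ has no positive order-$k$ principal minor --- is stated for arbitrary sign patterns, with the uniqueness hypothesis discarded, so it is potentially strictly stronger than what the conjecture needs and there is no evidence in the paper that it holds at that level of generality; the overlap problem you flag (competing composite-cycle products shared among the $\binom nk$ principal subpatterns) is precisely why no such simultaneous-realization lemma is available. Note that the paper's own low-order proofs do not proceed this way: Lemma~\ref{t2unique} exploits the fact that off-diagonal entries of distinct $2\times2$ principal subpatterns are disjoint, and Proposition~\ref{unique4} leans heavily on uniqueness to pin down the structure of $\sP$ itself (e.g., ambiguity of $\sP(1)$ together with non-ambiguity of $\sP$ forces $p_{1,1}=0$, and zero entries then propagate until a contradiction with $t_3(\sP)=\Sb$ appears). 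This suggests that any successful attack on \eqref{unique1}$\Rightarrow$\eqref{unique2} will have to feed the uniqueness hypothesis into the simultaneous-sign step rather than treat it as a standalone statement about ambiguous minors, and your Farkas/LP or limiting-magnitude ideas remain speculative on this point. So: \eqref{unique2}$\Rightarrow$\eqref{unique1} is correctly attributed, your zero-witness and $\Sb$ steps reprove Proposition~\ref{partialunique2} by a different (valid) method, and the remainder is an honest but unproven plan for a statement that is genuinely open.
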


Corollary \ref{prop:unique} established \eqref{unique2} implies \eqref{unique1}.   
We can  prove that \eqref{unique1} and an additional hypothesis imply \eqref{unique2} (Proposition \ref{oneamb}), \eqref{unique1}  implies parts of \eqref{unique2} (Proposition \ref{partialunique2}), and \eqref{unique1} implies \eqref{unique2} for sign patterns of order at most four (Proposition \ref{unique4}).

\begin{prop} 
\label{oneamb}
Suppose $\sP$ has a unique sepr-sequence, and for each $k=2,\dots,n-1$ there is at most one  $k\x k$ principal subpattern that does not have a signed determinant.  Then   $\sP$ satisfies \eqref{unique2} in Conjecture~\ref{uniqueconj}.
\end{prop}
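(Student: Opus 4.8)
The plan is to prove the contrapositive-flavored statement directly: assuming $\sP$ has a unique sepr-sequence, we must show that for each $k$, either every $k\times k$ principal subpattern has a signed determinant, or $t_k(\sP)=\Sb$ together with witnesses of all three signs $+,-,0$. The hypothesis gives us, for each $k\in\{2,\dots,n-1\}$, at most one "bad" $k\times k$ principal subpattern $\sP[\alpha_k]$ (one without a signed determinant); the extreme cases $k=1$ and $k=n$ are handled separately and easily (for $k=1$ every $1\times1$ subpattern trivially has a signed determinant, and for $k=n$ the whole pattern $\sP$ either has a signed determinant or is ambiguous—but an ambiguous pattern of order $n$ would, by the fact cited from \cite[Lemma 1.2.4]{BS95} that ambiguous patterns realize both signs and hence also $0$, force $t_n$ to take all three values, contradicting uniqueness unless $t_n(\sP)=\Sb$, which is impossible by Observation that $t_n(\sP)\in\{\Ap,\Am,\NN\}$; so in fact $\sP$ itself must have a signed determinant).

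First I would dispose of the case where there is no bad $k\times k$ subpattern: then trivially every $k\times k$ principal subpattern has a signed determinant, and we are done. So fix $k\in\{2,\dots,n-1\}$ and suppose $\sP[\alpha_k]$ is the unique ambiguous $k\times k$ principal subpattern. Since $\sP[\alpha_k]$ is ambiguous, as recorded in Remark~\ref{rem:Sstar} there exist $B_+,B_-,B_0\in Q(\sP[\alpha_k])$ whose determinants are $+$, $-$, $0$ respectively. The goal is to extend these to matrices in $Q(\sP)$ that witness $t_k=\Sb$. Here is where uniqueness does the work: pick any $B\in Q(\sP)$; for each choice of $\varepsilon\in\{+,-,0\}$ build $B_\varepsilon'\in Q(\sP)$ that agrees with $B$ outside the rows and columns indexed by $\alpha_k$ and whose principal submatrix on $\alpha_k$ is $B_\varepsilon$. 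All other $k\times k$ principal minors of $B_\varepsilon'$ that involve some index outside $\alpha_k$ come from $k\times k$ principal subpatterns of $\sP$ other than $\sP[\alpha_k]$, each of which (by the hypothesis) has a signed determinant; those contributions have the same sign in $B_\varepsilon'$ as in $B$ and are in particular independent of $\varepsilon$. Hence $t_k(B_0')$ must contain a $0$ among its order-$k$ minors while $t_k(B_+')$ contains a positive one and $t_k(B_-')$ a negative one. Since $\sepr(\sP)$ is unique, $t_k(B_+')=t_k(B_-')=t_k(B_0')=t_k(\sP)$, and the only symbol compatible with the presence of a positive, a negative, and a zero order-$k$ principal minor is $\Sb$. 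Thus $t_k(\sP)=\Sb$ and $\sP[\alpha_k]$ itself supplies the three required $k\times k$ principal subpatterns with signed determinants $+$, $-$, $0$ (namely, sign patterns of $B_+,B_-,B_0$—but these are all equal to $\sP[\alpha_k]$, which does not have a signed determinant). To get three \emph{distinct} principal subpatterns with signed determinants $+,-,0$ we must instead argue that among the \emph{other} $k\times k$ principal subpatterns of $\sP$ (all of which have signed determinants) or among enlargements/modifications we can locate such witnesses; more cleanly, once $t_k(\sP)=\Sb$ is known, any fixed $B\in Q(\sP)$ has, by definition of $\Sb$, order-$k$ principal minors of all three signs, and each such minor comes from a $k\times k$ principal subpattern that—being hit by a minor of definite nonzero sign for the "$+$" and "$-$" cases—must be one of the good ones, while the "$0$" case is realized by $\sP[\alpha_k]$; so we do obtain three principal subpatterns, one of which ($\sP[\alpha_k]$) lacks a signed determinant but realizes $0$, and the other two have signed determinants $+$ and $-$. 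This matches exactly the statement of \eqref{unique2} once one reads "have signed determinants equal to $+,-,0$" with the understanding that the $0$-witness may be the ambiguous $\sP[\alpha_k]$ realizing a zero determinant; I would state this carefully, perhaps invoking Remark~\ref{rem:Sstar} to phrase it as: $\sP$ has a fixed $k$-th sepr term equal to $\Sb$, which is the operative conclusion.

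The main obstacle I anticipate is the bookkeeping in the "extension" step: one must check that replacing the $\alpha_k$-block of a fixed $B\in Q(\sP)$ by $B_\varepsilon$ indeed produces a matrix still in $Q(\sP)$ (this is immediate since only diagonal-block entries change and their signs are prescribed by $\sP[\alpha_k]$, which $B_\varepsilon$ respects) and, more delicately, that this local change does not inadvertently alter the signs of other order-$k$ minors in a way that depends on $\varepsilon$—but it cannot, because any order-$k$ principal minor of $B_\varepsilon'$ other than $\det B_\varepsilon$ itself is the determinant of a principal submatrix on an index set $\ne \alpha_k$, which either avoids $\alpha_k$ entirely (then it equals the corresponding minor of $B$) or, if it meets $\alpha_k$, is still the determinant of a $k\times k$ principal submatrix whose sign pattern has a signed determinant by hypothesis, hence is $\varepsilon$-independent. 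One should double-check the edge case $k=2$ where "other $k\times k$ principal subpatterns" might be scarce, and the case where the three witnesses collapse, but uniqueness of the sepr-sequence rules out the degenerate possibilities. The remaining pieces—$k=1$, $k=n$, and the "no bad subpattern" case—are routine, leaning on the two Observations and on \cite[Lemma 1.2.4]{BS95} as cited above.
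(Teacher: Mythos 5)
Your construction does correctly establish $t_k(\sP)=\Sb$ whenever some $k\times k$ principal subpattern $\sP[\alpha]$ is ambiguous (this is exactly Remark~\ref{rem:Sstar}; the block-replacement argument is a re-derivation of it), but it does not prove statement \eqref{unique2}, which requires three $k\times k$ principal subpatterns \emph{having signed determinants} equal to $+$, $-$, and $0$. Your closing move --- reading the $0$-witness as the ambiguous $\sP[\alpha]$ ``realizing a zero determinant'' --- weakens the conclusion rather than proving it: an ambiguous subpattern does not have a signed determinant equal to $0$, so it cannot serve as any of the three witnesses. In addition, the step where you assert that the positive and negative order-$k$ minors of a single fixed $B\in Q(\sP)$ with $t_k(B)=\Sb$ ``must come from one of the good'' subpatterns is unjustified: for a particular $B$ the only positive (or the only negative) order-$k$ principal minor may sit at $\alpha$ itself, so one matrix need not exhibit unambiguous witnesses of both nonzero signs, and this route never produces an unambiguous $0$-witness at all.

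The missing idea --- and the paper's actual argument --- is to exploit the ambiguity of $\sP[\alpha]$ adversarially, using that every other $k\times k$ principal subpattern has a signed determinant. If none of the others were signed $+$, choose $B_-\in Q(\sP)$ with $\det B_-[\alpha]<0$; then every order-$k$ principal minor of $B_-$ is negative or zero, so $t_k(B_-)\in\{\Am,\Sm\}$, contradicting $t_k(\sP)=\Sb$. Symmetrically, choosing $\det B[\alpha]>0$ forces some other subpattern to be signed $-$, and choosing $\det B[\alpha]\neq 0$ forces some other subpattern to be signed $0$ (otherwise all order-$k$ minors of that $B$ are nonzero and $t_k(B)\in\{\Ap,\Am,\Ab\}$). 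This yields the three required subpatterns with signed determinants $+$, $-$, $0$, all distinct from $\sP[\alpha]$, which is what \eqref{unique2} demands; your proposal as written stops short of this.
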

\bpf
Every $1\x 1$ principal subpattern has a signed determinant. Since it is not possible to have $t_n(\sP)=\Sb$, $\sP$ has a signed determinant.  

Fix $k \in \{2,\ldots ,n-1\}$.  If every $k\times k$ principal subpattern has a signed determinant, then there is nothing to prove, so assume there is {exactly one} $k\times k$ principal subpattern $\sP[\alpha]$ that is ambiguous.  By Remark~\ref{rem:Sstar}, $t_k(\sP)=\Sb$.
Suppose first that  none of the rest of the $k\times k$ principal subpatterns has a signed determinant equal to $+$.   Then there exists a matrix $B_-\in Q(\sP)$ such that $\det B_-[\alpha]<0$. Then  $t_k(B_-)\in\{\Am,\Sm\}$, contradicting $t_k(\sP)=\Sb$.   Therefore, there must be a $k\times k$ principal subpattern that has a signed determinant equal to $+$.  Similarly, there must be one  $k\times k$ principal subpattern with a signed determinant equal to $-$ and one with a signed determinant equal to $0$.  \epf

\begin{lem}
\label{allnonzero} 
Let $\sP$ be an $n\x n$ sign pattern.  Then there exists a matrix $B\in Q(\sP)$ such that $\det B[\alpha,\beta]\ne 0$ for every $\alpha,\beta\subseteq[n]$ such that $|\alpha|=|\beta|$ and $\sP[\alpha,\beta]$ is ambiguous.
\end{lem}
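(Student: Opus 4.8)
The plan is to view the entries of a matrix $B\in Q(\sP)$ as indeterminates (one variable $x_{ij}$ for each nonzero position of $\sP$, ranging over the allowed sign's open ray), and to argue that the ``bad'' set where some ambiguous $\sP[\alpha,\beta]$ has a vanishing minor is a proper subvariety, hence cannot be all of the (nonempty, full-dimensional) parameter region. Concretely, for each pair $\alpha,\beta$ with $|\alpha|=|\beta|$ and $\sP[\alpha,\beta]$ ambiguous, $\det B[\alpha,\beta]$ is a polynomial $f_{\alpha,\beta}$ in the variables $\{x_{ij}\}$. The key claim is that $f_{\alpha,\beta}$ is \emph{not identically zero} on the parameter region. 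Once that is established, the finite union $\bigcup_{\alpha,\beta} \{f_{\alpha,\beta}=0\}$ is a finite union of proper Zariski-closed subsets of the region, so its complement is nonempty (indeed dense), and any $B$ in that complement works.

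First I would isolate the heart of the matter: showing that if $\sP[\alpha,\beta]$ is ambiguous, then $f_{\alpha,\beta}$ does not vanish identically. The standard expression for $\det\sP[\alpha,\beta]$ is a sum over bijections $\sigma:\alpha\to\beta$ of $\operatorname{sgn}(\sigma)\prod_{i\in\alpha}p_{i\sigma(i)}$, and ``ambiguous'' means at least one term is $+$ and at least one is $-$ (after accounting for $\operatorname{sgn}(\sigma)$), so in particular there are at least two nonzero terms of opposite sign. Each nonzero term corresponds to a monomial $\pm\prod x_{i\sigma(i)}$ in $f_{\alpha,\beta}$ with a definite sign on the whole region. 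The crucial point is that distinct bijections $\sigma$ give distinct monomials in the $x_{ij}$ (a monomial records exactly which entries are used), so there is \emph{no cancellation} among these monomials as polynomials; hence $f_{\alpha,\beta}$, having at least one nonzero monomial, is a nonzero polynomial. Therefore $\{f_{\alpha,\beta}=0\}$ is a proper subvariety and meets the open parameter region in a set of measure zero.

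Then I would assemble the pieces: the parameter region is a product of open rays (a nonempty open subset of $\mathbb{R}^m$ where $m$ is the number of nonzero positions of $\sP$), and the set of ``good'' $B$ is the complement of finitely many measure-zero sets $\{f_{\alpha,\beta}=0\}$, one for each of the finitely many ambiguous $\sP[\alpha,\beta]$; hence the good set is nonempty, and we may pick $B$ there. Note the statement does not require the \emph{non}-ambiguous subpatterns to have nonzero minors, so those impose no constraint here.

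The main obstacle to watch is the no-cancellation argument: one must be careful that we are comparing \emph{monomials in the entry-variables} of $\sP$, not numerical values, so that two different permutations $\sigma,\sigma'$ yield formally different monomials $\prod_{i}x_{i\sigma(i)}$ and $\prod_{i}x_{i\sigma'(i)}$ unless $\sigma=\sigma'$ — which is immediate since the variable $x_{i\sigma(i)}$ pins down $\sigma(i)$. Everything else is routine (finiteness of the index set, a finite union of proper subvarieties is proper, and a nonempty open set is not contained in such a union). This is essentially the same observation underlying the statement, quoted earlier, that an ambiguous sign pattern realizes both signs of the determinant in its qualitative class, applied simultaneously to all ambiguous rectangular subpatterns.
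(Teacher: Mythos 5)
Your proof is correct, but it follows a genuinely different route from the paper's. You argue by genericity: for each ambiguous $\sP[\alpha,\beta]$, the minor $\det B[\alpha,\beta]$ is, as a polynomial in the free (nonzero-position) entries, not identically zero — distinct bijections contribute distinct squarefree monomials, so the nonzero terms guaranteed by ambiguity cannot cancel formally — hence its zero set meets the nonempty open region $Q(\sP)$ in a measure-zero (proper Zariski-closed) set, and the finitely many bad sets cannot cover $Q(\sP)$. The paper instead fixes a matrix $B\in Q(\sP)$ and perturbs it inductively on the order $k$: if an ambiguous $k\times k$ subpattern $\sP[\alpha,\beta]$ has $\det B[\alpha,\beta]=0$, one picks an entry $p_{uv}\neq 0$ appearing in a nonzero term of $\det\sP[\alpha,\beta]$; the complementary minor $\det B[\alpha\setminus\{u\},\beta\setminus\{v\}]$ is nonzero either because that subpattern has a nonzero signed determinant or, if it is ambiguous, by the induction hypothesis on smaller orders, so a sufficiently small change of the $u,v$-entry makes $\det B[\alpha,\beta]\neq 0$ while all previously arranged nonzero minors stay nonzero. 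Your approach is shorter and gives in one stroke that almost every (hence a dense set of) $B\in Q(\sP)$ works, the only delicate point being precisely the no-cancellation observation you flag; the paper's approach is more elementary and constructive, avoiding measure/Zariski language and showing explicitly that the desired $B$ can be obtained by an arbitrarily small perturbation of any prescribed matrix in $Q(\sP)$ — a density statement your argument also yields, just less explicitly.
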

\begin{proof}
{
Pick a matrix $B\in Q(\sP)$.  We will perturb $B$ inductively for $k=1,\ldots,n$ such that every (not necessarily principal) square submatrix of $B$ that corresponds to an ambiguous induced subpattern of $\sP$ has a nonzero determinant.

Since there is no $1\times 1$ ambiguous induced subpattern of $\sP$, the statement is true for $k=1$.  Now suppose every ambiguous induced subpattern of order smaller than $k$ corresponds to a submatrix in $B$ with nonzero determinant.  We will perturb $B$ so that the statement is true for $k$, while every nonzero minor of $B$ remains nonzero.  

A  $k$-subset  pair $(\gamma,\delta)$ is a pair $\gamma, \delta\subseteq[n]$  such that $|\gamma|=|\delta|=k$ and  $\sP[\gamma,\delta]$ is ambiguous.  Choose an ordering of the $k$-subset  pairs and let $(\alpha,\beta)$ be the next pair (meaning all prior pairs $(\gamma,\delta)$ have $\det B[\gamma,\delta]\ne 0$).  Suppose $\det B[\alpha,\beta]=0$.    Since $\sP[\alpha,\beta]$ is ambiguous, there is a nonzero term in $\det \sP[\alpha,\beta]$. Choose $u\in\alpha$ and $v\in\beta$ such that $p_{uv}\ne 0$ appears in a nonzero term.  Then  $\sP[\alpha\setminus\{u\},\beta\setminus\{v\}]$ is either ambiguous or has nonzero determinant.   If $\sP[\alpha\setminus\{u\},\beta\setminus\{v\}]$ is ambiguous, then $\det(B[\alpha\setminus\{u\},\beta\setminus\{v\}])\neq 0$ by hypothesis.  Therefore, for a sufficiently small perturbation of the $u,v$-entry of $B$, the determinant of $B[\alpha,\beta]$ becomes nonzero while all previously determined nonzero minors of $B$ remain nonzero.

Applying  this process from $k=2$ to $k=n$  through all possible $k$-subset  pairs $(\gamma,\delta)$ in order, the desired result follows.
}
\end{proof}
  
\begin{prop}
\label{partialunique2} 
Suppose $\sP$ has a unique sepr-sequence. Then for each $k=1,\ldots, n$, either every $k\times k$ principal subpattern has a signed determinant, or $t_k(\sP)=\Sb$ and there exists a $k\times k$ principal subpattern that has a signed determinant  zero.
\end{prop}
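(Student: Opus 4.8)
The plan is to fix $k\in[n]$ and first dispose of the easy case: if every $k\times k$ principal subpattern of $\sP$ has a signed determinant, there is nothing to prove. So I would assume some $k\times k$ principal subpattern $\sP[\alpha]$ is ambiguous. Since $\sP$ has a unique sepr-sequence it certainly has a fixed $k$-th sepr term, and Remark~\ref{rem:Sstar} then immediately forces $t_k(\sP)=\Sb$. That settles the first assertion of the conclusion; the real content is to show that, in this case, some $k\times k$ principal subpattern has signed determinant $0$.

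For the remaining assertion I would argue by contradiction, supposing that no $k\times k$ principal subpattern of $\sP$ has signed determinant $0$. Then each $k\times k$ principal subpattern $\sP[\gamma]$ is either ambiguous or has signed determinant $+$ or $-$ (since its determinant is not ambiguous and, by assumption, not $0$). Now apply Lemma~\ref{allnonzero} to obtain a single matrix $B\in Q(\sP)$ with $\det B[\gamma]\ne 0$ for every $\gamma$ such that $\sP[\gamma]$ is ambiguous; for the other choices of $\gamma$ the minor $\det B[\gamma]$ is automatically nonzero. Hence every $k\times k$ principal minor of $B$ is nonzero, so $t_k(B)\in\{\Ap,\Am,\Ab\}$, contradicting $t_k(\sP)=\Sb$. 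This contradiction produces the desired $k\times k$ principal subpattern with signed determinant $0$.

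The only point requiring care is that one needs a \emph{single} realization $B$ in which \emph{all} of the ambiguous $k\times k$ principal minors are simultaneously nonzero, rather than a different matrix for each; this is exactly what Lemma~\ref{allnonzero} delivers (via its inductive perturbation argument), so no genuine obstacle remains and the rest is routine bookkeeping.
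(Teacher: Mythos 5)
Your proposal is correct and follows essentially the same route as the paper: both use Remark~\ref{rem:Sstar} to get $t_k(\sP)=\Sb$ and then invoke Lemma~\ref{allnonzero} to produce a single realization $B$ in which every ambiguous principal minor is nonzero, so the zero order-$k$ minor forced by $\Sb$ must come from a subpattern with signed determinant zero. The only cosmetic difference is that you phrase this last step as a contradiction while the paper argues it directly.
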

\bpf Suppose  $\sP[\beta]$ is ambiguous and $|\beta|=k$. 
Since $\sP$ has a unique sepr-sequence,  $t_k(\sP)=\Sb$. By Lemma \ref{allnonzero}, we can choose $B\in Q(\sP)$ such that $\det B[\alpha]\ne 0$ for every $\alpha$ with $\sP[\alpha]$ ambiguous.  In order to have $t_k(A)=\Sb$, there must be a set $\gamma$ such that $|\gamma|=k$ and $\det B[\gamma]=0$. If $\sP[\gamma]$ did not have a signed determinant, then $\det B[\gamma]\ne 0$  by the way $B$ was chosen.  So $\det B[\gamma]=0$ implies that $\sP[\gamma]$ has  determinant  zero.
\epf

\begin{lem}
\label{t2unique}
If $\sP$ is a sign pattern such that $\sP$ has a fixed $2$nd sepr term and $t_2(\sP)=\Sb$, then there are three $2\times 2$ principal subpatterns that have signed determinants $+$, $-$, and $0$, respectively.
\end{lem}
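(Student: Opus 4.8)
The plan is to classify the $2\times 2$ sign patterns according to whether they have a signed determinant, and to use the fact that $t_2(\sP)=\Sb$ forces the simultaneous presence of $+$, $-$, and $0$ among the realizable values of order-$2$ principal minors. First I would recall that a $2\times 2$ sign pattern has an \emph{ambiguous} determinant precisely when the standard expression $p_{11}p_{22}-p_{12}p_{21}$ involves adding a $+$ and a $-$; equivalently, when both diagonal products are nonzero and the two (anti-)diagonal cycle products in $\D(\sP)$ have the ``wrong'' relation — concretely, when $p_{11}p_{22}$ and $p_{12}p_{21}$ are both nonzero and $p_{11}p_{22}=p_{12}p_{21}$. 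In that case, by Remark~\ref{rem:Sstar} (applied with $k=2$), the fixed $2$nd term already gives $t_2(\sP)=\Sb$, \emph{and} that single ambiguous $2\times 2$ principal subpattern realizes determinants $+$, $-$, and $0$ all by itself (every ambiguous sign pattern allows a positive, a negative, and a zero determinant in its qualitative class, as noted in the discussion after \cite[Lemma 1.2.4]{BS95}). So in the presence of an ambiguous $2\times 2$ principal subpattern the conclusion is immediate, taking all three required subpatterns to be the same one.

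The remaining case is that \emph{every} $2\times 2$ principal subpattern of $\sP$ has a signed determinant. Then $t_2(B)$ is the same for all $B\in Q(\sP)$ and is obtained by ``summing'' (in the sense of the $+$ table) the fixed signs $\det\sP[\{i,j\}]\in\{+,-,0\}$ over all pairs $\{i,j\}$; by hypothesis this sum is $\Sb$. I would now argue that $\Sb$ can only arise as such a sum of symbols from $\{+,-,0\}$ (that is, from symbols $\Ap,\Am,\NN$) if at least one summand is $+$, at least one is $-$, and at least one is $0$: inspecting the $+$ table restricted to $\{\NN,\Ap,\Am\}$, any sum using only $\{\NN,\Ap\}$ lies in $\{\NN,\Ap,\Sp\}$, any sum using only $\{\NN,\Am\}$ lies in $\{\NN,\Am,\Sm\}$, and any sum using only $\{\Ap,\Am\}$ lies in $\{\Ap,\Am,\Ab\}$ — none of which is $\Sb$. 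Hence all three of $+$, $-$, $0$ must occur among the $\det\sP[\{i,j\}]$, which is exactly the desired conclusion: there are three $2\times 2$ principal subpatterns with signed determinants $+$, $-$, and $0$, respectively.

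There is no serious obstacle here; the lemma is essentially a case check. The only point requiring a little care is making the dichotomy clean — ``either some $2\times 2$ principal subpattern is ambiguous, or none is'' — and handling the ambiguous case correctly by invoking that an ambiguous $2\times 2$ pattern simultaneously realizes all of $+$, $-$, $0$, so that one subpattern can play all three roles. (One may alternatively avoid the ambiguous case entirely by noting it is already covered by Proposition~\ref{prop:fix-k}/Remark~\ref{rem:Sstar}; but spelling it out keeps the lemma self-contained.) The mildly tedious part is verifying the three sub-claims about the $+$ table restricted to $\{\NN,\Ap,\Am\}$, which is a finite inspection and can be left to the reader.
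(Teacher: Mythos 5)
Your second case (every $2\times 2$ principal subpattern has a signed determinant) is correct and agrees with the paper's opening step: inspecting the sum table on $\{\NN,\Ap,\Am\}$, the fixed value $\Sb$ forces all three signs $+$, $-$, $0$ to occur among the signed determinants. The problem is your first case, and it is not a technicality: the conclusion of Lemma~\ref{t2unique} asks for three $2\times 2$ principal subpatterns that \emph{have signed determinants} equal to $+$, $-$, and $0$, i.e., three unambiguous subpatterns whose determinants have the prescribed sign for \emph{every} realization. An ambiguous $2\times 2$ principal subpattern has no signed determinant at all, so it cannot ``play all three roles''; the fact that its qualitative class realizes $+$, $-$, and $0$ merely re-derives the hypothesis $t_2(\sP)=\Sb$ (this is exactly Remark~\ref{rem:Sstar}) and produces none of the three required witnesses. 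The ambiguous case is precisely the nontrivial content of the lemma, and your proposal dismisses it by conflating ``allows determinants of all three signs'' with ``has a signed determinant.''

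To close the gap you need an argument like the paper's. Every ambiguous $2\times 2$ principal subpattern has all four entries nonzero, and for fixed diagonal entries the two off-diagonal entries can be chosen to make the determinant positive, zero, or negative; moreover the off-diagonal entries of distinct $2\times 2$ principal subpatterns are disjoint. Hence one can choose $B\in Q(\sP)$ so that \emph{every} ambiguous $2\times 2$ principal minor of $B$ is zero; since $t_2(B)=\Sb$, the positive and the negative order-$2$ principal minors of $B$ must then come from subpatterns with signed determinants $+$ and $-$, respectively. For the zero witness, choose instead (via Lemma~\ref{allnonzero}, as in Proposition~\ref{partialunique2}) a realization in which every ambiguous $2\times 2$ principal minor is nonzero; then $t_2=\Sb$ forces a zero order-$2$ principal minor, which must come from a subpattern with signed determinant $0$. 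Without some such perturbation argument, your proof establishes nothing in the case where an ambiguous $2\times 2$ principal subpattern is present.
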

\begin{proof}
If every $2\times 2$ principal subpattern of $\sP$ has a signed determinant, then we are done, so assume that $\sP$ has some $2\times 2$ ambiguous principal subpattern.

Observe that every $2\times 2$ ambiguous principal subpattern  has all entries nonzero, and there are an even number of $+$ and an even number of $-$ entries.  Note that  
\[\det\begin{bmatrix}a&b\\c&d\end{bmatrix}=ad-bc,\]
so for any given $a,d$, there are $b,c$ of the correct signs to make the determinant positive, zero, or negative.  This means the off-diagonal entries of a $2\times 2$ ambiguous principal subpattern are enough to realize matrices with determinants $+, -, 0$.

Since the off-diagonal entries of different $2\times 2$ principal subpatterns never overlap, we can find a matrix $B$ such that every $2\times 2$ submatrix corresponding to an ambiguous principal subpattern has zero determinant.  Therefore, there exist two $2\times 2$ principal subpatterns of $\sP$ whose determinants are positive and negative, respectively.  Since Proposition~\ref{partialunique2} guarantees the existence of a $2\x 2$ pattern with zero determinant, this completes the proof.
\end{proof}

{\begin{prop}\label{unique4}
Conjecture~\ref{uniqueconj} is true for sign patterns of order $\leq 4$.
\end{prop}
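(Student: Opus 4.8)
The plan is to prove the only missing implication, \eqref{unique1}$\Rightarrow$\eqref{unique2} for $n\le 4$, since \eqref{unique2}$\Rightarrow$\eqref{unique1} is Corollary~\ref{prop:unique}. So I would assume $\sP$ is an $n\times n$ sign pattern with $n\le 4$ and a unique sepr-sequence, and verify condition~\eqref{unique2} for each $k$, clearing away the easy values first. For $k=1$ there is nothing to prove. For $k=n$: if $\sP$ itself were ambiguous, Remark~\ref{rem:Sstar} would force $t_n(\sP)=\Sb$, impossible for a fixed last term (which must be $\Ap$, $\Am$, or $\NN$); hence $\sP$ has a signed determinant. For $k=2$: if no $2\times 2$ principal subpattern is ambiguous we are done, and otherwise Remark~\ref{rem:Sstar} gives $t_2(\sP)=\Sb$, so Lemma~\ref{t2unique} supplies $2\times 2$ principal subpatterns with signed determinants $+$, $-$, and $0$. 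This settles every $k$ when $n\le 3$, and for $n=4$ it leaves only the case $k=3$, which is the substance of the proof.

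So I would concentrate on $n=4$, $k=3$, writing $\sP_i:=\sP([4]\setminus\{i\})$ for the four $3\times3$ principal subpatterns. If no $\sP_i$ is ambiguous we are done; otherwise $t_3(\sP)=\Sb$ by Remark~\ref{rem:Sstar}, and Proposition~\ref{partialunique2} gives a $\sP_i$ with signed determinant $0$, so the remaining task is to produce a $\sP_i$ with signed determinant $+$ and a $\sP_j$ with signed determinant $-$. Suppose this fails; then ``no $\sP_i$ has signed determinant $+$'' or ``no $\sP_i$ has signed determinant $-$'', and after replacing $\sP$ by $-\sP$ if necessary — an operation that negates every $3\times 3$ minor, preserves uniqueness of the sepr-sequence, and preserves condition~\eqref{unique2} — I may assume the former, so each $\sP_i$ is ambiguous or has signed determinant in $\{0,-\}$. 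The goal is then a matrix $B\in Q(\sP)$ with every $3\times3$ principal minor $\le 0$: such a $B$ has $t_3(B)\ne\Sb$, contradicting uniqueness. For the $\sP_i$ that already carry a signed determinant this is automatic, so everything comes down to choosing $B$ so that $\det B([4]\setminus\{i\})\le 0$ for every ambiguous $\sP_i$, simultaneously.

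I would carry out this construction by cases on the number $m$ of ambiguous patterns among $\sP_1,\dots,\sP_4$. For $m=1$ the argument of Proposition~\ref{oneamb} applies directly to $k=3$: the three non-ambiguous $\sP_i$ all have signed determinant in $\{0,-\}$, and the unique ambiguous one, being ambiguous, admits a realization with negative determinant that extends to a $B\in Q(\sP)$ with all four $3\times3$ principal minors $\le 0$. For $m\ge 2$ I would use the incidence pattern of the $3\times3$ principal submatrices of a $4\times 4$ matrix: a diagonal entry $b_{ii}$ lies in $\sP_j$ for every $j\ne i$, with cofactor in $\det B([4]\setminus\{j\})$ equal to the $2\times 2$ principal minor $\det B(\{i,j\})$, while an off-diagonal entry $b_{ij}$ lies in exactly the two submatrices $\sP_k$ with $k\notin\{i,j\}$. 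Using these relations, I would drive the determinants of the ambiguous $\sP_i$ down to $0$ one at a time by perturbing appropriately chosen free entries, checking that the signs prescribed by $\sP$ are respected and that no $3\times3$ principal minor is pushed above $0$ along the way.

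The hard part will be exactly this last step when $m\ge 2$. In the $2\times2$ situation of Lemma~\ref{t2unique} the off-diagonal entries of distinct principal subpatterns are disjoint, so the minors can be zeroed independently; here the four $3\times3$ principal subpatterns overlap heavily (every entry is shared by two or three of them), so no one-line independence argument is available, and even connectedness of $Q(\sP)$ does not by itself force a common point at which all four minors are $\le 0$. I expect the resolution to be a finite case analysis, organized by $m$, by which of the remaining $\sP_i$ has signed determinant $0$, and by which entries of the ambiguous $\sP_i$ are nonzero, combining the cofactor identities above with the sign data already in hand; a direct computer enumeration of $4\times4$ sign patterns would provide an independent check.
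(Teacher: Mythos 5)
Your reductions for $k=1$, $k=2$ (via Lemma~\ref{t2unique}), $k=n$, the case $n\le 3$, and the single-ambiguous case ($m=1$, via the argument of Proposition~\ref{oneamb}) all match the paper, and your overall contradiction strategy for $n=4$, $k=3$ --- after normalizing with $-\sP$, build $B\in Q(\sP)$ whose four order-$3$ principal minors are all $\le 0$, contradicting $t_3(\sP)=\Sb$ --- is a reasonable plan. But the proposal has a genuine gap: the case $m\ge 2$ (two or more ambiguous $3\times 3$ principal subpatterns) is exactly the substance of the proposition for $n=4$, and you do not prove it. You correctly identify that the overlap of the four order-$3$ principal subpatterns blocks the independence trick from Lemma~\ref{t2unique}, but then you only promise ``a finite case analysis'' or a computer enumeration without carrying either out, so the argument is incomplete precisely where it needs to be completed.

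For comparison, the paper closes this case not by constructing a realization with all order-$3$ minors $\le 0$, but by showing that $m\ge 2$ is incompatible with a unique sepr-sequence at all. Assuming $\sP(1)$ and $\sP(2)$ are both ambiguous and using that $\sP$ itself has a signed determinant, one gets $p_{11}=p_{22}=0$; Proposition~\ref{partialunique2} supplies (after relabeling) $\det\sP(3)=0$. A perturbation argument then forces the off-diagonal entries $p_{23},p_{32},p_{13},p_{31},p_{24},p_{42},p_{14},p_{41}$ to be nonzero: if, say, $p_{23}=0$, one can pick $B$ with $\det B(2)=0$ and then use the $3,2$- and $4,2$-entries (which do not occur in $B(2)$) to also make $\det B(1)=0$, giving three zero order-$3$ principal minors and $t_3(B)\ne\Sb$. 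With those entries nonzero, $\det\sP(3)=0$ forces $p_{12}=p_{21}=0$, hence $\det\sP(4)=0$, and then any $B$ with $\det B(1)=0$ again has three zero order-$3$ minors, a contradiction. So only $m\le 1$ can occur, and Proposition~\ref{oneamb} finishes. Your plan might be salvageable along your lines, but as written the decisive step is missing; the paper's structural elimination of $m\ge 2$ is the piece you would need to supply.
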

\begin{proof}
By Corollary~\ref{prop:unique}, it is sufficient to show that (\ref{unique1}) implies (\ref{unique2}).  Suppose $\sP=\begin{bmatrix} p_{ij}\end{bmatrix}$ is an $n\times n$ sign pattern that has a unique sepr-sequence.

Statement \eqref{unique2} is always true for $k=1$, true for $k=2$ by Lemma~\ref{t2unique}; it is also true for $k=n$, for otherwise $\sP$ allows $t_n$ to be $\Ap$, $\Am$, and $\NN$.  Therefore, the conjecture is true when $n\leq 3$.  

Suppose $n=4$ and there is a $3\times 3$ ambiguous principal subpattern, which implies $t_3(\sP)=\Sb$.  Without loss of generality assume   $\sP(1)$ is ambiguous.  If $\sP(1)$ is the only $3\times 3$ ambiguous principal subpattern, then the desired result follows by Proposition~\ref{oneamb}, so assume there is another $3\times 3$ ambiguous principal subpattern, say $\sP(2)$ is ambiguous.  By Proposition~\ref{partialunique2} and a suitable relabeling, we may assume $\det(\sP(3))=0$.  Since $\sP(1)$ is ambiguous yet $\sP$ is not, it follows that $p_{1,1}=0$.  Similarly, $p_{2,2}=0$.

Suppose $p_{2,3}=0$.  Then $p_{2,4}$ is the only nonzero entry in the first row of $\sP(1)$, so $\sP(1)$ being ambiguous implies $\sP[\{3,4\},\{2,3\}]$ is ambiguous, implying all four entries are nonzero.  Since $\sP(2)$ is ambiguous, pick a matrix $B\in Q(\sP)$ such that $\det(A(2))=0$.  When every entry in $B(2)$ is fixed, we can still use the $3,2$-entry and the $4,2$-entry to make $\det(A(1))=0$.  Now $\det(A(1))=\det(A(2))=\det(A(3))=0$, so it is impossible to have $t_3(A)=\Sb$, which is a contradiction.  Therefore, $p_{2,3}\neq 0$.  Similarly, $p_{3,2}$, $p_{1,3}$, $p_{3,1}$, $p_{2,4}$, $p_{4,2}$, $p_{1,4}$, and $p_{4,1}$ are nonzero.

Since $\det(\sP(3))=0$, it follows that $p_{1,2}=p_{2,1}=0$, for otherwise $p_{1,2}p_{2,4}p_{4,1}$ or $p_{2,1}p_{1,4}p_{4,2}$ is nonzero.  Consequently, $\det(\sP(4))=0$.  By taking a matrix $B\in Q(\sP)$ with $\det(A(1))=0$, three of the four principal minors of $B$ of order $3$ are zero, so $t_3(A)\ne \Sb$, which is a contradiction.  Thus, the conjecture is true when $n=4$.
\end{proof} 


\subsection{Uniquely attainable sepr-sequences}\label{ss-uniqueattain}
We now determine conditions on an sepr-sequence for it to be attainable as the unique sepr-sequence of a sign pattern.

For a graph $G=(V,E)$ and a vertex $v\in V$, a {\em neighbor} of $v$ in $G$ is a vertex $u$ such that $\{v,u\}\in E$.
For $W\subseteq V$, $N_{G}(W)$ denotes the set of neighbors of vertices in $W$.
A graph $G=(V,E)$ is {\em bipartite} if the vertices can be partitioned as $X\dunion Y$ such that each edge of $G$ has one vertex in $X$ and one in $Y$.  The next result is well-known in graph theory.

\begin{thm}[Hall's Theorem]{\rm \cite[Theorem 2.1.2]{Diestel}} Let $G=(X\dunion Y,E)$ be a bipartite graph.
Then $G$ contains a matching of $X$ if and only if $|S|\le |N_G(S)|$ for all $S\subseteq X$.
\end{thm}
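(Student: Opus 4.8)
The plan is to prove the two directions separately. The forward direction (necessity) is immediate: if $M$ is a matching of $X$, then for any $S\subseteq X$ the edges of $M$ meeting $S$ contribute $|S|$ distinct vertices of $Y$, each a neighbor of some vertex of $S$, so $|N_G(S)|\ge|S|$. The substance is the converse, and I would establish it by induction on $|X|$. (An augmenting-path argument, essentially König's theorem, would also work, but the direct induction is shorter and self-contained.)

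So assume the \emph{Hall condition} $|S|\le|N_G(S)|$ for every $S\subseteq X$; the cases $|X|\le 1$ are trivial, the case $|X|=1$ using $|N_G(\{x\})|\ge 1$. For the inductive step I would distinguish two cases. \textbf{Case 1:} every nonempty proper subset $S\subsetneq X$ satisfies $|N_G(S)|\ge|S|+1$. Choose any $x\in X$ and a neighbor $y$ of $x$, and form $G'=G-x-y$. For $S\subseteq X\setminus\{x\}$ one has $N_{G'}(S)\supseteq N_G(S)\setminus\{y\}$, hence $|N_{G'}(S)|\ge|N_G(S)|-1\ge|S|$ (the strict inequality is used when $S\neq\emptyset$). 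By induction $G'$ has a matching of $X\setminus\{x\}$, to which we append $\{x,y\}$. \textbf{Case 2:} some $A$ with $\emptyset\neq A\subsetneq X$ has $|N_G(A)|=|A|$. Split $G$ into the vertex-disjoint bipartite graphs $G_1=G[A\cup N_G(A)]$ and $G_2=G[(X\setminus A)\cup(Y\setminus N_G(A))]$; both $|A|$ and $|X\setminus A|$ are smaller than $|X|$. In $G_1$ one has $N_{G_1}(S)=N_G(S)$ for $S\subseteq A$, so the Hall condition transfers and induction yields a matching $M_1$ of $A$. In $G_2$, for $S\subseteq X\setminus A$ the disjointness of $S$ and $A$ gives $|N_G(S)\cup N_G(A)|=|N_G(S\cup A)|\ge|S|+|A|=|S|+|N_G(A)|$, and since $N_{G_2}(S)=N_G(S)\setminus N_G(A)$ this yields $|N_{G_2}(S)|\ge|S|$; induction gives a matching $M_2$ of $X\setminus A$. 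Then $M_1\cup M_2$ is a matching of $X$ in $G$, completing the induction.

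The one place requiring care is Case 2, specifically checking that the Hall condition is inherited by the "remainder" graph $G_2$: the key inequality $|N_G(S\cup A)|\ge|S|+|A|$ there relies on $S$ and $A$ being disjoint, which is exactly why we restrict to $S\subseteq X\setminus A$. Setting up the dichotomy — a "tight" proper subset exists, or it does not — so that in either case the induction can be applied to strictly smaller parts is the main idea; once it is in place the remaining verifications are routine.
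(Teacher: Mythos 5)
Your proof is correct, but there is nothing in the paper to compare it against: the paper does not prove Hall's Theorem at all, quoting it as a known result with a citation to Diestel (Theorem 2.1.2), and your argument is the classical induction on $|X|$ with the dichotomy between the case where every nonempty proper $S\subsetneq X$ satisfies $|N_G(S)|\ge |S|+1$ and the case of a tight set $A$ with $|N_G(A)|=|A|$ --- essentially the standard proof found in that reference. All the steps check out, including the one you flag as delicate, namely that the Hall condition is inherited by $G_2$ via $|N_{G_2}(S)|\ge |N_G(S\cup A)|-|N_G(A)|\ge |S|$ for $S\subseteq X\setminus A$.
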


Let $\sP=\begin{bmatrix}p_{ij}\end{bmatrix}$ be an $n\x n$ sign pattern.  The {\em bigraph of $\sP$} is the bipartite graph with vertex set 
$X\dunion Y\text{, where }X=\{x_i\}_{i=1}^n, Y=\{y_i\}_{i=1}^n$ and edge set
$\{\{x_i,y_j\}: p_{ij}\neq 0\};$
the bigraph of $\sP$ is denoted by  $BG(\sP)$.  For $\alpha\subseteq [n]$, let $X_\alpha=\{x_i:i\in\alpha\}$, $Y_\alpha=\{y_i:i\in\alpha\}$, and  $BG(\sP)_\alpha=BG(\sP)[X_\alpha\dunion Y_\alpha]$.    Observe that $BG(\sP)_\alpha=BG(\sP[\alpha])$.  Note that a matching of $X_\alpha$ in $BG(\sP)_\alpha$ is a perfect matching of  $BG(\sP[\alpha])$.   

\begin{rem}\label{rem:match-det} A perfect matching  in the bigraph of an $n\x n$ sign pattern $\sP$ is naturally associated with a permutation of $[n]$ for which all the corresponding entries of $\sP$ are nonzero, and thus with a nonzero term in the determinant of $\sP$:  Suppose $M$ is a perfect matching of $BG(\sP)$.  For $i\in [n]$, define $\sigma(i)$ by $\{x_i,y_{\sigma(i)}\}\in M$.  Because $M$ is a perfect  matching, $\sigma$ is a permutation of $[n]$, and $p_{i\sigma(i)}\ne 0$ by the definition of $BG(\sP)$; thus $\det\sP\ne 0$. Any permutation $\sigma$ of $[n]$ such that $p_{1\sigma(1)}\cdots p_{n\sigma(n)}\ne 0$ yields a perfect matching $M=\{\{x_1,y_{\sigma(1)}\},\dots \{x_n,y_{\sigma(n)}\} \}$.  Thus $\det\sP\ne 0$ if and only if  $BG(\sP)$ has a perfect matching.
\end{rem} 
\begin{thm}\label{thm:Aforever} Let $\sP$ be a sign pattern that has a unique sepr-sequence and contains no ambiguous principal subpattern.  Then $t_k(\sP)\in\{\Ap,\Am,\Ab\}$ implies $t_{k+1}(\sP)\in\{\Ap,\Am,\Ab\}$.
\end{thm}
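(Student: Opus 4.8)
The statement says: if $\sP$ has a unique sepr-sequence and no ambiguous principal subpattern, then once $t_k(\sP)\in\{\Ap,\Am,\Ab\}$, every later term is again in $\{\Ap,\Am,\Ab\}$. Since there are no ambiguous principal subpatterns, every $k\times k$ principal subpattern has a signed determinant, so by Observation~(part 3) and Proposition~\ref{prop:fix-k} the pattern $\sP$ has a fixed $k$-th sepr term for every $k$, namely $t_k(\sP)=t_k(B)$ for all $B\in Q(\sP)$. The hypothesis $t_k(\sP)\in\{\Ap,\Am,\Ab\}$ means that \emph{every} $k\times k$ principal subpattern of $\sP$ has a \emph{nonzero} signed determinant. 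The goal reduces to showing this forces every $(k+1)\times(k+1)$ principal subpattern to have nonzero signed determinant as well, i.e. $t_{k+1}(\sP)\notin\{\NN,\Sp,\Sm,\Sb\}$. By Remark~\ref{rem:match-det}, ``$\sP[\gamma]$ has nonzero signed determinant'' is \emph{equivalent} to ``$BG(\sP[\gamma])$ has a perfect matching'' (nonzero determinant of the sign pattern in the ``$\det\sP\ne 0$'' sense, which here upgrades to \emph{signed} nonzero by the no-ambiguity hypothesis). So the plan is to work entirely in the bigraph: assume every size-$k$ principal subpattern has a perfect matching in its bigraph, and deduce that every size-$(k+1)$ one does too.

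\textbf{Key steps.} First I would fix $\alpha\subseteq[n]$ with $|\alpha|=k+1$ and consider the bipartite graph $H=BG(\sP[\alpha])=BG(\sP)_\alpha$ with parts $X_\alpha$ and $Y_\alpha$, each of size $k+1$. For each $i\in\alpha$, the subset $\alpha\setminus\{i\}$ has size $k$, so by hypothesis $BG(\sP[\alpha\setminus\{i\}])=H-\{x_i,y_i\}$ has a perfect matching $M_i$ of $X_\alpha\setminus\{x_i\}$ onto $Y_\alpha\setminus\{y_i\}$. I want to combine this family $\{M_i\}_{i\in\alpha}$ into a perfect matching of $H$ itself. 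The natural tool is Hall's Theorem: I must check $|S|\le|N_H(S)|$ for every $S\subseteq X_\alpha$. Take such an $S$; pick any index $i\in\alpha$ with $x_i\notin S$ if $S\ne X_\alpha$ (if $S=\emptyset$ the condition is trivial). Then $M_i$ is a matching saturating $S$ (since $x_i\notin S$ and $M_i$ saturates all of $X_\alpha\setminus\{x_i\}$), so $|N_H(S)|\ge |S|$ immediately. The only case left is $S=X_\alpha$: here I need $|N_H(X_\alpha)|=k+1$, i.e. every $y_j$, $j\in\alpha$, has a neighbor in $X_\alpha$; but $y_i$ is matched under $M_j$ for any $j\ne i$, so indeed every vertex of $Y_\alpha$ has a neighbor. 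Hence Hall's condition holds, $H$ has a perfect matching, and so $\det\sP[\alpha]\ne0$ in the standard-expression sense; combined with the no-ambiguity hypothesis this gives a signed determinant that is $+$ or $-$. Since $\alpha$ was arbitrary of size $k+1$, every order-$(k+1)$ principal subpattern has a nonzero signed determinant, which is exactly $t_{k+1}(\sP)\in\{\Ap,\Am,\Ab\}$.

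\textbf{The main obstacle.} The content is really the Hall-condition verification, and the genuinely delicate point is the boundary case $S=X_\alpha$, together with making sure the ``deleting one coordinate'' trick supplies a saturating matching for \emph{every} proper $S$; the slick observation is that a single $M_i$ with $x_i\notin S$ already does the job, so we never need to splice several $M_i$'s together. A secondary point to state carefully is the passage from ``$\det\sP[\alpha]\ne0$ as a standard expression'' to ``$\sP[\alpha]$ has a \emph{signed} determinant equal to $+$ or $-$'': this is legitimate precisely because $\sP$ has no ambiguous principal subpattern, so $\det\sP[\alpha]$ is one of $+,-,0$, and a nonzero standard expression rules out $0$. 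One should also note at the outset that $t_k(\sP)\in\{\Ap,\Am,\Ab\}$ is equivalent to ``every order-$k$ principal subpattern has nonzero (necessarily signed, by hypothesis) determinant,'' which is the form of the inductive input used above. No iteration over $k$ is needed beyond a single step, since the statement is a one-step implication.
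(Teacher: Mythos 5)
Your proof is correct and follows essentially the same route as the paper: both verify Hall's condition for $BG(\sP)_\gamma$ with $|\gamma|=k+1$ using the perfect matchings guaranteed by $t_k(\sP)\in\{\Ap,\Am,\Ab\}$ on the order-$k$ subpatterns, then invoke Remark~\ref{rem:match-det} and the no-ambiguity hypothesis to conclude a nonzero signed determinant. The only cosmetic difference is that you check Hall's condition for a proper $S$ via the single matching $M_i$ with $x_i\notin S$ and handle $S=X_\alpha$ through the matchings $M_j$, whereas the paper embeds $S$ in a $k$-subset and rules out an unmatched $Y$-vertex by a zero-column argument; these are the same idea.
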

\begin{proof}
 Suppose $t_k(\sP)\in\{\Ap,\Am,\Ab\}$.  
Fix a subset $\gamma\subseteq [n]$ with $|\gamma|=k+1$.  We  show that $\det \sP[\gamma]\neq 0$ by showing there is a perfect matching on $BG(\sP)_\gamma$ and applying Remark \ref{rem:match-det}.  Since $t_k(\sP)\in\{\Ap,\Am,\Ab\}$, there is a perfect matching between $X_\alpha$ and $Y_\alpha$ in $BG(\sP)_\alpha$ for any $\alpha$ with $|\alpha|=k$.  For any $\beta\subset\gamma$ with $|\beta|\leq k$, $\beta$ is a subset of some $\alpha\subset\gamma$ with $|\alpha|=k$.  Since there is a perfect matching between $X_\alpha$ and $Y_\alpha$, it follows that $|X_\beta|\leq |N_{BG(\sP)_\gamma}(X_\beta)|$ for any $\beta$ with $|\beta|\leq k$.  Finally, every vertex in $Y_\gamma$ has at least one neighbor in $X_\gamma$, for otherwise $\sP[\gamma]$ has a zero column $j$ and $\det \sP[\alpha]=0$ for any $\alpha$ with $j\in\alpha$ and $|\alpha|=k$, violating the fact that $t_k\in\{\Ap,\Am,\Ab\}$.  Therefore, when $\beta=\gamma$, $N_{BG(\sP)_\gamma}(X_\gamma)=Y_\gamma$ and $|X_\gamma|=|N_{BG(\sP)_\gamma}(X_\gamma)|$.  By Hall's Theorem, there is a perfect matching on $BG(\sP)_\gamma$.  Since $\sP[\gamma]$ is not ambiguous by hypothesis, $\sP[\gamma]$ has a nonzero signed determinant.  
Since the argument holds for any $\gamma$ with $|\gamma|=k+1$, it follows that $t_{k+1}\in\{\Ap,\Am,\Ab\}$.
\end{proof}

\begin{prop}  \label{prop:noNA}
Let $\sP$ be an $n\times n$ sign pattern with $n\geq 5$ that has a signed determinant, or an $n\times n$ sign pattern with $n=4$ that has a unique sepr-sequence.  Then the first two terms of any sepr-sequence in $\SEPR(\sP)$ cannot be $\NN\Ap$, $\NN\Am$, or $\NN\Ab$.
\end{prop}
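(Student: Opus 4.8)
The plan is to argue by contradiction. Suppose some $B\in Q(\sP)$ has $\sepr(B)$ beginning with $\NN$ followed by one of $\Ap$, $\Am$, $\Ab$. Because $t_1(B)=\NN$ forces every diagonal entry of $B$, hence of $\sP$, to be $0$, and $t_2(B)\in\{\Ap,\Am,\Ab\}$ forces every order-$2$ principal minor $b_{ii}b_{jj}-b_{ij}b_{ji}=-b_{ij}b_{ji}$ to be nonzero, $\sP$ has zero diagonal and all off-diagonal entries nonzero. For such a pattern every $3\times3$ principal subpattern $\sP[\{i,j,k\}]$ has determinant $p_{ij}p_{jk}p_{ki}+p_{ik}p_{kj}p_{ji}$, a sum of two nonzero terms; so it is never $0$, and it is ambiguous exactly when $a_{ij}a_{jk}a_{ik}=-$, where $a_{ij}:=p_{ij}p_{ji}\in\{+,-\}$. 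The whole proof reduces to showing $\det\sP$ is necessarily ambiguous: for $n\ge5$ this contradicts ``$\sP$ has a signed determinant'', and for $n=4$ it contradicts uniqueness of the sepr-sequence, since an ambiguous pattern realizes positive, negative, and zero determinants, so $t_n(\sP)$ would not be fixed.

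I would then split on whether some $3\times3$ principal subpattern of $\sP$ is ambiguous. Suppose $\sP[\{i,j,k\}]$ is ambiguous. If $n\ge5$, then $[n]\setminus\{i,j,k\}$ has at least two elements and hence admits a derangement $\rho$; the two permutations obtained by composing $\rho$ with the $3$-cycles $(i\,j\,k)$ and $(i\,k\,j)$ are derangements of $[n]$ with the same signature, and their contributions to $\det\sP$ agree on the $\rho$-part but have opposite sign on the triangle part (since $p_{ij}p_{jk}p_{ki}$ and $p_{ik}p_{kj}p_{ji}$ multiply to $a_{ij}a_{jk}a_{ik}=-$), so $\det\sP$ is ambiguous. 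If $n=4$, this $3$-cycle trick is unavailable; instead Proposition~\ref{partialunique2} (applied with $k=3$) produces a $3\times3$ principal subpattern with signed determinant $0$, which is impossible by the previous paragraph.

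In the remaining case no $3\times3$ principal subpattern is ambiguous, i.e.\ $a_{ij}a_{jk}a_{ik}=+$ for every triple. Setting $\epsilon_1=+$ and $\epsilon_i=a_{1i}$, the relations $a_{1i}a_{1j}a_{ij}=+$ give $a_{ij}=\epsilon_i\epsilon_j$ for all $i\ne j$; then $r_{ij}:=\epsilon_ip_{ij}$ (with $r_{ii}=0$) defines a \emph{symmetric} sign pattern $R$ with zero diagonal and all off-diagonal entries nonzero, and $\sP=\diag(\epsilon_1,\dots,\epsilon_n)\,R$, so $\det\sP=(\prod_i\epsilon_i)\det R$ and it suffices to show $\det R$ is ambiguous. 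For this I use the cycle expansion: a derangement written as a product of disjoint cycles $C_1\cdots C_m$ contributes $\prod_t\bigl[(-1)^{|C_t|-1}\epsilon(C_t)\bigr]$ to $\det R$, where $\epsilon(C_t)$ is the product of the entries of $R$ traversed by $C_t$; a $2$-cycle contributes $-1$, and a $3$-cycle or $5$-cycle contributes $\epsilon(C_t)$. If $n$ is even, I start from a perfect-matching derangement and replace two of its transpositions $(a\,b)$, $(c\,d)$ by a Hamilton $4$-cycle on $\{a,b,c,d\}$ whose entry product is $+$ (one of the three such cycles has product $+$, since the three products multiply to $+$); this replacement reverses the sign of the contribution. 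If $n$ is odd ($\ge5$), I fix a $5$-subset $S$, match the remaining $n-5$ vertices arbitrarily, and choose a $3$-subset $T$ of $S$ and a Hamilton $5$-cycle $C_5$ of $S$ with $\epsilon(T)\epsilon(C_5)=+$ --- such a pair exists because otherwise all triangles on $S$ share one sign $s$ and all Hamilton $5$-cycles of $S$ the opposite sign $-s$, which the same balance computation (applied to $R$, or to $-R$, restricted to $S$) rules out --- and then replacing the $3$-cycle on $T$ together with the transposition on $S\setminus T$ by $C_5$ reverses the sign of the contribution. In either parity, $\det R$ has a positive and a negative term, hence $\det R$, and therefore $\det\sP$, is ambiguous.

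The step I expect to be the main obstacle is the symmetric case and, within it, making the argument work uniformly for all $n\ge4$: the naive ``attach a small gadget cycle and use an arbitrary derangement on the rest'' fails because a one-element vertex set has no derangement (so $n=5$, and to a lesser extent $n=6$, are awkward), which forces the even/odd split, a $4$-vertex gadget in one case and a $5$-vertex gadget in the other, and the small balance lemma inside $S$ to guarantee the equal-sign pair $\epsilon(T)\epsilon(C_5)=+$.
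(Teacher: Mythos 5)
Your proposal is correct, but it follows a genuinely different route from the paper's. The paper disposes of $n\geq 5$ almost immediately via extremal sign-nonsingularity theory: a sequence starting $\NN\Ap$, $\NN\Am$, or $\NN\Ab$ forces zero diagonal and all off-diagonal entries nonzero, so a signed determinant makes $\sP$ sign nonsingular with $n^2-n$ nonzero entries, contradicting the bound $\frac{n^2+3n-2}{2}$ of \cite[Theorem 8.1.1]{BS95}; for $n=4$ it uses Proposition~\ref{unique4} to get a $3\times 3$ principal subpattern with signed (hence nonzero) determinant, augments the $1,1$-entry to create a maximal sign nonsingular pattern with $13$ nonzero entries, and contradicts the structural description of such patterns. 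You instead prove directly that $\det\sP$ would have to be ambiguous: when some triangle is ambiguous you pair the two orientations of that $3$-cycle against a common derangement of the remaining $n-3\geq 2$ indices, and for $n=4$ you fall back on Proposition~\ref{partialunique2} together with the observation that no $3\times 3$ principal subpattern here can have signed determinant $0$ (its determinant is a sum of two nonzero terms); when no triangle is ambiguous you switch by a signature to a symmetric pattern and exhibit opposite-sign determinant terms via a perfect matching versus a $4$-cycle (even $n$) or a triangle-plus-transposition versus a Hamilton $5$-cycle inside a $5$-subset (odd $n$), with the balance argument ruling out the bad configuration ``all triangles $s$, all $5$-cycles $-s$.'' I checked the delicate points---the identity $a_{ij}=\epsilon_i\epsilon_j$ from unambiguous triangles, the sign bookkeeping in the cycle replacements, and the balance computation---and they are sound, and your use of Proposition~\ref{partialunique2} creates no circularity since it precedes this result in the paper. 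What the paper's route buys is brevity by outsourcing to a known extremal theorem; what yours buys is a self-contained elementary argument that explicitly produces terms of opposite sign, and for $n=4$ it needs only Proposition~\ref{partialunique2} rather than the full order-$4$ resolution of the conjecture in Proposition~\ref{unique4}.
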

\begin{proof}
Suppose to the contrary that $\sP$ is a sign pattern that has a signed determinant and a realization $B$ such that $\sepr(B)$ starts with $\NN\Ap$, $\NN\Am$, or $\NN\Ab$.  Then the diagonal entries of $\sP$ are all zero, and the off-diagonal entries are all nonzero.  Thus $\sP$ has $n^2-n$ nonzero entries.  Since $\Gamma(\sP)$  has an $n$-cycle, the  signed determinant of $\sP$ is nonzero; thus, $\sP$ is sign nonsingular.  By \cite[Theorem 8.1.1]{BS95}, any sign nonsingular pattern has at most $\frac{n^2+3n-2}{2}$ nonzero entries.  However, 
\[n^2-n>\frac{n^2+3n-2}{2}\]
whenever $n\geq 5$, which is a contradiction.  

Now assume  $n=4$ and $\sP$ has a unique sepr-sequence, so $\sP$ has a signed determinant.  Thus $\sP$ has $12$ nonzero entries.  By Proposition~\ref{unique4}, there is at least one $3\times 3$ principal subpattern that has a signed determinant, say $\sP[\{2,3,4\}]$.  Since all off-diagonal entries are nonzero, $\sP[\{2,3,4\}]$ has a nonzero signed determinant.  Construct a sign pattern $\sP'$ from $\sP$ by changing the $1,1$-entry to be $p_{1,1}'\neq 0$ such that 
\[p_{1,1}'\cdot \det\sP[\{2,3,4\}] = \det \sP.\]
Any order 4 composite cycle of $\Gamma(\sP')$ using the loop on vertex $1$ includes a $3$-cycle on $\{2,3,4\}$. 
{Therefore, $\sP'$ is a sign nonsingular pattern with $13$ nonzero entries, which equals the upper bound $13=\frac{4^2+3\cdot 4-2}{2}$.}  By \cite[Theorem 8.1.1]{BS95}, there are two permutation matrices $Q_1$ and $Q_2$ such that the $i,j$-entry of $Q_1\sP'Q_2$ is zero whenever $i\geq j+2$, so the first column of $Q_1\sP'Q_2$ has exactly $2$ zero entries.  This is impossible because each column of $\sP'$ has at most one zero entry. 

Therefore, $\sepr(\sP)$ cannot start with $\NN\Ap$, $\NN\Am$, or $\NN\Ab$.
\end{proof}

For each of $\NN\Ap$, $\NN\Am$, or $\NN\Ab$, there is an $n\times n$ sign pattern with $n=2$ or $3$ that  starts with the specified initial pair and has a unique sepr-sequence, except that $\NN\Ab$ is impossible for $n=2$ and $\NN\Ap$ is impossible for $n=3$.  For example,  $\sepr\left(\mtx{0 & + \\- & 0}\right)=\NN\Ap$, $\sepr\left(\mtx{0 & + \\+ & 0}\right)=\NN\Am$,  $\sepr\left(\mtx{0 & + & +\\+ & 0 & +\\+ &+& 0}\right)=\NN\Am\Ap$, and $\sepr\left(\mtx{0 & - & -\\+ & 0 & +\\+ &+& 0}\right)=\NN\Ab\Am$.  For $n=3$, every sign pattern with $t_1=\NN$ and $t_2=\Ap$ is a skew-symmetric doubly directed $3$-cycle, which has ambiguous determinant.


\subsection{Sign semi-stable and other patterns without long cycles}\label{s-sss}

A matrix is {\em semi-stable} (respectively, {\em stable}) if {each of its eigenvalues has nonpositive (respectively, negative) real part}.  
A sign pattern is {\em sign semi-stable} (respectively, {\em sign stable}) if every matrix $B\in Q(\sP)$ is   semi-stable (respectively,  stable).

Recall (e.g., \cite[Fact 42.4.2]{HLA2SP}) that an $n\x n$ sign pattern $\sP=[p_{ij}]$ is sign semi-stable if and only if  
\bit
\item[($\alpha$)] $p_{ii}=-$ or $p_{ii}=0$ for $i=1,\dots,n$,
\item[($\beta$)]  $p_{ij}p_{ji}=-$ or $p_{ij}p_{ji}=0$ for $1\le i<j\le n$, and
\item[($\gamma$)] Any cycle  of $\D(\sP)$ has length at most two.
\eit
The next four statements follow from 
this characterization of sign semi-stable patterns.  
\ben[(i)]
\item\label{ssc0} Every strong component of a digraph of a  sign semi-stable pattern is a strong ditree.
\item \label{ssc01} The digraph of the simplified pattern   of a sign semi-stable pattern  is a strong diforest. 
\item\label{ssc1} A sign semi-stable pattern of order $k$ has a signed determinant equal to zero or  $(-)^k$. 
\item\label{ssc2} Every principal subpattern of
a sign semi-stable pattern is  sign semi-stable.
\een

We begin by establishing some necessary properties of a sign pattern $\sP$ such that every principal subpattern has a signed determinant and $\D(\sP)$ does not have cycles of length three or more.  This includes sign semi-stable patterns by \eqref{ssc1}, \eqref{ssc2}, and  \eqref{ssc01} above. 

\begin{thm}\label{thm:sss} Let $\sP$ be an $n\times n$ sign pattern such that every principal subpattern has a signed determinant and $\D(\sP)$ does not have cycles of length three or more. 
\ben[{\rm (a)}]
\item\label{sss1} $\sP$ has a unique sepr-sequence. 
\item\label{sss3} If  $t_k(\sP)\in\{\Ap,\Am,\Ab\}$, then
$t_{\ell}(\sP)\in\{\Ap,\Am,\Ab\}$ for $\ell=k,\dots,n$.
\item\label{sss34} If $t_k(\sP)=\NN$, then $t_{k+2}(\sP)=\NN$; moreover, if $k$ is even, then $t_j(\sP)=\NN$ for any $j\geq k$.
\item\label{sss4} Suppose  $t_1(\sP)=\NN$ and let $\sQ$ be the simplified pattern of $\sP$. Then, $t_k(\sP)\in\{\Sp,\Sm,\Sb\}$ if   $k<n$, $k$ is  even, and $k\le 2\match(\D(\sQ))$; $t_n(\sP)\in\{\Ap,\Am\}$ if $\D(\sQ)$ has a perfect matching; and $t_k(\sP)=\NN$ for $k$ odd or $k> 2\match(\D(\sQ))$.
\een
 \end{thm}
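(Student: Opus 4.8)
The plan is to dispatch parts (a) and (b) quickly and then do the real work in (c) and (d) through a combinatorial description of which principal minors vanish. Part (a) is immediate: ``every principal subpattern has a signed determinant'' is exactly the hypothesis already observed to force a unique sepr-sequence. For part (b), having a signed determinant is equivalent to not being ambiguous, so $\sP$ contains no ambiguous principal subpattern; combining this with part (a) puts us in the setting of Theorem~\ref{thm:Aforever}, which gives $t_k(\sP)\in\{\Ap,\Am,\Ab\}\Rightarrow t_{k+1}(\sP)\in\{\Ap,\Am,\Ab\}$, and an induction on $\ell$ from $k$ to $n$ finishes.

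The engine for (c) and (d) is the following reformulation: for each $\alpha$, since $\sP[\alpha]$ has a signed determinant, $\det\sP[\alpha]=0$ exactly when the standard expression has no nonzero term, i.e.\ exactly when $\D(\sP[\alpha])$ has no composite cycle spanning $\alpha$; and because $\D(\sP)$, hence every $\D(\sP[\alpha])$, has only cycles of length at most two, such a spanning composite cycle is a disjoint union of loops and $2$-cycles. Thus $\det\sP[\alpha]\ne 0$ iff $\alpha$ partitions into loop-vertices and pairs joined by a $2$-cycle, and in particular the number of loop-vertices used has the same parity as $|\alpha|$. For part (c) I would argue by contradiction: if $t_k(\sP)=\NN$ and some $\gamma$ with $|\gamma|=k+2$ has $\det\sP[\gamma]\ne 0$, take a spanning composite cycle of $\gamma$; if it contains a $2$-cycle, delete that $2$-cycle together with its two vertices, otherwise $\gamma$ consists of loop-vertices and we delete any two of them. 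Either way we obtain $\alpha\subseteq\gamma$ with $|\alpha|=k$ and $\det\sP[\alpha]\ne 0$, contradicting $t_k(\sP)=\NN$, so $t_{k+2}(\sP)=\NN$. For the ``moreover'' clause, when $k$ is even a spanning composite cycle of a set of size $k+1$ must use a loop (by the parity remark), and deleting that loop-vertex again contradicts $t_k(\sP)=\NN$; hence $t_{k+1}(\sP)=\NN$ as well, and iterating the first part of (c) pushes $\NN$ out to every index $\ge k$.

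For part (d), first replace $\sP$ by its simplified pattern $\sQ$, noting that the removed arcs lie on no cycle so $\det\sP[\alpha]=\det\sQ[\alpha]$ for all $\alpha$; since $t_1(\sP)=\NN$ kills every diagonal entry (hence every loop) and all cycles have length at most two, the digraph $\D(\sQ)$ is a strong diforest, whose underlying forest $G$ satisfies $\match(G)=\match(\D(\sQ))$, and by the (now loopless) reformulation $\det\sP[\alpha]\ne 0$ iff $G[\alpha]$ has a perfect matching. The ``$\NN$'' assertions then follow since a perfect matching of $G[\alpha]$ requires $|\alpha|$ even and $|\alpha|\le 2\match(G)$; a nonzero order-$k$ minor for even $k\le 2\match(\D(\sQ))$ comes from taking $\alpha$ to be the $k$ endpoints of $k/2$ edges of a maximum matching of $G$; and if $\D(\sQ)$ has a perfect matching, then $\det\sP=\det\sQ\ne 0$ has a signed value, so $t_n(\sP)\in\{\Ap,\Am\}$. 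The step I expect to need the most care is producing a \emph{zero} order-$k$ minor for even $k<n$: here $G$ is a forest on $n\ge 3$ vertices, so it has a vertex $v$ of degree at most one, and because $k\le n-1$ we can choose $\alpha$ with $|\alpha|=k$, $v\in\alpha$, and the (at most one) neighbor of $v$ lying outside $\alpha$, so $v$ is isolated in $G[\alpha]$ and $\det\sP[\alpha]=0$; together with the nonzero minor above, this forces $t_k(\sP)\in\{\Sp,\Sm,\Sb\}$ and completes (d).
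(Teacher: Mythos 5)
Your proposal is correct and follows essentially the same route as the paper: (a) and (b) from Corollary~\ref{prop:unique} and Theorem~\ref{thm:Aforever}, and (c), (d) by observing that, under the signed-determinant hypothesis, an order-$k$ principal minor is nonzero exactly when there is a spanning composite cycle made of loops and $2$-cycles, then deleting a $2$-cycle or loop (parity argument) for (c) and passing to matchings in the underlying forest of the simplified pattern for (d). Your explicit construction of a zero order-$k$ minor via a degree-$\le 1$ vertex with its neighbor excluded is just a more detailed version of the paper's ``isolated vertex or leaf without its neighbor'' step.
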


\bpf  Statements \eqref{sss1}  and \eqref{sss3}  are immediate from Corollary \ref{prop:unique} and Theorem \ref{thm:Aforever}, respectively.
By hypothesis,  the simplified pattern $\sQ$  of $\sP$  is a strong diforest.  By Remark \ref{rem:simp}, $\sepr(\sQ)=\sepr(\sP)$, so we work with $t_k(\sQ)$ for the rest of the proof.

Suppose $t_k(\sQ)=\NN$.  If $t_{k+2}(\sQ)\neq \NN$, then there is a composite cycle of order $k+2$.  Assume that this composite cycle is composed of $a$ $2$-cycles and $b$ $1$-cycles, so $2a+b=k+2$.  Since $t_k(\sQ)=\NN$, it is impossible that $b=k+2$.  Hence $a\geq 1$.  It follows that there is a composite cycle of order $k$ composed of $a-1$ $2$-cycles and $b$ $1$-cycles, a contradiction.  Therefore, $t_{k+2}=\NN$.  Now suppose $k$ is even. 
If $t_{k+1}(\sQ)\neq \NN$, then there is a composite cycle of order $k+1$, which is composed of $a$ $2$-cycles and $b$ $1$-cycles.  Since $2a+b=k+1$ is an odd number, $b\geq 1$.  Then there is a composite cycle of order $k$ composed of $a$ $2$-cycles and $b-1$ $1$-cycles, violating the fact that $t_k(\sQ)=\NN$.  Therefore, $t_{k+1}(\sQ)=t_{k+2}(\sQ)=\NN$ when $k$ is even.  Inductively, $t_j(\sQ)=\NN$ for all $j\geq k$.

Suppose $t_1(\sQ)=\NN$.  Then $t_k(\sQ)=\NN$ for any odd $k$.  If $k$ is even and $k> 2\match(\D(\sQ))$, then there is no composite cycle of order $k$, so $t_k(\sQ)=\NN$.  For any even $k$ with $2\leq k\leq 2\match(\D(\sQ))$, there is at least one composite cycle of order $k$, so $t_k(\sQ)\in\{\Sp,\Sm,\Sb,\Ap,\Am,\Ab\}$.  In any strong diforest with $2\le k< n$ and $k$ even, choosing the index of an isolated vertex or a leaf without its neighbor results in a combinatorially singular principal subpattern of order $k$. Thus, $t_k(\sQ)\in\{\Sp,\Sm,\Sb\}$ for all such $k$. If $n=2\match(\D(\sQ))$, then $t_n(\sQ)\in\{\Ap,\Am\}$.
 \epf

The next result follows from Theorem \ref{thm:sss} and by properties \eqref{ssc1} and \eqref{ssc2} of sign semi-stable patterns. 

\begin{cor}\label{cor:sss} Let $\sP$ be an $n\times n$ sign semi-stable pattern. 
\ben[{\rm (a)}]
\item\label{c:sss1} $\sP$ has a unique sepr-sequence  in which  $t_k(\sP)\in\{\NN,\,\Sp,\, \Ap\}$ if $k$ is even, and  $t_k(\sP)\in\{\NN,\,\Sm,\, \Am\}$ if $k$ is odd. 
\item\label{c:sss3} If  $t_k(\sP)\in\{\Ap,\Am\}$, then
$t_{\ell}(\sP)\in\{\Ap,\Am\}$ for $\ell=k,\dots,n$ with $\Ap$ and $\Am$ alternating.

\item\label{c:sss34} If $t_k(\sP)=\NN$, then $t_{k+2}(\sP)=\NN$; moreover, if $k$ is even, then $t_j(\sP)=\NN$ for any $j\geq k$.
\item\label{c:sss4} Suppose  $t_1(\sP)=\NN$ and let $\sQ$ be the simplified pattern of $\sP$. Then, $t_k(\sP)=\Sp$ if   $k<n$  even with $k\le 2\match(\D(\sQ))$, $t_n(\sP)=\Ap$ if $\D(\sQ)$ has a perfect matching, and $t_k=\NN$ for $k$ odd or $k> 2\match(\D(\sQ))$.
\een
 \end{cor}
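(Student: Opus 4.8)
The plan is to deduce Corollary~\ref{cor:sss} from Theorem~\ref{thm:sss} by grafting on the parity information in property~\eqref{ssc1}. First I would check that a sign semi-stable pattern $\sP$ satisfies the hypotheses of Theorem~\ref{thm:sss}: by \eqref{ssc2} every principal subpattern of $\sP$ is sign semi-stable, so by \eqref{ssc1} every principal subpattern has a signed determinant; and condition~($\gamma$) of the semi-stability characterization says $\D(\sP)$ has no cycle of length three or more. Hence Theorem~\ref{thm:sss} applies, which immediately gives uniqueness (the first clause of part~\eqref{c:sss1}), all of part~\eqref{c:sss34} (it is identical to Theorem~\ref{thm:sss}\eqref{sss34}), and the ``$t_k=\NN$ for $k$ odd or $k>2\match(\D(\sQ))$'' clause of part~\eqref{c:sss4}.

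The genuinely new content is the parity restriction in part~\eqref{c:sss1}. By \eqref{ssc1}, every order-$k$ principal subpattern of $\sP$ has signed determinant $0$ or $(-)^k$, so for any $B\in Q(\sP)$ each order-$k$ principal minor of $B$ is either zero or of sign $(-)^k$. When $k$ is even there is thus no negative order-$k$ minor, forcing $t_k(\sP)\in\{\NN,\Sp,\Ap\}$; when $k$ is odd there is no positive order-$k$ minor, forcing $t_k(\sP)\in\{\NN,\Sm,\Am\}$. For part~\eqref{c:sss3}: if $t_k(\sP)\in\{\Ap,\Am\}$, then part~\eqref{sss3} of Theorem~\ref{thm:sss} yields $t_\ell(\sP)\in\{\Ap,\Am,\Ab\}$ for all $\ell\ge k$; but $\Ab$ requires both a positive and a negative order-$\ell$ minor and so is ruled out by the parity restriction just established, leaving $t_\ell(\sP)\in\{\Ap,\Am\}$, and the sign of a nonzero order-$\ell$ minor must equal $(-)^\ell$, which is exactly the asserted alternation.

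It remains to finish part~\eqref{c:sss4} by intersecting part~\eqref{sss4} of Theorem~\ref{thm:sss} with part~\eqref{c:sss1}. For $k<n$ even with $k\le 2\match(\D(\sQ))$, Theorem~\ref{thm:sss}\eqref{sss4} gives $t_k(\sP)\in\{\Sp,\Sm,\Sb\}$ while part~\eqref{c:sss1} gives $t_k(\sP)\in\{\NN,\Sp,\Ap\}$, so $t_k(\sP)=\Sp$. For the last term, I would observe that $t_1(\sP)=\NN$ forces every diagonal entry of $\sP$ to be $0$, so $\D(\sP)$, and hence $\D(\sQ)$, is loopless; since $\D(\sQ)$ is a strong diforest by \eqref{ssc01}, any composite cycle of order $n$ in $\D(\sQ)$ uses only $2$-cycles, and therefore the existence of a perfect matching of $\D(\sQ)$ forces $n=2\match(\D(\sQ))$ to be even. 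Then part~\eqref{c:sss1} gives $t_n(\sP)\in\{\NN,\Sp,\Ap\}$ and Theorem~\ref{thm:sss}\eqref{sss4} gives $t_n(\sP)\in\{\Ap,\Am\}$, so $t_n(\sP)=\Ap$. I do not expect a real obstacle here; the only step needing a bit of care is this parity/loop bookkeeping, which upgrades the $\{\Ap,\Am,\Ab\}$- and $\{\Sp,\Sm,\Sb\}$-type conclusions of Theorem~\ref{thm:sss} to the single symbol predicted by \eqref{ssc1}.
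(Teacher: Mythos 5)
Your proposal is correct and follows the same route the paper intends: the paper's proof is just the remark that the corollary follows from Theorem~\ref{thm:sss} together with the facts that every principal subpattern of a sign semi-stable pattern is sign semi-stable and has signed determinant $0$ or $(-)^k$, which is exactly the parity argument you spell out. Your elaboration (ruling out $\Ab$ and the mixed $\Sa$-terms by parity, and noting a perfect matching forces $n$ even so $t_n=\Ap$) is a faithful filling-in of those details.
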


Note that there exist sign patterns satisfying \eqref{c:sss1} -- \eqref{c:sss4} of Corollary \ref{cor:sss} that are not  sign semi-stable, as in the next example.
\begin{ex} Let   
$\sP=\mtx{0 & - & 0 & 0\\ + & 0 & + & 0\\ 0 & 0 & 0 & +\\+ & 0 & 0 & 0}$ and $\sQ=\mtx{0 & - & 0 & 0\\ + & 0 & - & 0\\ 0 & + & 0 & -\\0 & 0 & + & 0}$.  Then $\sepr(\sP)=\sepr(\sQ)=\NN\Sp\NN\Ap$, $\sQ$ is sign semi-stable, but  $\sP$ is not sign semi-stable (because $\Gamma(\sP)$ has a $4$-cycle). \end{ex}

Next we characterize sepr-sequences that can be uniquely realized only by sign semi-stable patterns. 

\begin{lem}\label{lem:semidoable}
Suppose $\sP$ is an $n\times n$ sign pattern that has a unique sepr-sequence  with   $t_1(\sP)\in\{\Am,\Sm,\NN\}$ and $t_2(\sP)\in\{\Ap,\Sp,\NN\}$.  
If $t_k(\sP)=\NN$ for all $k\geq 3$ or $n\leq 2$, then $\sP$ is sign semi-stable.
\end{lem}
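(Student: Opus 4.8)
The plan is to check that $\sP=[p_{ij}]$ satisfies the three conditions $(\alpha)$, $(\beta)$, $(\gamma)$ of the characterization of sign semi-stable patterns recalled just before Theorem~\ref{thm:sss}, using throughout that a unique sepr-sequence gives $\sP$ a fixed $k$-th sepr term for every $k$. Condition $(\alpha)$ is immediate: for any $B\in Q(\sP)$ the order-$1$ principal minors of $B$ are its diagonal entries, so $t_1(\sP)\in\{\Am,\Sm,\NN\}$ (no positive order-$1$ minor) forces each $p_{ii}\in\{0,-\}$.

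For $(\beta)$ I would argue by contradiction. Suppose $p_{ij}p_{ji}=+$ for some $i\neq j$; then all four entries of $\sP[\{i,j\}]$ are nonzero and $\det\sP[\{i,j\}]=p_{ii}p_{jj}-p_{ij}p_{ji}$. By $(\alpha)$ we have $p_{ii},p_{jj}\in\{0,-\}$, so there are two cases. If $p_{ii}=p_{jj}=-$, then $p_{ii}p_{jj}=+$ and the determinant is $(+)-(+)$, i.e. $\sP[\{i,j\}]$ is ambiguous; since $\sP$ has a fixed $2$nd sepr term, Remark~\ref{rem:Sstar} gives $t_2(\sP)=\Sb$, contradicting $t_2(\sP)\in\{\Ap,\Sp,\NN\}$. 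If instead $p_{ii}p_{jj}=0$, then $\det\sP[\{i,j\}]$ equals the sign $-$, so every $B\in Q(\sP)$ has a negative order-$2$ principal minor and hence $t_2(B)\notin\{\Ap,\Sp,\NN\}$; as $t_2(\sP)=t_2(B)$, this again contradicts the hypothesis on $t_2(\sP)$. Hence $p_{ij}p_{ji}\in\{0,-\}$ for all $i<j$. This case analysis is the one step I expect to require genuine care, mostly in tracking which of the seven sepr symbols $t_2(\sP)$ is forced into and in invoking the fixed-$2$nd-term property at the right moment.

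For $(\gamma)$, the case $n\le 2$ is vacuous, so assume $n\ge 3$ and $t_k(\sP)=\NN$ for all $k\ge 3$. Suppose $\D(\sP)$ has a cycle $C$ of length $\ell\ge 3$ with vertex set $\gamma$, $|\gamma|=\ell$; the permutation of $\gamma$ corresponding to $C$ then contributes a nonzero term to the standard expression of $\det\sP[\gamma]$, so $\det\sP[\gamma]$ is $+$, $-$, or ambiguous. In each case there is $B\in Q(\sP)$ with $\det B[\gamma]\neq 0$ (for an ambiguous pattern using that its qualitative class contains a matrix with nonzero, indeed positive, determinant). Then $t_\ell(B)\neq\NN$, and since $\sP$ has a fixed $\ell$-th sepr term, $t_\ell(\sP)=t_\ell(B)\neq\NN$, contradicting $\ell\ge 3$. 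Thus every cycle of $\D(\sP)$ has length at most two, and by the characterization $\sP$ is sign semi-stable.
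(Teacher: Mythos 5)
Your proposal is correct and follows essentially the same route as the paper: it verifies conditions $(\alpha)$, $(\beta)$, $(\gamma)$ of the characterization of sign semi-stability, with $(\alpha)$ from $t_1$, $(\beta)$ from the fixed second term together with Remark~\ref{rem:Sstar}, and $(\gamma)$ from the fixed higher terms being $\NN$. The paper states these three deductions more tersely; your case analysis for $(\beta)$ and the cycle-term argument for $(\gamma)$ are just the details it leaves implicit.
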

\begin{proof}
Let $\sP=\begin{bmatrix}p_{ij}\end{bmatrix}$.  Since $t_1(\sP)\in\{\Am,\Sm,\NN\}$, $p_{ii}=-$ or $p_{ii}=0$ for $i=1,\dots,n$.  Since $t_2(\sP)\in\{\Ap,\Sp,\NN\}$ and $\sP$ has a unique sepr-sequence, $p_{ij}p_{ji}=-$ or $p_{ij}p_{ji}=0$ for $1\le i<j\le n$.  The fact that $\sP$ has a unique sepr-sequence with $t_k(\sP)=\NN$ for all $k\geq 3$ implies $\Gamma(\sP)$ does not have any cycle of length $3$ or higher. Therefore,  $\sP$ is sign semi-stable.  
\end{proof}

As we will see in Theorem~\ref{thm:semirecog}, any sepr-sequence of a sign semi-stable pattern can be the unique sepr-sequence of another sign pattern that is not sign semi-stable, except for those sepr-sequences described in Lemma~\ref{lem:semidoable}.  Before proving Theorem~\ref{thm:semirecog}, we first study the sepr-sequence of some sign semi-stable patterns.

Let $P_k$ be the loopless doubly directed digraph whose underlying graph is a path on $k$ vertices.  Let $\Pl_k$, $\Pll_k$, and $P^\ell_k$ be the digraphs obtained from $P_k$ by adding a loop at one of the endpoints, both of the endpoints, or every vertex  of $P_k$, respectively.  Let $S_k$ be the loopless doubly directed digraph whose underlying graph is a star on $k$ vertices, and let $\Sl_k$ be the digraph obtained from $S_k$ by adding a loop on the center vertex.

\begin{lem}\label{lem:addcycle}
If $\sP$ is a simplified sign semi-stable pattern and $\Gamma(\sP)$ contains $P_4$ or $\Pl_3$ as a (not necessarily induced) subdigraph, then there is another sign pattern $\sP'$ with $\sepr(\sP')=\sepr(\sP)$ and $\sP'$ is not sign semi-stable.
\end{lem}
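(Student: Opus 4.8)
The plan is to obtain $\sP'$ from $\sP$ by adjoining one arc that closes the given $P_4$ or $\Pl_3$ into a cycle $C$ of length at least three. Since every sign semi-stable pattern has all its cycles of length at most two (condition $(\gamma)$ of the characterization above), $\sP'$ will automatically fail to be sign semi-stable, so the only real work is to show that $\sP'$ still has the unique sepr-sequence $\sepr(\sP)$.

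First I would record the structure of $\Gamma(\sP)$: being the digraph of a simplified sign semi-stable pattern, it is a doubly directed forest together with negative loops on a subset of its vertices; its only simple cycles are loops and $2$-cycles, and every such cycle $c$ has signed cycle product $(-)^{|c|}$ (a loop by condition $(\alpha)$, a $2$-cycle by condition $(\beta)$). Among the vertex set $W$ of the prescribed $P_4$ (respectively $\Pl_3$), the only arcs are those of the path and possibly some loops, since a forest has no chords; in particular the arc I am about to add is not already present. In the $P_4$ case, with path $v_1\,v_2\,v_3\,v_4$, I let $\sP'$ be $\sP$ with a nonzero entry placed in position $(v_4,v_1)$; in the $\Pl_3$ case, with path $v_1\,v_2\,v_3$ and loop at $v_1$, I place a nonzero entry in position $(v_3,v_1)$. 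The sign of the new entry is chosen so that the product of the signs around the resulting cycle $C$ (the $4$-cycle $v_1\to v_2\to v_3\to v_4\to v_1$ or the $3$-cycle $v_1\to v_2\to v_3\to v_1$) equals $-$; equivalently, so that the signed cycle product of $C$ is $(-)^{|C|}$, exactly the normalization that the loops and $2$-cycles of $\sP$ already satisfy.

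Next I would note that a new simple cycle of $\Gamma(\sP')$ must pass through the new arc, and since $\Gamma(\sP)$ is a forest the simple path joining the two endpoints of the new arc is unique, so $C$ is the only simple cycle of $\Gamma(\sP')$ not already present in $\Gamma(\sP)$. Consequently every simple cycle $c$ of $\Gamma(\sP')$ has signed cycle product $(-)^{|c|}$. Now expand any principal minor $\det B'[\gamma]$, $B'\in Q(\sP')$, by the standard expression: for a permutation $\sigma$ of $\gamma$ with nonzero contribution, each cycle of $\sigma$ is a cycle of $\Gamma(\sP')$, and the contribution of $\sigma$ factors over these cycles with each factor having sign equal to the corresponding signed cycle product; since the cycle lengths sum to $|\gamma|$, this contribution has sign $(-)^{|\gamma|}$. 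Hence every nonzero term has sign $(-)^{|\gamma|}$, so $\det B'[\gamma]\in\{0,(-)^{|\gamma|}\}$, vanishing precisely when $\sP'[\gamma]$ has no spanning composite cycle. In particular every principal subpattern of $\sP'$ has a signed determinant, so $\sP'$ has a unique sepr-sequence.

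Finally, it remains to see that this sequence equals $\sepr(\sP)$, i.e.\ that $\det\sP'[\gamma]=\det\sP[\gamma]$ for every $\gamma$; since both values lie in $\{0,(-)^{|\gamma|}\}$, this amounts to showing that $\sP'[\gamma]$ has a spanning composite cycle if and only if $\sP[\gamma]$ does. One direction is immediate because $\Gamma(\sP)\subseteq\Gamma(\sP')$. For the other, a spanning composite cycle of $\sP'[\gamma]$ that uses the new arc must contain all of $C$, hence all of $W$, together with a spanning composite cycle of $\sP[\gamma\setminus W]$; the latter extends to a spanning composite cycle of $\sP[\gamma]$ by adjoining the disjoint $2$-cycles on $\{v_1,v_2\}$ and $\{v_3,v_4\}$ in the $P_4$ case, or the loop at $v_1$ together with the $2$-cycle on $\{v_2,v_3\}$ in the $\Pl_3$ case --- all present in $\Gamma(\sP)$. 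Thus $\sepr(\sP')=\sepr(\sP)$, and $\sP'$ is not sign semi-stable because $C$ has length at least three. The point that requires care is the choice of sign of the new arc: it is made precisely so that $C$ contributes to every determinant expansion exactly as a basic cycle of a sign semi-stable pattern would, and once that is arranged the rest is the routine forest bookkeeping described above.
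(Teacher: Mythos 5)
Your proposal is correct and follows essentially the same route as the paper: add a single arc closing the $P_4$ (respectively $\Pl_3$) into a cycle $C$ whose signed cycle product is chosen to agree with that of the two $2$-cycles (respectively the loop plus $2$-cycle), note that $C$ is the only new cycle because the digraph is a strong diforest, and conclude that every principal minor is unchanged via the replacement of $C$ by that composite cycle. Your intermediate step, that all cycles of $\sP'$ have signed cycle product $(-)^{|c|}$ so every principal subpattern has signed determinant $0$ or $(-)^{|\gamma|}$, is just a more explicit packaging of the same sign bookkeeping the paper performs directly.
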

\begin{proof}
Suppose $\sP$ is a simplified sign semi-stable pattern such that $\Gamma(\sP)$ contains $P_4$ as a subdigraph.  Assume $P_4$ has vertices $u$, $v$, $x$, and $y$ in path order.  Obtain $\sP'$ from $\sP$ by changing the $y,u$-entry to be nonzero so that the $4$-cycle $C$  
thus created has a positive signed cycle product.  Thus, $\sP'$ is not a sign semi-stable pattern.  Since $\Gamma(\sP)$ is a strong diforest, the only cycle of $\Gamma(\sP')$ that contains the arc $(y,u)$ is $C$.  For any principal subpattern $\sP'[\alpha]$, if $\{u,v,x,y\}$ is not contained in $\alpha$, then $\det(\sP'[\alpha])=\det(\sP[\alpha])$; if $\{u,v,x,y\}\subseteq \alpha$, then any composite cycle on $\alpha$ using $C$ gives another composite cycle by replacing $C$ by two $2$-cycles with vertices $\{u, v\}$ and $\{x, y\}$, and they have the same sign, so $\det(\sP'[\alpha])=\det(\sP[\alpha])$.  Therefore, $\sepr(\sP')=\sepr(\sP)$.

Now suppose $\Gamma(\sP)$ contains a $\Pl_3$ on vertices $u$, $v$, and $x$, where $x$ is the vertex with a loop.  Then  obtain $\sP'$ from $\sP$ by changing the $x,u$-entry to be nonzero so that the $3$-cycle $C$ thus created has a negative signed cycle product.  Thus, whenever $C$ is in a composite cycle of $\Gamma(\sP')$, it yields another composite cycle on the same set of vertices with the same sign by replacing $C$ with the $2$-cycle on vertices $\{u,v\}$ and the loop on $x$.  Thus, $\sP'$ has the desired properties.
\end{proof}

\begin{lem}\label{prop:semistars}
Let $\sP$ be a $k\times k$ sign semi-stable pattern with $k\geq 2$. 
\begin{itemize}
\item If $\Gamma(\sP)=S_k$, then $\sepr(\sP)=\sepr\begin{bmatrix}0&+\\-&0\end{bmatrix}\oplus O_{k-2}$, and the sepr-sequence is $\NN\Sp\NN\OL{\NN}$ if $k\geq 3$ and $\NN\Ap$ if $k=2$.
\item If $\Gamma(\sP)=\Sl_k$, then $\sepr(\sP)=\sepr\begin{bmatrix}-&+\\-&0\end{bmatrix}\oplus O_{k-2}$, and the sepr-sequence is $\Sm\Sp\NN\OL{\NN}$ if $k\geq 3$ and $\Sm\Ap$ if $k=2$.
\end{itemize}
\end{lem}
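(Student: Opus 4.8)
The plan is to read $\sepr(\sP)$ directly off the cycle structure of $\Gamma(\sP)$ and then reconcile it with the stated direct-sum expression. Write $c$ for the center of the star and $l_1,\dots,l_{k-1}$ for the leaves. Since $\sP$ is sign semi-stable it has a unique sepr-sequence by Corollary~\ref{cor:sss}, and (by property \eqref{ssc1}) every principal subpattern has a signed determinant, so $t_m(\sP)$ is determined by the signs of the composite cycles of $\Gamma(\sP)$ supported on the various $m$-subsets. The governing observation is that, by the sign semi-stability characterization, every cycle of $\Gamma(\sP)$ has length at most two; in a star the only $2$-cycles are the $\{c,l_i\}$, and the only possible loop is at $c$, so \emph{every} cycle of $\Gamma(\sP)$ contains the vertex $c$. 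Hence any composite cycle, being a vertex-disjoint union of cycles, consists of a single cycle and thus has order at most $2$. Consequently $\det\sP[\alpha]=0$ whenever $|\alpha|\ge 3$, i.e.\ $t_m(\sP)=\NN$ for $3\le m\le k$.

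It remains to evaluate $t_1$ and $t_2$. For $t_1$: if $\Gamma(\sP)=S_k$ there are no loops, so every $1\times1$ principal minor is $0$ and $t_1(\sP)=\NN$; if $\Gamma(\sP)=\Sl_k$ then $p_{cc}=-$ and $p_{l_il_i}=0$ by the sign semi-stability conditions on diagonal entries, and since $k\ge2$ there is at least one leaf, so $t_1(\sP)=\Sm$. For $t_2$: sign semi-stability forces $p_{cl_i}p_{l_ic}=-$ on the $2$-cycles, so $\det\sP[\{c,l_i\}]=p_{cc}p_{l_il_i}-p_{cl_i}p_{l_ic}=0-(-)=+$ in both cases, whereas $\det\sP[\{l_i,l_j\}]=0$ for distinct leaves. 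Thus when $k=2$ the unique order-$2$ minor is $\det\sP=+$ and $t_2(\sP)=\Ap$; when $k\ge3$ the order-$2$ minors realize $+$ and $0$ but not $-$, so $t_2(\sP)=\Sp$. Assembling: $\sepr(\sP)$ equals $\NN\Ap$ (resp.\ $\Sm\Ap$) when $k=2$ and $\NN\Sp\NN\OL{\NN}$ (resp.\ $\Sm\Sp\NN\OL{\NN}$) when $k\ge3$, in the $S_k$ (resp.\ $\Sl_k$) case.

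Finally I would match this against the direct-sum form. A direct check gives $\sepr\begin{bmatrix}0&+\\-&0\end{bmatrix}=\NN\Ap$ and $\sepr\begin{bmatrix}-&+\\-&0\end{bmatrix}=\Sm\Ap$, while $\sepr(O_{k-2})$ is the all-$\NN$ sequence of length $k-2$. By Proposition~\ref{prop:reduce} the sepr-sequence of the direct sum is the $*$-product of these, and the addition/multiplication tables give $(\NN\Ap)*(\NN\cdots\NN)=\NN\Sp\NN\cdots\NN$ and $(\Sm\Ap)*(\NN\cdots\NN)=\Sm\Sp\NN\cdots\NN$ (and the product is just the $2\times2$ sequence when $k=2$), matching the sequences found above.

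I do not expect a genuine obstacle: the argument is a single structural remark (all cycles meet the center, so composite cycles have order at most $2$) together with the evaluation of $1\times1$ and $2\times2$ minors and one short bookkeeping step for the $*$-product. The only points requiring care are the case split between $k=2$ (where a lone $2$-cycle forces $t_2=\Ap$ rather than $\Sp$) and $k\ge3$, and invoking the sign semi-stability conditions to pin down $p_{cc}=-$ in the $\Sl_k$ case and $p_{cl_i}p_{l_ic}=-$ throughout.
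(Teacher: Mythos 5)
Your proof is correct and follows essentially the same route as the paper's (much terser) argument: observe that every cycle of $\Gamma(\sP)$ meets the center, so there are no composite cycles of order three or more and $t_m(\sP)=\NN$ for $m\ge 3$, then compute $t_1$ and $t_2$ directly and match with the $*$-product from Proposition~\ref{prop:reduce}. Your write-up just fills in the details the paper leaves as ``straightforward to verify.''
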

\begin{proof}
There are no composite cycles of order $3$ or higher in the digraphs of all the above mentioned sign patterns, and using the tables in Section \ref{s:basic} it is straightforward to verify the values of $t_1(\sP)$ and $t_2(\sP)$.
\end{proof}

\begin{lem}\label{prop:semismall}
Let $\sP$ be a $k\times k$ sign semi-stable pattern.
\begin{itemize}
\item If\/ $\Gamma(\sP)$ is $P_2\dunion P_2$ or $P_4$, then $\sepr(\sP)=\NN\Sp\NN\Ap$.
\item If\/ $\Gamma(\sP)$ is $\Pl_2\dunion\Pl_2$, $\Pl_2\dunion P_2$, $\Pll_2\dunion\Pl_2$, $\Pll_2\dunion P_2$, or $\Pl_4$, then $\sepr(\sP)=\Sm\Sp\Sm\Ap$.
\item If\/ $\Gamma(\sP)$ is $\Pll_2\dunion\Pll_2$ or $P^\ell_4$, then $\sepr(\sP)=\Am\Ap\Am\Ap$. 
\item If\/ $\Gamma(\sP)$ is $\Pl_2\dunion\Pl_1$, $P_2\dunion \Pl_1$, or $\Pl_3$, then $\sepr(\sP)=\Sm\Sp\Am$.
\item If\/ $\Gamma(\sP)$ is $\Pll_2\dunion\Pl_1$ or $P^\ell_3$, then $\sepr(\sP)=\Am\Ap\Am$.
\end{itemize}
If\/ $\Gamma(\sP)=\Gamma_1\dunion\Gamma_2$ with $\Gamma_1\in\{P_2,\Pl_2,\Pll_2\}$ and $\Gamma_2\in\{P_2,\Pl_2,\Pll_2,\Pl_1\}$, then there is a sign pattern $\sP'$ such that $\sepr(\sP')=\sepr(\sP)$ and $\sP$ is not sign semi-stable.
\end{lem}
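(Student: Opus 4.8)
The plan is to close up the two components of $\Gamma(\sP)$ into a single directed cycle $C$ through all $n:=n(\sP)$ vertices, choosing the signs of the added arcs so that no principal minor changes; this is the technique of Lemma~\ref{lem:addcycle}, but with two arcs added rather than one, since $\Gamma(\sP)$ is disconnected. Because $\Gamma_1$ has two vertices and $\Gamma_2$ has one or two, $n\in\{3,4\}$. Relabel so that $\Gamma_1$ is supported on $\{1,2\}$ and $\Gamma_2$ on $\{3\}$ (if $n=3$) or on $\{3,4\}$ (if $n=4$); then $\Gamma(\sP)$ contains the arc $(1,2)$, and when $n=4$ also the arc $(3,4)$. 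Let $\sP'$ be obtained from $\sP$ by making the $(2,3)$-entry and the $(n,1)$-entry nonzero, i.e.\ by adding the arcs $(2,3)$ and $(n,1)$, with signs chosen so that the $n$-cycle $C:1\to 2\to\cdots\to n\to 1$ has arc-sign product $\prod(C)=-$; this is possible because exactly two of the $n$ arcs of $C$ are being created and their signs are free. Then $\Gamma(\sP')$ contains the cycle $C$ of length $n\ge 3$, so $\sP'$ fails condition $(\gamma)$ of the characterization of sign semi-stability and hence is not sign semi-stable.

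To show $\sepr(\sP')=\sepr(\sP)$, I will prove $\det(\sP'[\alpha])=\det(\sP[\alpha])$ for every $\alpha\subseteq V:=\{1,\dots,n\}$, each being a well-defined sign. The key observation is that in $\Gamma(\sP')$ the only cycle through either of the two new arcs is $C$ itself: starting at the head of a new arc and extending the path while avoiding already-visited vertices, at each step every available out-arc other than the one used in $C$ is a loop at the current vertex or runs into an already-visited vertex (when $n=4$ the old arc $(4,3)$ falls into this latter category), so the extension is forced all the way around $C$. Hence every composite cycle of $\Gamma(\sP')$ that uses a new arc must contain $C$ among its cycles, and since $C$ spans $V$ such a composite cycle equals $C$; meanwhile no composite cycle of $\Gamma(\sP)$ uses a new arc. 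Therefore the standard expressions of $\det(\sP'[\alpha])$ and $\det(\sP[\alpha])$ agree for every $\alpha\subsetneq V$, while $\det(\sP'[V])$ has standard expression equal to that of $\det(\sP[V])$ plus the single extra term contributed by the permutation $C$, namely $\sgn(C)\prod(C)=(-)^{n+1}\prod(C)$, the signed cycle product of $C$. Now $\Gamma(\sP)$ is block diagonal: the block on $\Gamma_1$ is a $2\times2$ sign semi-stable pattern containing a $2$-cycle, so its determinant is $(-)^{2}=+$, and the block on $\Gamma_2$ has determinant $(-)^{n-2}$ (a negative loop when $n=3$, a $2\times2$ sign semi-stable pattern with a $2$-cycle when $n=4$); hence $\det(\sP[V])=(-)^{n}$. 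By our choice of signs the signed cycle product of $C$ is $(-)^{n+1}\prod(C)=(-)^{n}$ as well, and adding two terms of sign $(-)^{n}$ is unambiguous and again yields $(-)^{n}$, so $\det(\sP'[V])=(-)^{n}=\det(\sP[V])$. Thus every principal subpattern of $\sP'$ has a signed determinant equal to that of the corresponding principal subpattern of $\sP$; in particular $\sP'$ has a unique sepr-sequence, and for each $k$ the set of signs of the order-$k$ principal minors is the same for $\sP'$ as for $\sP$, so $\sepr(\sP')=\sepr(\sP)$.

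The only step that requires care is the claim that $C$ is the unique cycle through a new arc; this is a routine finite traversal given the simple shape of $\Gamma(\sP)$, essentially the bookkeeping in the proof of Lemma~\ref{lem:addcycle} carried out for two added arcs instead of one, and once it is in hand the rest is the determinant computation above. (As an alternative one could dispatch the eight combinations of $\Gamma_1,\Gamma_2$ one at a time, connecting the components into a connected strong ditree with loops and computing its sepr-sequence directly, but the uniform argument above avoids the case split.)
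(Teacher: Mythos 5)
Your construction for the final claim is correct, but it is a genuinely different route from the paper's. The paper handles that claim by reduction: each disconnected digraph $\Gamma_1\dunion\Gamma_2$ shares its bullet (hence its sepr-sequence) with a connected digraph among $P_4$, $\Pl_4$, $P^\ell_4$, $\Pl_3$, $P^\ell_3$, each of which contains $P_4$ or $\Pl_3$ as a subdigraph, so Lemma~\ref{lem:addcycle} already supplies a non-sign-semi-stable pattern $\sP'$ with that same sepr-sequence. You instead modify the disconnected pattern $\sP$ itself, adding two arcs to close a spanning $n$-cycle $C$ with $\prod(C)=-$; your verification that $C$ is the only cycle through either new arc, that consequently only the order-$n$ determinant acquires a new term, and that this term's sign $(-)^{n+1}\prod(C)=(-)^n$ agrees with the existing signed determinant of the block-diagonal pattern, is sound, so all signed principal determinants are preserved while condition $(\gamma)$ of sign semi-stability fails. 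In effect you prove a two-arc analogue of Lemma~\ref{lem:addcycle}; this buys a self-contained argument that does not depend on the explicit sepr-sequences in the bullets nor on Lemma~\ref{lem:addcycle}, whereas the paper's reduction is shorter precisely because it reuses both. Two small remarks: your proposal leaves the five bullet-item sepr-sequence computations entirely unaddressed — the paper also only asserts these are straightforward, but a complete proof of the lemma still needs them (and the paper's own proof of the final claim, as well as Theorem~\ref{thm:semirecog}, relies on them), so you should at least record that verification; and the parenthetical count of ``eight combinations'' of $(\Gamma_1,\Gamma_2)$ should be nine, though this does not affect your uniform argument.
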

\begin{proof}
It is straightforward to verify the sepr-sequences for the listed small order $k$ sign semi-stable patterns.  Whenever $\Gamma$ is $P_4$, $\Pl_4$, $P^\ell_4$, $\Pl_3$, or $P^\ell_3$, it contains $P_4$ or $\Pl_3$ as a subdigraph, so the existence of $\sP'$ follows from Lemma~\ref{lem:addcycle}.
\end{proof}

\begin{thm}\label{thm:semirecog}
Suppose $\sP$ is  an $n\x n$ sign semi-stable pattern.  Then every sign pattern that has a unique sepr-sequence equal to $\sepr(\sP)$ is sign semi-stable if and only if $t_k(\sP)=\NN$ for $k\geq 3$ or $n\leq 2$.
\end{thm}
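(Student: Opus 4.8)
The statement is an ``if and only if'', so there are two directions.

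\emph{The easy direction} is ($\Leftarrow$): if $t_k(\sP)=\NN$ for all $k\geq 3$ or $n\leq 2$, then any sign pattern $\sP'$ with the unique sepr-sequence $\sepr(\sP)$ must be sign semi-stable. Since $\sP$ is sign semi-stable, $t_1(\sP)\in\{\NN,\Sm,\Am\}$ and $t_2(\sP)\in\{\NN,\Sp,\Ap\}$ by Corollary~\ref{cor:sss}\eqref{c:sss1}. So any $\sP'$ with $\sepr(\sP')=\sepr(\sP)$ satisfies the hypotheses of Lemma~\ref{lem:semidoable}, which gives exactly that $\sP'$ is sign semi-stable.

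\emph{The hard direction} is ($\Rightarrow$), which I will prove by contrapositive: if it is \emph{not} the case that ($t_k(\sP)=\NN$ for all $k\geq 3$, or $n\leq 2$), then there exists a sign pattern $\sP'$ with $\sepr(\sP')=\sepr(\sP)$ that is \emph{not} sign semi-stable. So assume $n\geq 3$ and some $t_k(\sP)\neq \NN$ with $k\geq 3$. By Remark~\ref{rem:simp} we may pass to the simplified pattern $\sQ$ of $\sP$, which is a strong diforest by property \eqref{ssc01}, and has $\sepr(\sQ)=\sepr(\sP)$; it suffices to produce a non-sign-semi-stable $\sP'$ with $\sepr(\sP')=\sepr(\sQ)$. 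The plan is to show that, under the stated hypotheses, $\Gamma(\sQ)$ must contain a large enough structure to invoke the ``add-a-cycle'' machinery of Lemmas~\ref{lem:addcycle} and~\ref{prop:semismall}. Concretely: since $t_k(\sQ)\neq\NN$ for some $k\geq 3$, the diforest $\Gamma(\sQ)$ has a composite cycle of order $k\geq 3$; such a composite cycle is a disjoint union of $2$-cycles (arising from edges of the underlying forest) and loops. I would argue combinatorially that if $\Gamma(\sQ)$ contains neither $P_4$ nor $\Pl_3$ as a subdigraph, then every connected component that contributes a $2$-cycle is essentially a single edge (possibly with loops) or a star $S_k$/$\Sl_k$, and the components contributing loops are isolated looped vertices; one then checks that the only way to assemble a composite cycle of order $\geq 3$ from such pieces is to fall into one of the finitely many cases enumerated in Lemma~\ref{prop:semismall} (the $P_2$/$\Pl_2$/$\Pll_2$ joined with $P_2$/$\Pl_2$/$\Pll_2$/$\Pl_1$ list, plus the $P_4$, $\Pl_4$, $P^\ell_4$, $\Pl_3$, $P^\ell_3$ cases), each of which admits the desired $\sP'$. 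In the remaining case $\Gamma(\sQ)$ contains $P_4$ or $\Pl_3$ outright, and Lemma~\ref{lem:addcycle} directly produces $\sP'$.

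\emph{Expected main obstacle.} The crux is the combinatorial case analysis showing that ``$\Gamma(\sQ)$ is a strong diforest with a composite cycle of order $\geq 3$ but containing no $P_4$ and no $\Pl_3$ subdigraph'' forces $\Gamma(\sQ)$ into the short explicit list of Lemma~\ref{prop:semismall}. One has to be careful that a composite cycle of order $\geq 3$ needs either (a) two disjoint $2$-cycles (forcing two disjoint edges in the underlying forest, i.e.\ a matching of size $2$), which without a $P_4$ means the two edges lie in different components or share no path — together with the loop bookkeeping this yields the $P_2/\Pl_2/\Pll_2$ pairings — or (b) one $2$-cycle plus a loop on a third vertex, which without a $\Pl_3$ means the looped vertex is in a different component, giving the $P_2\dunion\Pl_1$-type cases, or (c) three or more loops, handled by the diagonal-only subpattern. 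Enumerating these while tracking which endpoints carry loops (to land exactly on the $\Pl$, $\Pll$, $\Pl_1$ labels) is the fiddly part; everything after that is a citation to Lemma~\ref{prop:semismall} or Lemma~\ref{lem:addcycle}.
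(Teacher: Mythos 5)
Your plan is correct and essentially reproduces the paper's proof: the easy direction is Lemma~\ref{lem:semidoable} (with exactly your justification of its hypotheses via Corollary~\ref{cor:sss}), and the hard direction is the paper's argument run in contrapositive form, resting on the same ingredients — Lemma~\ref{lem:addcycle} to rule out $P_4$ and $\Pl_3$, the resulting structure of a star diforest with loops only at centers or on edges/isolated vertices, and Lemmas~\ref{prop:semistars} and~\ref{prop:semismall} for the small configurations. When writing it up, just make explicit the assembly step the paper also uses implicitly: the small-order lemmas apply to an entire digraph of order three or four, so you must combine the sepr-preserving replacements ($S_k$ or $\Sl_k$ traded for $P_2\oplus O_{k-2}$ or $\Pl_2\oplus O_{k-2}$, and two copies of $\Pl_1$ traded for one $\Pll_2$) with Proposition~\ref{prop:reduce} to convert a non-sign-semi-stable pattern on the relevant few vertices into a non-sign-semi-stable pattern of order $n$ having the same unique sepr-sequence as $\sP$.
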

\begin{proof}
The sufficient condition follows from Lemma~\ref{lem:semidoable}.

Let $\sepr(\sP)=t_1\cdots t_n$.  Suppose every sign pattern with the unique sepr-sequence $t_1\cdots t_n$ is sign semi-stable.  Since the simplified sign pattern of $\sP$ has the same sepr-sequence, we may assume $\sP$ is simplified and $\Gamma=\Gamma(\sP)$ is a strong diforest.  By Lemma~\ref{lem:addcycle}, $\Gamma$ does not contain a $P_4$ or $\Pl_3$ as a subdigraph.  Thus, the underlying graph of $\Gamma$ is a disjoint union of stars.  
Since a star of order three or more with a loop on a  vertex that is not the center has a $\Pl_3$ subgraph, we may assume $\Gamma$ is a disjoint union of $a$ copies of $S_k$ (with possibly different $k$ values), $b$ copies of $\Sl_k$ (with possibly different $k$ values), $c$ copies of $\Pll_2$, $d$ copies of $P_1$, and $e$ copies of $\Pl_1$.  By Lemma~\ref{prop:semistars}, we may assume 
\[\Gamma=aP_2\dunion b\Pl_2\dunion\, c\!\Pll_2\dunion\, dP_1\dunion e\Pl_1.\]  
Since 
\[\sepr\begin{bmatrix}-&0\\0&-\end{bmatrix}=\sepr\begin{bmatrix}-&+\\-&-\end{bmatrix}=\Am\Ap,\]
we may trade two copies of $\Pl_1$ for one copy of $\Pll_2$ and assume $e\leq 1$.  
If $a+b+c+e\ge 2$, then by Lemma~\ref{prop:semismall} there would be a sign pattern $\sP'$ that is not sign semi-stable and $\sepr(\sP')=\sepr(\sP)$.
So  $a+b+c+e\leq 1$ and $\Gamma=\Gamma'\dunion dP_1$ with $\Gamma'\in \{P_2,\Pl_2,\Pll_2,\Pl_1\}$.  In all possible cases, $\Gamma$ does not contain any composite cycles of order $3$ or higher, so $t_k=\NN$ for $k\geq 3$ or $n\leq 2$.
\end{proof}

Recall (e.g., \cite[Fact 42.4.3]{HLA2SP}) that an irreducible $n\x n$ sign pattern $\sP=[p_{ij}]$ is sign stable if and only if  it is sign semi-stable and in addition
\bit
\item[($\delta$)] $\sP$ is not combinatorially singular, i.e., there is a nonzero term in the determinant of $\sP$.
\item[($\varepsilon$)]  There does not exist a nonempty subset $\beta$ of $[n]$ such that each diagonal
entry of $\sP[\beta]$ is zero, each row of $\sP[\beta]$ contains at least one nonzero entry, and
no row of $\sP[\OL{\beta},\beta]$ contains exactly one nonzero entry, where $\OL{\beta}=[n]\setminus\beta$.
\eit
  Unfortunately, sepr-sequences do not distinguish between sign stable and sign semi-stable patterns as the next example shows.
  
  \begin{ex}
	Let 
\[\sP=\begin{bmatrix}
- & + & 0 & 0 & 0 \\
- & 0 & + & 0 & 0 \\
0 & - & 0 & + & 0 \\
0 & 0 & - & 0 & + \\
0 & 0 & 0 & - & 0 \\
\end{bmatrix}\text{ and }\sQ=\begin{bmatrix}
0 & + & 0 & 0 & 0 \\
- & 0 & + & 0 & 0 \\
0 & - & - & + & 0 \\
0 & 0 & - & 0 & + \\
0 & 0 & 0 & - & 0 \\
\end{bmatrix}.\]
Both $\sP$ and $\sQ$ are sign semi-stable; $\sP$ is sign stable while $\sQ$ is not.  Since $\sepr(\sP)=\sepr(\sQ)=\Sm\Sp\Sm\Sp\Am$, the sepr-sequence is not able to distinguish a sign stable pattern from other sign semi-stable patterns.\end{ex}

An sepr-sequence satisfies the conclusion of condition \eqref{c:sss4} in Corollary \ref{cor:sss} if and only if it is of the form $\NN\,\OL{\Sp\NN}\,\OL{\NN}$  (any $n$) or $\NN\,\OL{\Sp\NN}\,{\Ap}$ (even $n$).   
\begin{prop}\label{sssN} Every sepr-sequence  $\NN\,\OL{\Sp\NN}\,\OL{\NN}$ or $\NN\,\OL{\Sp\NN}\,\Ap$  is attained by a sign semi-stable pattern $\sP$.
\end{prop}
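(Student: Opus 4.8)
The plan is to realize every sequence of the form $\NN\,\OL{\Sp\NN}\,\OL{\NN}$ or $\NN\,\OL{\Sp\NN}\,\Ap$ by exhibiting an explicit sign semi-stable pattern and invoking Corollary~\ref{cor:sss}\eqref{c:sss4}, which already tells us the sepr-sequence of a sign semi-stable pattern with $t_1=\NN$ is completely controlled by $\match(\D(\sQ))$ and whether $\D(\sQ)$ has a perfect matching. So the problem reduces to: given the target sequence, find a simplified sign semi-stable pattern $\sQ$ whose digraph is a strong diforest with the prescribed matching number and the prescribed presence or absence of a perfect matching. The natural building block is the doubly directed path $P_m$ (loopless), which is sign semi-stable, whose underlying graph is a path on $m$ vertices, and which has $\match(P_m)=\lfloor m/2\rfloor$; a disjoint union of such paths is again sign semi-stable by the characterization via $(\alpha),(\beta),(\gamma)$, and its matching number is additive.

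First I would read off from the target sequence the relevant parameters. A sequence $\NN\,\OL{\Sp\NN}\,\Ap$ of even length $n$ forces (by Corollary~\ref{cor:sss}\eqref{c:sss4}) that $\D(\sQ)$ has a perfect matching, i.e.\ $2\match(\D(\sQ))=n$; for this I take $\sQ$ to be the pattern of $P_n$ (one doubly directed path on all $n$ vertices), using signs so that every $2$-cycle product is negative, e.g.\ $p_{i,i+1}=+$ and $p_{i+1,i}=-$. Then $\match=n/2$, there is a perfect matching, $t_1=\NN$ since all diagonal entries are zero, and Corollary~\ref{cor:sss}\eqref{c:sss4} gives exactly $\NN\,\OL{\Sp\NN}\,\Ap$. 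For a sequence $\NN\,\OL{\Sp\NN}\,\OL{\NN}$, write it as $\NN$ followed by $\Sp\NN$ repeated $r\ge 0$ times followed by a block of $q\ge 0$ extra $\NN$'s, so $n=1+2r+q$; here I need $\match(\D(\sQ))=r$ and I need $n>2r$ (strict) so that $t_n=\NN$ rather than $\Ap$, i.e.\ $q\ge 1$ whenever the trailing $\OL{\NN}$ is nonempty — and if $q=0$ the sequence is actually $\NN\,\OL{\Sp\NN}$ of odd length, still needing $\match=r$ with $n=2r+1$ odd, which is automatically $>2r$. In all these cases I take $\sQ = P_{2r}\,\dot\cup\,$ (isolated vertices), i.e.\ the union of one doubly directed path on $2r$ vertices with $n-2r\ge 1$ loopless isolated vertices (equivalently a zero pattern on those coordinates). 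This $\sQ$ is sign semi-stable, $\D(\sQ)$ is a strong diforest, $\match(\D(\sQ))=r$, there is no perfect matching because of the isolated vertices (or because $n$ is odd), and $t_1=\NN$; Corollary~\ref{cor:sss}\eqref{c:sss4} then yields precisely the target sequence.

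I would then note the two degenerate edge cases explicitly so the argument is complete: when $r=0$ the sequence is $\NN\OL{\NN}$, realized by the $n\times n$ zero pattern (sign semi-stable, all cycles absent); and the shortest nontrivial case $\NN\Sp$ (even $n=2$, trailing $\Ap$ absent) is realized by $P_2$ with signs giving a negative $2$-cycle product, as in Lemma~\ref{prop:semistars}. After checking that the chosen $\sQ$ genuinely satisfies $(\alpha)$ (all diagonal entries $0$), $(\beta)$ ($p_{ij}p_{ji}=-$ on the path edges, $0$ elsewhere), and $(\gamma)$ (the only cycles are the $2$-cycles along the path), sign semi-stability is immediate, and Remark~\ref{rem:simp} together with property \eqref{ssc01} confirms $\sQ$ is its own simplified pattern, so no subtlety arises in applying Corollary~\ref{cor:sss}\eqref{c:sss4}.

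The main obstacle — really the only place care is needed — is the bookkeeping between the combinatorial parameters of the target sepr-sequence and the matching data of $\D(\sQ)$: one must be sure that a sequence $\NN\,\OL{\Sp\NN}\,\OL{\NN}$ with a nonempty trailing $\OL{\NN}$ is handled so that $n>2\match(\D(\sQ))$ strictly (forcing $t_n=\NN$, not $\Ap$), while the even-length sequences ending in $\Ap$ are handled with $n=2\match(\D(\sQ))$ exactly, and that the parity constraints built into Corollary~\ref{cor:sss}\eqref{c:sss4} ($t_k=\NN$ for odd $k$) are consistent with the fact that in $\NN\,\OL{\Sp\NN}$ the $\Sp$'s sit in even positions. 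Beyond verifying this alignment, no computation is required: each claimed sepr-sequence is delivered wholesale by Corollary~\ref{cor:sss}\eqref{c:sss4}.
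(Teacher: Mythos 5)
Your construction is correct and is essentially the paper's argument: both proofs reduce the statement to Corollary~\ref{cor:sss}\eqref{c:sss4} by exhibiting a loopless sign semi-stable pattern (skew-symmetrically signed, so every $2$-cycle product is negative) whose digraph has the matching number dictated by the position of the last $\Sp$/$\Ap$, and with or without a perfect matching according to whether the sequence ends in $\Ap$. The only substantive difference is the choice of digraph: the paper takes a single loopless \emph{strong ditree} on all $n$ vertices with $\match(\D)=m/2$, which makes the realizing pattern irreducible (this is what the surrounding discussion uses when it says the sequences are attained by irreducible sign semi-stable patterns), whereas your $P_{2r}$ together with isolated vertices is reducible; for the proposition as stated this costs nothing, and your parameter bookkeeping ($n=2\match$ exactly for the $\Ap$-terminated sequences, $n>2\match$ otherwise) is right. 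One sentence should be deleted, however: your ``edge case'' $\NN\Sp$ is not an instance of either form (in $\NN\,\OL{\Sp\NN}\,\OL{\NN}$ every $\Sp$ is immediately followed by $\NN$), it cannot be an sepr-sequence at all since no sepr-sequence ends in $\Sp$, and the pattern $P_2$ you cite realizes $\NN\Ap$ by Lemma~\ref{prop:semistars}, not $\NN\Sp$. Since that case lies outside the proposition, its removal leaves the proof intact.
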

\bpf Let $m$ be the maximum $k$ such that $t_k\in\{\Sp,\Ap\}$. Choose a strong ditree $\D$ such that $\D$ has no loops and $\match(\D)=\frac m 2$.  Let $\sP$ be any sign pattern with skew-symmetric off-diagonal part and digraph $\D$.
\epf

Case \eqref{c:sss4} in Corollary \ref{cor:sss}, which assumes $t_1=\NN$, is quite different from the more general situation in a variety of ways.  At most one {\tt A} is possible in the sepr-sequence  of a sign pattern  that satisfies conditions  \eqref{c:sss1} -- \eqref{c:sss4} of Corollary \ref{cor:sss} but Lemma \ref{prop:semismall} shows it is possible to have more than one ${\tt A}$ in the sequence if the diagonal is not all zero. 

Proposition \ref{sssN} shows that any  sepr-sequence of a sign pattern that satisfies conditions \eqref{c:sss1} -- \eqref{c:sss4} of Corollary \ref{cor:sss} can be attained by an irreducible sign semi-stable pattern.  However, the next proposition 
shows there are sepr-sequences that satisfy \eqref{sss1} -- \eqref{sss34} of Theorem \ref{cor:sss}  but are not attainable by any sign semi-stable pattern.

\begin{prop}\label{prop:sss-no}  An sepr-sequence $t_1\cdots t_n$ having $t_1\in\{\Sp,\Sm,\Sb\}$, $t_{n-1}=\NN$, and $t_n\in\{\Ap,\Am\}$ cannot be attained by any sign pattern $\sP$ such that every principal subpattern has signed determinant and $\D(\sP)$ has no cycle of order three or more.
\end{prop}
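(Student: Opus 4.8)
The plan is a proof by contradiction. Suppose $\sP$ is an $n\times n$ sign pattern in which every principal subpattern has a signed determinant, $\D(\sP)$ has no cycle of length three or more, and $\SEPR(\sP)$ contains a sequence $t_1\cdots t_n$ with $t_1\in\{\Sp,\Sm,\Sb\}$, $t_{n-1}=\NN$, and $t_n\in\{\Ap,\Am\}$; fix $B\in Q(\sP)$ with $\sepr(B)=t_1\cdots t_n$. First I would extract two pieces of combinatorial data for free. Because $t_n\in\{\Ap,\Am\}$ we have $\det B\neq 0$, so the standard expression of $\det\sP$ has a nonzero term; equivalently $\D(\sP)$ has a composite cycle $\mathcal C$ covering all of $[n]$, and since $\D(\sP)$ has no cycle of length $\geq 3$, this $\mathcal C$ is a disjoint union of loops and $2$-cycles spanning $[n]$. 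Because $t_1\in\{\Sp,\Sm,\Sb\}$, at least one $1\times 1$ principal minor is nonzero, i.e.\ some diagonal entry $p_{vv}$ is nonzero, so $\D(\sP)$ has a loop at $v$.

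The heart of the argument is a local exchange at the loop vertex $v$. In $\mathcal C$ the vertex $v$ is covered either by the loop $(v,v)$ or by a $2$-cycle on a pair $\{u,v\}$ with $u\neq v$. In the first case $\mathcal C\setminus\{(v,v)\}$ is a composite cycle covering $[n]\setminus\{v\}$; put $w=v$. In the second case, delete that $2$-cycle and insert the loop $(v,v)$ (an arc of $\D(\sP)$ by the choice of $v$); the result is again a disjoint union of loops and $2$-cycles, now covering $[n]\setminus\{u\}$, since deleting the $2$-cycle frees both $u$ and $v$ before the loop is added back; put $w=u$. In either case $\det\sP[[n]\setminus\{w\}]$ has a nonzero term, and since $\sP[[n]\setminus\{w\}]$ has a signed determinant, $\det\sP[[n]\setminus\{w\}]\in\{+,-\}$. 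Hence $\det B[[n]\setminus\{w\}]\neq 0$, so $t_{n-1}(B)\neq\NN$, contradicting $t_{n-1}=\NN$. This contradiction proves the proposition.

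I do not anticipate a genuine obstacle; the only point requiring care is checking that the collection produced by the exchange is a bona fide composite cycle (pairwise vertex-disjoint cycles), which is immediate as noted. It is worth recording where the hypotheses enter: the bound on cycle lengths is exactly what guarantees that re-covering $v$ by its loop costs at most one additional vertex, so that an order-$(n-1)$ composite cycle results; the hypothesis $t_1\in\{\Sp,\Sm,\Sb\}$ is used only through the existence of one nonzero diagonal entry, so the same argument in fact shows the conclusion holds whenever $t_1\neq\NN$; and ``every principal subpattern has a signed determinant'' is used only to upgrade ``$\det\sP[[n]\setminus\{w\}]$ has a nonzero term'' to ``$\det\sP[[n]\setminus\{w\}]$ is a nonzero sign,'' valid for every realization.
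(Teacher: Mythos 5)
Your proof is correct and takes essentially the same route as the paper: both arguments extract from $t_n\in\{\Ap,\Am\}$ a spanning composite cycle consisting only of loops and $2$-cycles, use $t_1\in\{\Sp,\Sm,\Sb\}$ to obtain a loop at some vertex $v$, and perform the same exchange at $v$ (delete the loop if it is in the composite cycle, otherwise swap the $2$-cycle through $v$ for the loop) to produce an order-$(n-1)$ composite cycle, which together with the signed-determinant hypothesis contradicts $t_{n-1}=\NN$. The only difference is cosmetic: you merge the paper's two steps (first forcing the spanning composite cycle to have no loops, then swapping) into a single local exchange and skip its preliminary parity reduction.
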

\bpf 
Let $\sP$ be a sign  pattern such that every principal subpattern has signed determinant, $\D(\sP)$ has no cycle of order three or more, and
   $t_{n-1}(\sP)=\NN$. If $n$ is odd, then $t_n(\sP)=\NN$ by Theorem \ref{thm:sss}\eqref{c:sss34}.  So assume $n$ is even.  If $t_n(\sP)\in\{\Ap,\Am\}$, then  there is a composite cycle of order $n$.  Since $\Gamma(\sP)$ has no cycle of length $3$ or higher, we may assume the composite cycle is composed of $a$ $2$-cycles and $b$ $1$-cycles with $2a+b=n$.  If $b\geq 1$, then there is a composite cycle of order $n-1$ and $t_{n-1}(\sP)\neq \NN$, so $t_{n-1}(\sP)=\NN$ implies $b=0$, $2a=n$, and there are disjoint $2$-cycles that cover the vertices of $\D(\sP)$.

Suppose $t_1(\sP)\in\{\Sp,\Sm,\Sb\}$.  Then $\D(\sP)$ has a loop, say it is on vertex $v$.  Take the composite cycle of order $n$, remove the $2$-cycle that covers $v$, and include the loop in the new composite cycle.  This yields a new composite cycle of order $n-1$, a contradiction.  Therefore, such an sepr-sequence is impossible for a sign pattern $\sP$ such that every principal subpattern has signed determinant and $\D(\sP)$ has no cycle of order three or more.
\epf

For each sepr-sequence that  Proposition \ref{prop:sss-no} establishes cannot be attained by a sign pattern $\sP$ such that every principal subpattern has signed determinant and $\D(\sP)$ has no cycle of order three or more, Corollary \ref{cor:unique}.\ref{dircycloops} provides an example of a sign pattern with an $n$-cycle that does realize the sequence.

A sign pattern $\sP$ is {\em signed cycle positive} if every signed cycle product in $\sP$ is positive. 
The next result is immediate from the definition and Theorems \ref{thm:Aforever} and \ref{thm:sss}.

\begin{cor}\label{cor:scp} Let $\sP$ be a signed cycle positive pattern. Then 
\ben[{\rm (1)}]

\item $\sP$ has a unique sepr-sequence in which each term is one of $\NN,\,\Sp,$ or $\Ap$. 
\item\label{scp3} If  $t_k(\sP)=\Ap$, then $t_{j}(\sP)=\Ap$ for any $j\geq k$.

\hspace{-6mm}If in addition $\D(\sP)$ has no cycle of order three or more, then
\item\label{scp34} If $t_k(\sP)=\NN$, then $t_{k+2}(\sP)=\NN$; moreover, if $k$ is even, then $t_j(\sP)=\NN$ for any $j\geq k$.
\item\label{scp4} Suppose  $t_1(\sP)=\NN$ and let $\sQ$ be the simplified pattern of $\sP$. Then, $t_k(\sP)=\Sp$ if   $k<n$  even with $k\le 2\match(\D(\sQ))$, $t_n(\sP)=\Ap$ if $\D(\sQ)$ has a perfect matching, and $t_k=\NN$ for $k$ odd or $k> 2\match(\D(\sQ))$.
\een
 \end{cor}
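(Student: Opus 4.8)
\textbf{Proof proposal for Corollary~\ref{cor:scp}.}

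The plan is to obtain each of the four conclusions by specializing earlier general results to the signed cycle positive hypothesis; the corollary is essentially a matter of checking that each claimed strengthening (replacing the symbol sets $\{\NN,\Sp,\Sb\}$, $\{\Ap,\Am,\Ab\}$, etc.~by their ``positive'' sub-members) is forced once all signed cycle products are positive. First I would record the basic observation that a signed cycle positive pattern has a signed determinant for every principal subpattern: in $\det\sP[\alpha]$, every nonzero term corresponds to a composite cycle on $\alpha$, and its contribution to the standard expression is exactly the signed cycle product of that composite cycle (this is the standard translation between the permutation expansion and cycle structure, and the parity factor $(-)^{k+1}$ per $k$-cycle accounts for $\sgn(\sigma)$). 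Since every such signed cycle product is positive, either $\det\sP[\alpha]$ has at least one nonzero term and then it is $+$, or it has none and then it is $0$. In particular no principal subpattern is ambiguous, so Corollary~\ref{prop:unique} (via Proposition~\ref{prop:fix-k}) gives that $\sP$ has a unique sepr-sequence. Moreover, for each $k$, either every $k\times k$ principal subpattern is combinatorially singular, giving $t_k(\sP)=\NN$, or some $k\times k$ principal subpattern has determinant $+$; combined with the fact that no $k\times k$ principal subpattern can ever have a negative determinant, this forces $t_k(\sP)\in\{\NN,\Sp,\Ap\}$, which is part~(1).

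For part~\eqref{scp3} I would apply Theorem~\ref{thm:Aforever}: $\sP$ has a unique sepr-sequence and (as just shown) no ambiguous principal subpattern, so $t_k(\sP)\in\{\Ap,\Am,\Ab\}$ implies $t_{k+1}(\sP)\in\{\Ap,\Am,\Ab\}$, and induction extends this to all $j\geq k$. But by part~(1) the only member of $\{\Ap,\Am,\Ab\}$ available to a signed cycle positive pattern is $\Ap$, so $t_k(\sP)=\Ap$ forces $t_j(\sP)=\Ap$ for all $j\geq k$.

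For parts~\eqref{scp34} and \eqref{scp4} I would add the hypothesis that $\D(\sP)$ has no cycle of length three or more and invoke Theorem~\ref{thm:sss}: since every principal subpattern of $\sP$ has a signed determinant and $\D(\sP)$ has no long cycles, statements \eqref{sss34} and \eqref{sss4} of Theorem~\ref{thm:sss} apply verbatim. Statement \eqref{sss34} is exactly \eqref{scp34}. For \eqref{scp4}, Theorem~\ref{thm:sss}\eqref{sss4} gives $t_k(\sP)\in\{\Sp,\Sm,\Sb\}$ for $k<n$ even with $k\le 2\match(\D(\sQ))$, $t_n(\sP)\in\{\Ap,\Am\}$ when $\D(\sQ)$ has a perfect matching, and $t_k(\sP)=\NN$ otherwise; intersecting these symbol sets with $\{\NN,\Sp,\Ap\}$ from part~(1) collapses them to $\Sp$, $\Ap$, and $\NN$ respectively. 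Here one should also note that the simplified pattern $\sQ$ of a signed cycle positive pattern is again signed cycle positive (removing arcs not in any cycle changes no cycle product), and that $\SEPR(\sQ)=\SEPR(\sP)$ by Remark~\ref{rem:simp}, so working with $\sQ$ is legitimate. I do not anticipate a genuine obstacle: the only mild care needed is the cycle-product/sign-of-permutation bookkeeping in the first paragraph and making sure the symbol-set intersections are carried out correctly; everything else is a direct citation of Theorems~\ref{thm:Aforever} and \ref{thm:sss} and Corollary~\ref{prop:unique}.
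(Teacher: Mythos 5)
Your proposal is correct and follows the paper's intended route exactly: the paper dispatches this corollary as ``immediate from the definition and Theorems~\ref{thm:Aforever} and \ref{thm:sss}'', and your argument supplies precisely the missing bookkeeping --- every nonzero term of any principal minor equals a signed composite cycle product, hence is $+$, so no principal subpattern is ambiguous, Corollary~\ref{prop:unique} gives uniqueness with terms in $\{\NN,\Sp,\Ap\}$, and Theorems~\ref{thm:Aforever} and \ref{thm:sss} then yield parts \eqref{scp3}--\eqref{scp4} after intersecting symbol sets. No gaps; the cycle-sign computation $\sgn(\sigma)=\prod(-)^{k_i+1}$ is handled correctly.
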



\section{Sepr-sequences for symmetric nonnegative matrices and sign patterns}\label{s:nnsym}

In this section we return to symmetric matrices, for which the study of sepr-sequences of matrices was introduced in \cite{Xavier17}, but focus on nonnegative matrices, and also study symmetric nonnegative sign patterns.   


\subsection{Sepr-sequences for symmetric nonnegative matrices}\label{ss:nnmtx}
The inverse eigenvalue problem for nonnegative matrices (NIEP) asks for all possible spectra of nonnegative matrices and has received much attention; see, e.g., \cite{HLA2IEP,Smigoc04} and the references therein.  Since the characteristic polynomial is enough to determine the spectrum, the principal minor assignment problem for nonnegative matrices is a refinement of the NIEP.  Understanding sepr-sequences for symmetric nonnegative matrices would be a useful step for these problems.

\begin{obs}
\label{abuden}
For a nonnegative matrix, the sepr-sequence must start with $\Ap$, $\NN$, or $\Sp$.
\end{obs}

Proposition~\ref{prop:forbsepr} collects some restrictions on sepr-sequences that can be attained by real symmetric matrices established in \cite{Xavier17}.  As noted in the beginning of Section \ref{s:basic}, matrix symmetry is necessary for many of these restrictions to apply.

\begin{prop}\label{prop:forbsepr}
Let $t_1\cdots t_n$ be the sepr-sequence of a Hermitian matrix.  Then the following cannot occur:
\begin{itemize}
\item $t_k=t_{k+1}=\NN$ but $t_j\neq \NN$ for some $j>k$. {\rm \cite[Theorem 2.3 ($\NN\NN$ Theorem)]{Xavier17}}
\item $\NN\Ab\cdots$, $\NN\Ap\cdots$, $\NN\Sb\cdots$, $\NN\Sp\cdots$, and $\Sp\Ap\cdots$.  {\rm \cite[Proposition 3.1]{Xavier17}}
\item $\cdots\Ab\NN\cdots$, $\cdots\NN\Ab\cdots$, and $\cdots\Sb\NN\NN\cdots$. {\rm \cite[Corollary 3.3 and Theorem 3.4]{Xavier17}} 
\end{itemize}
\end{prop}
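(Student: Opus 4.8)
The plan is to treat Proposition~\ref{prop:forbsepr} as a bookkeeping statement: each bullet is an instance of a result already proved in \cite{Xavier17}, so the proof consists of attributing each item and checking that the hypothesis ``Hermitian matrix'' is exactly what the cited argument uses. I would attribute the first bullet to the $\NN\NN$ Theorem \cite[Theorem 2.3]{Xavier17}, the five forbidden initial segments in the second bullet to \cite[Proposition 3.1]{Xavier17}, and the three forbidden subsequences in the third bullet to \cite[Corollary 3.3 and Theorem 3.4]{Xavier17}. All of these are stated in \cite{Xavier17} for real symmetric (indeed Hermitian) matrices, so strictly speaking nothing more is needed; but I would also recall the short mechanisms behind them so that the collection is self-contained.

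For the $\NN\NN$ Theorem: if $t_k=t_{k+1}=\NN$ and some $(k{+}2)\times(k{+}2)$ principal submatrix $B$ of $A$ were nonsingular, then Jacobi's determinantal identity converts the vanishing of every order-$k$ and order-$(k{+}1)$ principal minor of $B$ into the vanishing of every order-$1$ and every order-$2$ principal minor of $B^{-1}$; but a Hermitian matrix with zero diagonal and every $2\times 2$ principal minor equal to zero is the zero matrix, since $\det\mtx{0 & \overline z\\ z & 0}=-|z|^2$ forces every off-diagonal entry of $B^{-1}$ to be zero. This contradicts $B$ being nonsingular, so $t_{k+2}=\NN$, and iterating gives $t_j=\NN$ for all $j\ge k$. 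For the second bullet, $t_1=\NN$ makes every diagonal entry zero, so every $2\times2$ principal minor equals $-|z|^2\le 0$ and $t_2\in\{\NN,\Sm,\Am\}$, ruling out $\NN\Ab$, $\NN\Ap$, $\NN\Sb$, and $\NN\Sp$; the case $\Sp\Ap$ needs one extra line, namely that $t_1=\Sp$ produces an index $i$ with $a_{ii}=0$ and an index $j$ with $a_{jj}>0$, whence $\det A[\{i,j\}]=-|a_{ij}|^2\le 0$ and $t_2\ne\Ap$. For the third bullet, the obstructions $\cdots\Ab\NN\cdots$, $\cdots\NN\Ab\cdots$, and $\cdots\Sb\NN\NN\cdots$ follow by combining the $\NN\NN$ Theorem with the interlacing behaviour of the principal minors of a Hermitian matrix (Cauchy interlacing applied to the characteristic polynomials of principal submatrices), which forbids an $\NN$ term adjacent to a term all of whose minors are nonzero of both signs and forbids a $\Sb$ term being followed by two $\NN$'s; the details are in \cite{Xavier17}.

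I do not expect a genuine mathematical obstacle. Every tool invoked --- Jacobi's determinantal identity, Cauchy interlacing, the characterization of the rank of a Hermitian matrix by the largest order of a nonzero principal minor, and the computation $\det\mtx{0 & \overline z\\ z & 0}=-|z|^2$ --- holds verbatim for Hermitian matrices, which is precisely the point stressed after the proposition that symmetry (or Hermitian symmetry) is what these restrictions require; the analogues all fail for general square matrices, as the $\NN\NN\OL{\NN}\Ap$ and $\NN\Sm\Ap$ examples in Section~\ref{s:basic} illustrate. The only real task is the routine check that the hypotheses of each cited result match ``$A$ Hermitian'', which they do.
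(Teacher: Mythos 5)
Your proposal matches the paper, which states this proposition with no proof of its own, simply attributing the three bullets to the cited results of \cite{Xavier17} exactly as you do; your added sketches for the first two bullets (Jacobi plus the zero-diagonal $2\times 2$ computation, and $\det A[\{i,j\}]=-|a_{ij}|^2\le 0$) are correct but go beyond what the paper records. The only caveat is that your appeal to Cauchy interlacing for the third bullet is a guess at the mechanism rather than a verified argument, but since you, like the paper, ultimately rest that bullet on the citation, this does not affect correctness.
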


Note that no sepr-sequence starting with $\NN\Ab$, $\NN\Ap$, $\NN\Sb$, $\NN\Sp$, or $\Sp\Ap$ can be attained by any  nonnegative matrix (symmetry is not required), since all the signed $2$-cycle products are negative. 

\begin{prop}\label{PropDiag}
If the sepr-sequence of a symmetric nonnegative matrix starts with $\Ap$, then there is a nonnegative matrix with the same sepr-sequence and all diagonal entries equal to one.
\end{prop}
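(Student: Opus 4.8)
The plan is to use a positive diagonal congruence to rescale $A$ so that its diagonal becomes the all-ones vector, and to observe that such a congruence changes neither nonnegativity nor the signs of any principal minors, hence preserves the entire sepr-sequence.

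First I would record the one place where the hypothesis $t_1=\Ap$ is used: since every $1\times 1$ principal minor of $A$ is positive, we have $a_{ii}>0$ for every $i$. This makes the scalars $d_i:=a_{ii}^{-1/2}$ well-defined positive real numbers. Set $D=\diag(d_1,\dots,d_n)$ and $B=DAD$. Because $A$ is symmetric and $D$ is diagonal, $B$ is symmetric; because $A$ is (entrywise) nonnegative and $D$ has positive entries, $B$ is nonnegative; and the $i$-th diagonal entry of $B$ is $d_i^2 a_{ii}=1$, so $B$ has all diagonal entries equal to one.

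The key step is that $\sepr(B)=\sepr(A)$. For any $\alpha\subseteq[n]$ we have $B[\alpha]=D[\alpha]\,A[\alpha]\,D[\alpha]$, so by multiplicativity of the determinant
\[
\det B[\alpha]=\bigl(\det D[\alpha]\bigr)^2\det A[\alpha]=\Bigl(\prod_{i\in\alpha}a_{ii}^{-1}\Bigr)\det A[\alpha].
\]
The scalar $\prod_{i\in\alpha}a_{ii}^{-1}$ is strictly positive, so $\det B[\alpha]$ has the same sign as $\det A[\alpha]$ for every $\alpha$. Consequently, for each $k$ the set of signs occurring among the order-$k$ principal minors of $B$ coincides with that of $A$, and therefore $t_k(B)=t_k(A)$ for all $k$. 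Thus $B$ is a symmetric nonnegative matrix with all diagonal entries one and $\sepr(B)=\sepr(A)$, as desired.

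There is no substantial obstacle here: the only things to be careful about are (i) that the hypothesis $t_1=\Ap$ is exactly what guarantees each $a_{ii}>0$, so the square-root scaling is legitimate and keeps $D$ real and positive, and (ii) that a positive diagonal congruence simultaneously preserves entrywise nonnegativity and the sign of \emph{every} principal minor, which is immediate from $\det B[\alpha]=(\det D[\alpha])^2\det A[\alpha]$.
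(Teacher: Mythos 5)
Your proposal is correct and follows essentially the same route as the paper: scale by $D=\diag(a_{11}^{-1/2},\dots,a_{nn}^{-1/2})$, note $DAD$ is symmetric, nonnegative, with unit diagonal, and observe that each principal minor is multiplied by the positive factor $\prod_{i\in\alpha}a_{ii}^{-1}$, so all signs and hence the sepr-sequence are preserved. The only cosmetic difference is that the paper records the sign-preservation identity for arbitrary (not just principal) minors, which is not needed for the conclusion.
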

\begin{proof}
Let $B=\begin{bmatrix} b_{ij}\end{bmatrix}$ be the given matrix.  Let $D=\diag(d_1,\ldots, d_n)$ be the diagonal matrix with $d_i=\frac{1}{\sqrt{b_{ii}}}$.  The matrix $DBD$ is symmetric, nonnegative, and all diagonal entries are equal to one. Also, every minor of $DBD$ has the same sign as the corresponding minor in $B$ because
\[\det((DBD)[\alpha,\beta])=\det(B[\alpha,\beta])\prod_{i\in\alpha}d_i\prod_{j\in\beta}d_j.\qedhere\]  
\end{proof}

The next result follows from applying Observation~\ref{abuden} and considering the examples in \cite[Table 1]{Xavier17}.

\begin{prop}
For $2\times 2$ matrices, an sepr-sequence is attainable by a symmetric nonnegative matrix if and only if the sepr-sequence is attainable by a Hermitian matrix and it starts with $\Sp$, $\Ap$, or $\NN$.
\end{prop}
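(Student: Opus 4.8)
The plan is to prove both directions by combining the general restrictions on Hermitian sepr-sequences recalled in Proposition~\ref{prop:forbsepr} with the special constraints on nonnegative matrices noted just after it, and then to exhibit explicit $2\times 2$ symmetric nonnegative witnesses for each surviving sequence. The forward direction is essentially immediate: a symmetric nonnegative matrix is in particular Hermitian, so its sepr-sequence is attainable by a Hermitian matrix; and by Observation~\ref{abuden} the sequence must start with $\Ap$, $\NN$, or $\Sp$. So the only content is the reverse direction: given a length-2 sepr-sequence $t_1t_2$ attainable by some Hermitian matrix and starting with $\Sp$, $\Ap$, or $\NN$, I must produce a symmetric nonnegative $2\times 2$ realization.

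First I would enumerate, using \cite[Table 1]{Xavier17}, exactly which length-2 sepr-sequences are attainable by Hermitian matrices; this is a short finite list. Then I would discard those whose first term is not one of $\Sp$, $\Ap$, $\NN$. For the remaining candidates I would split on $t_1$. If $t_1=\Ap$, both diagonal entries are positive, so $t_2$ is the sign of $b_{11}b_{22}-b_{12}b_{21}=b_{11}b_{22}-b_{12}^2$ (using symmetry), and by scaling the off-diagonal entry $b_{12}\ge 0$ freely between $0$ and large values I can realize $t_2$ positive, zero, or negative with a nonnegative matrix, matching whichever of $\Ap,\Sp,\Sm,\Ap,\Ab,\NN$ etc.\ is on the Hermitian list. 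If $t_1=\NN$, both diagonal entries are zero, so $\det=-b_{12}^2\le 0$, giving $t_2=\NN$ (take $b_{12}=0$) or $t_2=\Am$ (take $b_{12}>0$); these are exactly the $t_1=\NN$ cases surviving from the Hermitian list, since $\NN\Ap$, $\NN\Ab$, $\NN\Sp$, $\NN\Sb$ are all forbidden for Hermitian matrices (and also, as the remark after Proposition~\ref{prop:forbsepr} notes, for nonnegative matrices). If $t_1=\Sp$, one diagonal entry is positive and one is zero, the determinant is $-b_{12}^2\le 0$, so $t_2\in\{\NN,\Sm\}$ (realized by $b_{12}=0$ and $b_{12}>0$ respectively); note $\Sp\Ap$ is forbidden for Hermitian matrices, consistent with this.

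In each branch I would simply write down the two or three explicit $2\times 2$ symmetric nonnegative matrices needed and check their sepr-sequences directly from the $2\times 2$ determinant formula, confirming the list of realizable sequences coincides with the Hermitian-attainable ones that start with $\Sp$, $\Ap$, or $\NN$. I do not anticipate a genuine obstacle here: the only thing to be careful about is bookkeeping---making sure that every Hermitian-attainable sequence beginning with an allowed letter actually does have a nonnegative realization, so that the characterization is an iff and not merely an inclusion. The mild subtlety is that a Hermitian realization of, say, $\Ap\Ab$ might use a negative off-diagonal entry, but since the sign of the $2\times 2$ determinant depends only on $b_{12}^2$ this causes no problem; symmetry plus the freedom in a single nonnegative off-diagonal parameter suffices to hit every allowed value of $t_2$.
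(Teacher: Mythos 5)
Your proposal is correct and follows essentially the same route as the paper: necessity from Observation~\ref{abuden} (a symmetric nonnegative matrix is in particular Hermitian), and sufficiency by enumerating the Hermitian-attainable $2\times 2$ sequences from \cite[Table 1]{Xavier17} (namely $\Ap\Ap,\Ap\Am,\Ap\NN,\NN\Am,\NN\NN,\Sp\Am,\Sp\NN$) and exhibiting symmetric nonnegative witnesses, which the paper does by simply noting the realizing matrices in \cite{Xavier17} are already nonnegative. One small slip to fix: a $2\times 2$ matrix has a single order-$2$ principal minor, so $t_2$ can only be $\Ap$, $\Am$, or $\NN$; thus in your $t_1=\Sp$ branch the sequences realized are $\Sp\NN$ and $\Sp\Am$ (not $\Sp\Sm$), and in the $t_1=\Ap$ branch the possible second terms are just $\Ap,\Am,\NN$, which is exactly what the enumerated list requires.
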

\begin{proof}
By Observation~\ref{abuden}, an sepr-sequence is attainable by a symmetric nonnegative matrix only if the sepr-sequence is attainable by a Hermitian matrix and it starts with $\Sp$, $\Ap$, or $\NN$.  According to \cite[Table 1]{Xavier17}, the possible such sepr-sequences  are \[\Ap\Ap,\Ap\Am,\Ap\NN,\NN\Am,\NN\NN,\Sp\Am,\Sp\NN.\]
It is easy to check that the matrices given in \cite{Xavier17} that realize these sepr-sequences are nonnegative, so the necessary condition is also sufficient.
\end{proof}

The possible sepr-sequences of $3\times 3$ symmetric nonnegative matrices are listed in Table \ref {tab:order3}. In Example~\ref{ex:nns3} we provide symmetric nonnegative matrices that attain some sepr-sequences  where the realization given in \cite[Table 2]{Xavier17} is not nonnegative.

\begin{table}[b!]
\caption{All possible sepr-sequences of
order $n=3$
that are attainable by symmetric nonnegative matrices.  In the source  column, `Original' means  the original  matrix in \cite[Table 2]{Xavier17} is nonnegative (and its name is given in the second column); `New' means the matrix listed in \cite[Table 2]{Xavier17} is not nonnegative but we constructed a nonnegative matrix (the name appears in the second column and the matrix is given in Example \ref{ex:nns3}). 
}\label{tab:order3}
\begin{center}{\small
\renewcommand{\arraystretch}{1.3} \begin{tabular}{|l|l|l|}\hline
Sepr-sequence\qquad$\null$  & nonnegative matrix & source  \\ 
\hline
$\Ap\Ab\Am$ & $N_{\Ap\Ab\Am}$ &
New  \\[0.3mm]

\hline
$\Ap\Ap\Ap$ & $I_3$   &  Original  \\

\hline
$\Ap\Ap\Am$ & $N_{\Ap\Ap\Am}$    &
New \\[0.3mm]

\hline
$\Ap\Ap\NN$ & $N_{\Ap\Ap\NN}$   &
New  \\[0.3mm]

\hline
$\Ap\Am\Ap$ & $M_{\Ap\Am\Ap}$    &
Original  \\[0.3mm]

\hline
$\Ap\Am\Am$ & $N_{\Ap\Am\Am}$    &
New   \\[0.3mm]

\hline
$\Ap\Am\NN$ & $M_{\Ap\Am\NN}$ &
Original  \\[0.3mm]


\hline
$\Ap\NN\NN$ & $J_3$    &  Original  \\

\hline
$\Ap\Sb\Am$ & $N_{\Ap\Sb\Am}$  &
New   \\[0.3mm]

\hline
$\Ap\Sp\Am$& $M_{\Ap\Sp\Am}$   &
Original  \\

\hline
$\Ap\Sp\NN$ & $J_1 \oplus J_2$   &  Original  \\

\hline
$\Ap\Sm\Am$ & $M_{\Ap\Sm\Am}$   &
Original   \\

\hline
$\Ap\Sm\NN$ & $M_{\Ap\Sm\NN}$   &
Original  \\

\hline
\end{tabular}
\hfill
\begin{tabular}{|l|l|l|}\hline
Sepr-sequence\qquad$\null$  & nonnegative matrix & source  \\ 
\hline
$\NN\Am\Ap$ & $J_3 - I_3$   &  Original  \\



\hline
$\NN\NN\NN$     & $O_3$    &  Original \\

\hline
$\NN\Sm\NN$ & $M_{\NN\Sm\NN}$    &
Original  \\

\hline
$\Sp\Ab\Am$   &  $M_{\Sp\Ab\Am}$  &
Original  \\

\hline
$\Sp\Am\Ap$     &  $M_{\Sp\Am\Ap}$   &
Original  \\

\hline
$\Sp\Am\Am$    &  $M_{\Sp\Am\Am}$   &
Original  \\

\hline
$\Sp\Am\NN$ & $M_{\tt S^+A^-N}$    &
Original  \\

\hline
$\Sp\NN\NN$ & $J_1 \oplus O_2$   &
Original  \\

\hline

$\Sp\Sb\Am$ & $N_{\Sp\Sb\Am}$    &
New  \\

\hline
$\Sp\Sp\NN$     & $M_{\Sp\Sp\NN}$    &
Original  \\

\hline
$\Sp\Sm\Am$    & $M_{\tt S^+S^-A^-}$   &
Original  \\

\hline
$\Sp\Sm\NN$     & $M_{\Sp\Sm\NN}$    &
Original  \\

\hline

\end{tabular}}
\end{center}
\end{table}

\begin{ex}\label{ex:nns3}
Each of the following $3\times 3$ symmetric nonnegative matrices achieves the sepr-sequences listed in its subscript.  

\[N_{\Ap\Ab\Am}=\mtx{
1&2&0\\
2&1&2\\
0&2&1
}\qquad
N_{\Ap\Ap\Am}=\mtx{
1&0.9&0\\
0.9&1&0.9\\
0&0.9&1}\qquad
N_{\Ap\Ap\NN}=\mtx{
1&\frac{3}{5}&0\\
\frac{3}{5}&1&\frac{4}{5}\\
0&\frac{4}{5}&1
}\]
\[N_{\Ap\Am\Am}=\mtx{
1 & 8 & 2 \\
8 & 1 & 2 \\
2 & 2 & 1 \\
}\qquad
N_{\Ap\Sb\Am}=\mtx{
1 & 1 & 0 \\
1 & 1 & 2 \\
0 & 2 & 1}\qquad
N_{\Sp\Sb\Am}=\mtx{
0 & 1 & 0 \\
1 & 1 & 0 \\
0 & 0 & 1  \\
}\]
\end{ex}

The only sepr-sequences listed in   \cite[Table 2]{Xavier17} that start with $\Ap$, $\Sp$, or $\NN$ that are not listed in  Table~\ref{tab:order3} are $\Ap\NN\Am$, $\NN\Am\Am$, and $\NN\Am\NN$.  Proposition \ref{PropStart}, which gives some restrictions on the sepr-sequence of a symmetric nonnegative matrix, shows these sepr-sequences cannot be attained by a symmetric nonnegative matrix. Thus,  Table~\ref{tab:order3} gives a complete list of all sepr-sequences that are attainable by a symmetric nonnegative matrix of order $3$.

\begin{prop}\label{PropStart}
Consider the sepr-sequence of a symmetric nonnegative matrix of order $\geq 3$.
\bit
\item If the sepr-sequence  starts with $\Ap\NN$, then it must be $\Ap\NN\OL{\NN} \NN$.
\item If the sepr-sequence  starts with $\NN\Am$, then it starts with $\NN\Am\Ap$.
\item If the sepr-sequence  starts with $\NN\Sm$, then the third term is either $\Sp$ or $\NN$; in the case of order $3$, it must be $\NN\Sm\NN$.
\eit\end{prop}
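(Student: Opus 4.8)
The plan is to treat the three bullets separately; in each, the hypotheses on $t_1$ and $t_2$, together with nonnegativity, pin down the matrix entries tightly enough that the remaining terms can be read off by hand. For the first bullet, given a symmetric nonnegative $B=[b_{ij}]$ of order $n\ge 3$ whose sepr-sequence begins $\Ap\NN$, I would first note that $t_1=\Ap$ gives $b_{ii}>0$ for all $i$, while $t_2=\NN$ forces every $2\times 2$ principal minor $b_{ii}b_{jj}-b_{ij}^2$ to vanish, so $b_{ij}=\sqrt{b_{ii}b_{jj}}$ for $i\ne j$ (nonnegativity fixing the sign). Setting $d=(\sqrt{b_{11}},\dots,\sqrt{b_{nn}})\trans$ one then sees $B=dd\trans$, so $B$ has rank at most $1$ and every principal minor of order at least $2$ vanishes; hence $\sepr(B)=\Ap\NN\OL{\NN}\NN$.

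For the second bullet, $t_1=\NN$ forces every diagonal entry to be zero, so each $2\times 2$ principal minor equals $-b_{ij}^2$; since $t_2=\Am$ makes each of these negative, $b_{ij}>0$ for all $i\ne j$. Every $3\times 3$ principal submatrix then has the form $\mtx{0&a&b\\a&0&c\\b&c&0}$ with $a,b,c>0$, whose determinant is $2abc>0$. So every $3\times 3$ principal minor is positive, $t_3=\Ap$, and $\sepr(B)$ begins $\NN\Am\Ap$.

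For the third bullet, $t_1=\NN$ again forces a zero diagonal, so each $2\times 2$ principal minor is $-b_{ij}^2\le 0$, and $t_2=\Sm$ then says precisely that some off-diagonal entry is $0$ and some is positive. The same determinant computation shows every $3\times 3$ principal minor equals $2abc\ge 0$, with equality exactly when one of the three relevant off-diagonal entries is $0$; thus no $3\times 3$ principal minor is negative, so $t_3\notin\{\Am,\Sb,\Ab\}$, while a pair $i\ne j$ with $b_{ij}=0$ together with any third index $k$ (available since $n\ge 3$) yields a zero $3\times 3$ principal minor, so $t_3\ne\Ap$. Hence $t_3\in\{\Sp,\NN\}$; and when $n=3$ there is a single $3\times 3$ principal minor $\det B=2b_{12}b_{13}b_{23}$, which is $0$ because some $b_{ij}=0$, forcing $t_3=\NN$ and $\sepr(B)=\NN\Sm\NN$. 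I do not expect any real obstacle: each step is a small determinant identity plus nonnegativity, and the only point needing care is translating statements such as ``every relevant minor is $\ge 0$ with at least one equal to $0$'' into the correct sepr symbol.
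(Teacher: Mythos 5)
Your argument is correct and follows essentially the same route as the paper: the paper likewise reads off the entries forced by $t_1t_2$ and uses the $2abc$ determinant computation for order-$3$ principal minors, the only cosmetic difference being that in the first bullet the paper scales the diagonal to ones (so $B$ becomes the all-ones matrix) whereas you write $B=dd\trans$ directly; both amount to the same rank-one observation. One tiny point to state explicitly in the third bullet: the fact that no order-$3$ principal minor is negative also rules out $t_3=\Sm$ (not just $\Am$, $\Ab$, $\Sb$), which is needed before concluding $t_3\in\{\Sp,\NN\}$.
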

\begin{proof}
If $B=\begin{bmatrix}b_{ij}\end{bmatrix}$ is a symmetric nonnegative matrix whose sepr-sequence starts with $\Ap\NN$, then we may assume all the diagonal entries are one by Proposition \ref{PropDiag}.  Since $t_2=\NN$, it follows that
\[b_{ii}b_{jj}-b_{ij}^2=0\]
for all $i,j$.  Since $b_{ii}=b_{jj}=1$ and $b_{ij}\geq 0$, we know $b_{ij}=1$ for any $i\neq j$.  Therefore, after scaling all diagonal entries to one, the new matrix (with the same sepr-sequence) is an all-ones matrix.  Consequently, the sepr-sequence is $\Ap\NN\OL{\NN} \NN$.  

If the sepr-sequence starts with $\NN\Am$, then all diagonal entries are zero and all off-diagonal entries are positive, so every order $3$ principal minor is positive.

Now let $B$ be a symmetric nonnegative matrix starting with $\NN\Sm$, so $B$ has zero diagonal.  Any order $3$ principal minor is either positive or zero: if the corresponding (doubly directed) digraph has a 3-cycle, i.e., if all off-diagonal entries are nonzero, then the determinant is always positive; if some off-diagonal entry is zero then the determinant is always zero.  Since the second term is $\Sm$, there must be a zero off-diagonal entry.  Therefore, at least one of the $3\times 3$ principal submatrices has a zero off-diagonal entry and so has determinant zero.  If one of the order $3$ principal minors has all off-diagonal entries  nonzero, then the third term is $\Sp$, otherwise it is $\NN$.
For order $3$, $t_3$ must be $\NN$ as $\Sp$ is not possible for the determinant.
\end{proof}

\begin{cor}
Consider the sepr-sequence $t_1t_2\cdots t_n$  of a symmetric nonnegative matrix with $t_1=\NN$ and $n\geq 3$.  Then the initial sequence $t_1t_2t_3$ must be one of  the following:
\[\NN\Am\Ap, \NN\Sm\NN, \NN\Sm\Sp, \NN\NN\NN.\]
\end{cor}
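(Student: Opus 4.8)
The plan is to obtain this as a short bookkeeping consequence of Proposition~\ref{PropStart} after a one-step case analysis on $t_2$. Let $B=[b_{ij}]$ be a symmetric nonnegative matrix with $t_1(B)=\NN$ and $n\geq 3$. Since $t_1(B)=\NN$, every diagonal entry of $B$ is zero. The first observation is that each $2\times 2$ principal minor of $B$ equals $b_{ii}b_{jj}-b_{ij}^2=-b_{ij}^2\le 0$, so $t_2(B)\in\{\NN,\Sm,\Am\}$; in particular $\Ap,\Sp,\Ab,\Sb$ are all impossible for $t_2$ here (which is also consistent with Observation~\ref{abuden} applied to the principal submatrices).

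Then I would split into the three surviving cases for $t_2(B)$. If $t_2(B)=\NN$, then $-b_{ij}^2=0$ for all $i\ne j$, hence $b_{ij}=0$ for all $i\ne j$; together with the zero diagonal this forces $B=O$, so $\sepr(B)=\NN\NN\OL{\NN}$ and in particular $t_1t_2t_3=\NN\NN\NN$. If $t_2(B)=\Am$, the second bullet of Proposition~\ref{PropStart} gives that the sequence starts with $\NN\Am\Ap$. If $t_2(B)=\Sm$, the third bullet of Proposition~\ref{PropStart} gives that $t_3\in\{\Sp,\NN\}$, i.e.\ $t_1t_2t_3\in\{\NN\Sm\Sp,\NN\Sm\NN\}$. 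Collecting the outcomes $\NN\Am\Ap$, $\NN\Sm\NN$, $\NN\Sm\Sp$, $\NN\NN\NN$ finishes the argument.

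I do not expect a genuine obstacle: the only place requiring a moment's care is the case $t_2(B)=\NN$, where one must notice that a symmetric nonnegative matrix with zero diagonal and all $2\times 2$ principal minors equal to zero is necessarily the zero matrix. (For $n=3$ the triple $\NN\Sm\Sp$ does not actually occur, by the parenthetical restriction in Proposition~\ref{PropStart}, but this is harmless: the corollary asserts only a necessary condition on $t_1t_2t_3$, so listing $\NN\Sm\Sp$ among the allowed initial triples is still correct.)
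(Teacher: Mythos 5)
Your proof is correct and follows essentially the same route as the paper: a case split on $t_2$, with the two nontrivial cases ($\NN\Am$ and $\NN\Sm$) dispatched by Proposition~\ref{PropStart}. The only difference is cosmetic — you rule out $t_2\in\{\Ap,\Ab,\Sp,\Sb\}$ and settle the $t_2=\NN$ case by direct computation with the zero diagonal, where the paper instead cites Proposition~\ref{prop:forbsepr} (including the $\NN\NN$ Theorem); both are valid.
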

\begin{proof}
By Proposition~\ref{prop:forbsepr}, $t_2\notin\{\Ab,\Ap,\Sb,\Sp\}$, so $t_2\in\{\Am,\Sm,\NN\}$.  If $t_2=\NN$, then $t_k=\NN$ for all $k$ by Proposition~\ref{prop:forbsepr}.  The other cases follow from Proposition~\ref{PropStart}.
\end{proof}


\subsection{Sepr-sequences for symmetric nonnegative sign patterns}\label{ss:nnpat}

An $n\x n$ sign pattern $\sP=[p_{ij}]$ is {\em symmetric}  if $p_{ij}=p_{ji}$ for all $i,j=1,\dots,n$. In the spirit of Section \ref{s:unique}, we consider  the possible unique sepr-sequences of symmetric nonnegative sign patterns (without assuming the matrices described by $\sP$ are symmetric).     
We continue to use digraph terminology, because we need to track the presence or absence of loops, but note that the digraph of a symmetric pattern is necessarily doubly directed. For nonnegative patterns, we need not formally sign the digraph, because all arcs represent  positive entries.
Let $K_n$ be the loopless doubly directed graph whose underlying graph is a complete graph on $n$ vertices.  Let $\Kl_n$ be obtained from $K_n$ by adding a loop.    For $n\ge 3$,  the {\em leaf-loop-star}, denoted by $S_n^{(\ell)}$, is the doubly directed digraph whose underlying graph is a star of order $n$, and there is a loop on every vertex except the center vertex; $S_6^{(\ell)}$ is shown in Figure \ref{fig:leaf-loop-star}.  For any digraph $\Gamma$, $k\Gamma$ denotes the disjoint union of  $k$ copies of $\Gamma$.

\begin{figure}[h!] \begin{center}
\scalebox{.4}{\includegraphics{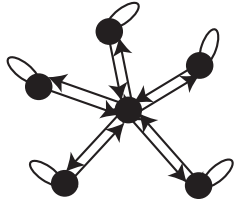}}
\caption{\label{fig:leaf-loop-star} The leaf-loop-star $S_6^{(\ell)}$.  
     \vspace{-5pt}}
\end{center} \vspace{-10pt}
\end{figure}

\begin{rem}\label{obs:kn} 
For $n\geq 4$, the digraph $K_n$ contains at least two composite cycles:  One is an $n$-cycle, and the other is an $(n-2)$-cycle along with a $2$-cycle.  Let $\sP$ be a symmetric nonnegative sign pattern.    If $\Gamma(\sP)=K_n$ with $n\geq 4$, then the signed cycle products of these two composite cycles are   of opposite sign, so $\sP$ is ambiguous.  Similarly, if $\Gamma(\sP)=\Kl_n$ with $n\geq 3$, then $\sP$ is ambiguous.
\end{rem}

The next theorem lists the ten possible initial pairs  $t_1t_2$ that can be attained by $\sepr(\sP)$ for a symmetric nonnegative sign pattern with unique sepr-sequence.  For seven of the ten initial pairs, there is only one sepr-sequence for each order, and this sequence is determined.  After the theorem, Example~\ref{ex:smalldigraphs} is presented to show that each of the three remaining initial pairs has at least two possible sepr-sequences of order four.

\begin{thm}\label{thm:symposunique} Suppose $n\ge 2$ and $t_1\cdots t_n$ is the unique sepr-sequence of a symmetric nonnegative sign pattern (symmetry of the matrices is not assumed).  Then \[t_1t_2\in\{\Ap\Ap,  \NN\Am,\NN\NN, \NN\Sm,  \Sp\Am, \Sp\Ab, \Sp\NN, \Sp\Sp, \Sp\Sm, \Sp\Sb\}.\] 
Furthermore: \ben[$(1)$]
\item\label{A+} If $t_1=\Ap$, then $\Gamma(\sP)=n\Pl_1$ and  $t_1\cdots t_n=\Ap\OL{\Ap}$.
\item\label{NA-} If $t_1t_2=\NN\Am$, then  $n\le 3$ and:
\bit
\item $\Gamma(\sP)=K_2$ and $t_1t_2 =\NN\Am$, or 
\item $\Gamma(\sP)=K_3$ and $t_1t_2 t_3=\NN\Am{\Ap}$.
\eit
\item\label{NN} If $t_1t_2=\NN\NN$, then $\Gamma(\sP)=nP_1$ and $t_1\cdots t_n=\NN\NN\OL{\NN}$.
\item\label{S+A*} If $t_1t_2=\Sp\Ab$, then $\Gamma(\sP)=S_n^{(\ell)}$ and  $t_1\cdots t_n=\Sp\OL{\Ab}\Am$.
\item\label{S+A-} If $t_1t_2=\Sp\Am$, then $n=2$, $\Gamma(\sP)=\Kl_2$, and $t_1t_2 =\Sp\Am$.
\item\label{S+N} If $t_1t_2=\Sp\NN$, then $\Gamma(\sP)=\Pl_1\dunion (n-1)P_1$ and $t_1\cdots t_n=\Sp\NN\OL{\NN}$.
\item\label{S+S+} If $t_1t_2=\Sp\Sp$, then $\Gamma(\sP)=k\Pl_1\dunion (n-k)P_1$ for $2\le k \le n-1$ and $t_1\cdots t_n=\Sp\Sp\OL{\Sp}\OL{\NN}\NN$, where $t_k=\Sp$ and $t_{k+1}=\NN$.
\een  
\end{thm}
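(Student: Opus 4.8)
The plan is to analyze, for each candidate initial pair $t_1t_2$, exactly which digraphs $\Gamma(\sP)$ are compatible with a symmetric nonnegative sign pattern having a unique sepr-sequence, and then read off the sepr-sequence from the cycle structure. The governing principle throughout is that for a symmetric nonnegative pattern every arc carries a $+$ sign, so a composite cycle of order $k$ using $c$ cycles of length $\ge 2$ has signed cycle product $(-)^{k+c}$ (each loop contributes $+$, each $j$-cycle with $j\ge 2$ contributes a signed product of sign $(-)^{j+1}$, and the sign of a composite cycle is the product of these). In particular, a $2$-cycle contributes $-$ and a loop contributes $+$, so any two composite cycles on the same vertex set that differ in the parity of (number of cycles of length $\ge 2$) force an ambiguous subpattern, and hence (by Remark~\ref{rem:Sstar}) either $t_k(\sP)=\Sb$ or, if that is incompatible with uniqueness of other terms, a contradiction. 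First I would record, as a preliminary reduction, that by Remark~\ref{rem:simp} and property \eqref{ssc0}-type reasoning we may work with a simplified pattern, and that any two loops and one $2$-cycle, or a $P_3$ with a loop on an endpoint, or a $K_3$-with-a-loop, etc., already create ambiguous $2\times2$ or $3\times3$ principal subpatterns — so the digraph is extremely constrained. The key structural lemma to extract is: if $\sP$ is a symmetric nonnegative pattern with unique sepr-sequence, then $\Gamma(\sP)$ has no cycle of length $\ge 3$ unless its underlying graph is a clique (because a $3$-cycle plus the $2$-cycles inside it on the same three vertices give opposite signs), and even a triangle is only permitted when $n=3$ by Remark~\ref{obs:kn}; moreover loops are heavily restricted by the $2\times2$ analysis ($p_{ii}p_{jj}-p_{ij}p_{ji}$ is ambiguous as soon as we have two loops joined by a $2$-cycle).

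With that reduction, the ten-case classification of $t_1t_2$ proceeds by elimination. Since $\sP$ is nonnegative, Observation~\ref{abuden} (applied to matrices, but the argument — a nonnegative $1\times1$ minor — is purely about signs) gives $t_1\in\{\Ap,\NN,\Sp\}$. If $t_1=\Ap$ then every diagonal entry is $+$; a single $2$-cycle then makes a $2\times2$ principal subpattern $\mtx{+&+\\+&+}$ ambiguous, so there are no $2$-cycles at all, hence $\Gamma(\sP)=n\Pl_1$, giving $t_1\cdots t_n=\Ap\OL{\Ap}$; this also shows $t_2=\Ap$, ruling out $\Ap$ paired with anything else. If $t_1=\NN$ all loops are absent; then $t_2\in\{\NN,\Am,\Sm\}$ because a nonloop $2$-cycle gives a $2\times2$ principal minor that is $-$ (entry $\mtx{0&+\\+&0}$ has determinant $-$), so $t_2$ can never be $+$-flavored; and $\Ab$ is impossible since all relevant $2\times2$ determinants have the same sign. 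If $t_1=\Sp$ there is at least one loop and at least one isolated-from-loops vertex (to get a zero $1\times1$ minor), and then $t_2$ can be any of $\Am,\Ab,\NN,\Sp,\Sm,\Sb$ depending on whether $2$-cycles are present and how they interact with the loops; the nonnegativity forbids $\Ap$ for $t_2$ by the same $2\times2$ computation. This yields precisely the listed ten pairs.

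For the case-by-case determination of the full sequence: For \eqref{NA-}, $t_1t_2=\NN\Am$ forces zero diagonal and, since $t_2=\Am$, every $2\times2$ principal minor is $-$, i.e. every pair of vertices spans a $2$-cycle, so the underlying graph is $K_n$; by Remark~\ref{obs:kn} this forces $n\le 3$, and $n=3$ gives the all-positive-off-diagonal symmetric pattern $K_3$ with $t_3=\Ap$ (the $3\times3$ determinant $2p_{12}p_{23}p_{31}>0$). For \eqref{NN}, $t_1t_2=\NN\NN$ forces zero diagonal and no $2$-cycles, so $\Gamma(\sP)=nP_1$ and the sequence is all $\NN$. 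For \eqref{S+N}, $t_1t_2=\Sp\NN$ forces (at least one loop, at least one non-loop diagonal) but $t_2=\NN$ kills every $2$-cycle and forces at most one loop (two loops would give a $2\times2$ minor that is $+$), so $\Gamma(\sP)=\Pl_1\dunion(n-1)P_1$ and the rest is $\NN$. For \eqref{S+S+}, $t_1t_2=\Sp\Sp$ with no $2$-cycles allowed (a $2$-cycle plus loops would create ambiguity or a $-$ term, breaking $t_2=\Sp$) forces $\Gamma(\sP)=k\Pl_1\dunion(n-k)P_1$ with $2\le k\le n-1$, and then $t_j$ equals $\Sp$ for $j\le k$ and $\NN$ for $j>k$ directly from counting how many loops a $j$-subset can contain. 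For \eqref{S+A-}, $t_1t_2=\Sp\Am$ forces $n=2$ and $\Gamma(\sP)=\Kl_2$ after checking that a $2$-cycle joining two loops is ambiguous while a $2$-cycle with exactly one loop plus the necessary zero diagonal gives the pattern $\mtx{+&+\\+&0}$ with $\SEPR=\{\Sp\Am\}$ — and a direct check that adding any third vertex breaks uniqueness. The genuinely most delicate case is \eqref{S+A*}, $t_1t_2=\Sp\Ab$: here the leaf-loop-star $S_n^{(\ell)}$ must be identified as the unique digraph, and the hard part is showing it is \emph{forced} — one must argue that $t_2=\Ab$ (every $2\times2$ principal minor nonzero, both signs occurring) together with $t_1=\Sp$ pins down that the non-loop edges form a star centered at the unique loopless vertex, using that two loopless vertices cannot be adjacent (that edge's $2$-cycle would give a $+$ $2\times2$ minor, impossible for nonnegative; actually $\mtx{0&+\\+&0}$ is $-$, so one must instead rule it out via a different $2\times2$ comparison or via a $3\times3$ ambiguity with a nearby loop) and that two loop vertices cannot be adjacent (ambiguous $2\times2$), hence the loop vertices form an independent set all attached only to loopless vertices, and minimality/connectivity of the simplified pattern plus $t_2=\Ab$ (which needs a $-$-signed $2\times2$, forcing at least one loopless–loop edge, and a $+$-signed $2\times2$, forcing... ) collapses everything to a single center. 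I expect this forcing argument for \eqref{S+A*}, together with verifying $\sepr(S_n^{(\ell)})=\Sp\OL{\Ab}\Am$ by the composite-cycle count (a $k$-subset containing the center has signed determinant $(-)$ times a product, those avoiding the center are all-loop hence $+$, so both signs occur for $2\le k\le n-1$ and only one sign for $k=n$), to be the main obstacle; the remaining cases are routine $2\times2$ and $3\times3$ sign computations organized around the loop/$2$-cycle parity principle.
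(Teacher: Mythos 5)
Your overall strategy --- case analysis on $t_1\in\{\Ap,\NN,\Sp\}$ forced by nonnegativity, followed by $2\times 2$ principal-minor analysis and ambiguity arguments via Remark~\ref{rem:Sstar} and Remark~\ref{obs:kn} --- is the same as the paper's, and your handling of cases \eqref{A+}, \eqref{NA-}, \eqref{NN}, \eqref{S+A-}, \eqref{S+N}, and \eqref{S+S+} is essentially correct (in case \eqref{A+} you should note that no $2\times 2$ principal subpattern of an all-positive-diagonal nonnegative pattern can have signed determinant $-$, so an ambiguous block genuinely contradicts uniqueness, or simply argue, as the paper does, that a $2$-cycle together with the loops makes $\det\sP$ itself ambiguous, contradicting the signed determinant that uniqueness forces). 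The genuine gap is exactly where you flag it: case \eqref{S+A*}. You never carry out the forcing argument, and the sketch you give heads the wrong way: you try to show that two loopless vertices cannot be adjacent, but $t_2(\sP)=\Ab$ forces the opposite. Since uniqueness fixes $t_2$, every $2\times 2$ principal subpattern must have a nonzero signed determinant (an ambiguous one would give $t_2=\Sb$ by Remark~\ref{rem:Sstar}, a combinatorially zero one would put a zero minor in every realization). With $\alpha$ the loop vertices and $\beta$ the loopless ones this pins the pattern completely: $p_{ij}=0$ for $i,j\in\alpha$ (else $\mtx{+&+\\+&+}$ is ambiguous), while $p_{ij}=+$ for every other pair, including $i,j\in\beta$ (else the determinant is a signed $0$). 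So the loopless vertices form a clique completely joined to the loop vertices --- not an independent-set configuration.

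The step you leave dangling (``collapses everything to a single center'') is then the heart of the case: one must show $|\beta|=1$. The paper's argument is that if $|\beta|\ge 2$, then adding any one loop vertex $v\in\alpha$ to $\beta$ gives $\Gamma(\sP[\beta\cup\{v\}])=\Kl_m$ with $m\ge 3$, which is ambiguous by Remark~\ref{obs:kn}; multiplying the two oppositely signed composite cycles of $\sP[\beta\cup\{v\}]$ by the positive loops on the remaining vertices makes $\det\sP$ itself ambiguous, contradicting the fact that a unique sepr-sequence forces $\sP$ to have a signed determinant (otherwise $t_n$ could be $\Ap$, $\Am$, or $\NN$). Hence $|\beta|=1$ and $\Gamma(\sP)=S_n^{(\ell)}$; your verification that $\sepr\bigl(S_n^{(\ell)}\bigr)=\Sp\OL{\Ab}\Am$ is fine. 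Without this $|\beta|=1$ argument (and with the incorrect non-adjacency claim in its place), case \eqref{S+A*} --- which you yourself identify as the main obstacle --- is not proved, so the proposal as written is incomplete. A similar but smaller remark applies to case \eqref{S+A-}: your ``direct check that adding any third vertex breaks uniqueness'' should be replaced by the observation that $t_2=\Am$ forces $\Gamma(\sP)=\Kl_n$, which is ambiguous for $n\ge 3$ by Remark~\ref{obs:kn}.
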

\bpf Let $\sP=\begin{bmatrix} p_{ij}\end{bmatrix}$.  Since $\sP$ has a unique sepr-sequence, $\sP$ must have a signed determinant. We consider the three choices $\Ap, \NN,$ and $\Sp$ for $t_1$.  

\smallskip

\noi \textbf{Case 1:} $t_1=\Ap$. 

\eqref{A+}:  Every diagonal entry is positive.  Since  $\sP$ has a signed determinant,   there are no nonzero off-diagonal entries in $\sP$, or equivalently $\Gamma(\sP)$ has no arcs except loops, and every vertex has a loop.

\smallskip

\noi \textbf{Case 2:} $t_1=\NN$.  In this case, every diagonal entry is zero,  so it is not possible to have a positive minor of order two.  Thus $t_2\in\{\Am,\NN,\Sm\}$.

\eqref{NA-}: Since every diagonal entry is zero and every order two principal minor is negative, every off-diagonal entry is positive and  $n\le 3$ by Remark~\ref{obs:kn}.

\eqref{NN}: Every diagonal entry is zero  and every order two sign pattern has zero determinant.  This implies  $\sP$ is the all-zeros sign pattern, or equivalently $\Gamma(\sP)$ has no arcs at all.

\smallskip

\noi \textbf{Case 3:} $t_1=\Sp$.  There is at least one positive  diagonal entry and at least one zero diagonal entry; the zero diagonal entry implies $t_2\ne\Ap$.  Let $\alpha=\{i\in [n]: p_{ii}>0\}$ and $\beta=\{i\in [n]: p_{ii}=0\}$.  Also let $k=|\alpha|$ and $n-k=|\beta|$ with $1<k<n$.

\eqref{S+A*}: Since $t_2(\sP)=\Ab$, every order two principal subpattern has signed determinant equal to $+$ or $-$.  This means $p_{ij}=0$ if $i,j\in\alpha$ for otherwise $\det \sP[\{i,j\}]$ is ambiguous; $p_{ij}=+$ if $i\in\alpha$ and $j\in\beta$ for otherwise $\det \sP[\{i,j\}]=0$; similarly, $p_{ij}=+$ if $i,j\in\beta$ and $i\neq j$ for  otherwise $\det \sP[\{i,j\}]=0$. 

We show that $|\beta|=1$.  If $|\beta|\geq 2$, pick any $v\in\alpha$ and let $\beta'=\beta\cup\{v\}$.  Thus $\Gamma(\sP[\beta'])=\Kl_{|\beta'|}$ with $|\beta'|\geq 3$, and $\sP[\beta']$ is ambiguous by Remark~\ref{obs:kn}.  Consequently, $\sP[\beta']$ along with the positive diagonal entries in $\sP(\beta')$ makes $\sP$ ambiguous, which is a contradiction.  Therefore, $|\beta|=1$ and $\Gamma(\sP)=S_n^{(\ell)}$. 

\eqref{S+A-}: Since every order two principal minor is negative, at most one diagonal entry is positive, and exactly one must be positive since $t_1=\Sp$.   Therefore, every off-diagonal entry is positive and $\Gamma(\sP)=\Kl_n$.  By Remark~\ref{obs:kn} and the assumption that $n\geq 2$, it follows that $n=2$ and $\Gamma(\sP)=\Kl_2$.

\eqref{S+N}: There is at least one positive  diagonal entry and at least one zero diagonal entry.  Since every order two sign pattern has zero determinant,  there are no nonzero off-diagonal entries in $\sP$  and only one loop.

\eqref{S+S+}:  Since there is a positive order 2 principal minor, there are at least two positive  diagonal entries.  Since no order two sign pattern has negative determinant,  there are no nonzero off-diagonal entries in $\sP$.
\epf

\begin{ex}\label{ex:smalldigraphs} 
Let $\sP$ be a symmetric nonnegative sign pattern.  For $i=0,1,2$ the digraph $\Gamma_i$ is shown in Figure \ref{fig:smalldigraphs}.

For $\Gamma(\sP)=P_4$, $\sepr(\sP)=\NN\Sm\NN\Ap$.  
If $\Gamma(\sP)=\Gamma_0$, then $\sepr(\sP)=\NN\Sm\Sp\Ap$.  

If $\Gamma(\sP)=\Gamma_1$, then $\sepr(\sP)=\Sp\Sb\Sm\Ap$.  If $\Gamma(\sP)=\Gamma_2$, then $\sepr(\sP)=\Sp\Sb\Sm\Am$. 

For $\Gamma(\sP)=\Pl_4$, $\sepr(\sP)=\Sp\Sm\Sm\Ap$.  If $\Gamma(\sP)=\Sl_4$, then $\sepr(\sP)=\Sp\Sm\NN\NN$.

\end{ex}

\begin{figure}[h!] \begin{center}
\scalebox{.4}{\includegraphics{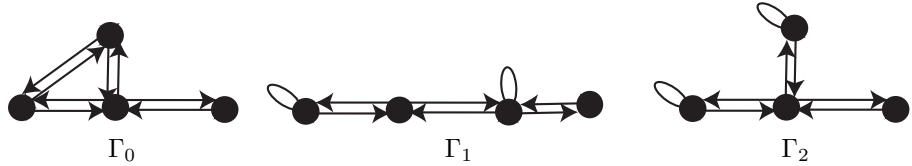}}\\
$\Gamma_0$\hspace{4.1cm}$\Gamma_1$\hspace{4.1cm}$\Gamma_2$
\caption{\label{fig:smalldigraphs} The digraphs for Example \ref{ex:smalldigraphs}  
     \vspace{-5pt}}
\end{center} \vspace{-10pt}
\end{figure}


 \section{Concluding remarks} \label{s:conclude}

We extended the definition of sepr-sequences to sign patterns, and established numerous results about sign patterns that have unique sepr-sequences and about sign semi-stable patterns.  However,  the full characterization of sign patterns with unique sepr-sequences (Conjecture \ref{uniqueconj}) remains open for orders $\geq 5$.  Another type of sign pattern that may be interesting to study is a {\em spectrally arbitrary} sign pattern, that is, one that allows any possible (complex) spectrum attainable by a real matrix.

We made a preliminary study of sepr-sequences of symmetric nonnegative matrices and symmetric nonnegative sign patterns.  Numerous interesting avenues for investigation remain.  For matrices, these include additional forbidden subsequences, families of sepr-sequences that can be realized, and a study of sepr-sequences via adjacency matrices of graphs.  

For seven initial sequences, we determined the one possible sepr-sequence for a symmetric nonnegative sign pattern of order $n$  that has a unique sepr-sequence, but the possible sepr-sequences for three initial starting sequences remain open (in each case there is more than one).  It  would also be interesting to investigate properties of other symmetric sign patterns that have unique sepr-sequences, or more generally find other structural properties that guarantee unique sepr-sequences.  

\section{Acknowledgments}

We thank PIMS for supporting a visit to the University of Victoria by L.H. where this research was initiated.



\end{document}